\def\@settitle{\begin{center}%
  \baselineskip14\p@\relax
  \bfseries
  \uppercasenonmath\@title
  \@title
  \ifx\@subtitle\@empty\else
     \\[1ex]\uppercasenonmath\@subtitle
     \footnotesize\mdseries\@subtitle
  \fi
  \end{center}%
}
\def\subtitle#1{\gdef\@subtitle{#1}}
\def\@subtitle{}
\definecolor{red}{rgb}{0.8,0,0}
\definecolor{darkorange}{rgb}{1,0.4,0}
\definecolor{lightorange}{rgb}{1,0.6, 0}
\definecolor{yellow}{rgb}{1,0.8, 0}
\newtheorem{theorem}{Theorem}
\newtheorem{remark}{Remark}
\newtheorem{definition}{Definition}
\newtheorem{example}{Example}
\newcommand\tr{\operatorname{tr}}
\newcommand\inc{\operatorname{inc}}
\newcommand\skw{\operatorname{skw}}
\newcommand\vskw{\operatorname{vskw}}
\newcommand\mskw{\operatorname{mskw}}
\newcommand\sym{\operatorname{sym}}
\newcommand\grad{\operatorname{grad}}
\newcommand\deff{\operatorname{def}}
\renewcommand\div{\operatorname{div}}
\renewcommand\ker{{\operatorname{ker}}}
\newcommand\curl{\operatorname{curl}}
\newcommand\rot{\operatorname{rot}}
\newcommand\dev{\operatorname{dev}}
\newcommand\alt{\mathrm{Alt}}
\newcommand\hess{\operatorname{hess}}
\newcommand\ran{\mathrm{Ran}}
\newcommand\R{\mathbb{R}}
\newcommand\Alt{{\mathrm{Alt}}}
\newcommand{\bs}{{\scriptscriptstyle \bullet}}
\renewcommand{\vec}{\vectorsym}
\newcommand{\ten}{\tensorsym}
\newcounter{quotecount}
\begin{document}
\title{Many facets of cohomology}
\subtitle{Differential complexes and structure-aware formulations}

 \author{Kaibo Hu}
 \address{Mathematical Institute, University of Oxford,
Radcliffe Observatory, Andrew Wiles Building,
Woodstock Rd, Oxford OX2 6GG, UK.}
\email{kaibo.hu@maths.ox.ac.uk}
\date{\today. }

\maketitle

\begin{abstract}


Complexes and cohomology, traditionally central to topology, have emerged as fundamental tools across applied mathematics and the sciences. This survey explores their roles in diverse areas, from partial differential equations and continuum mechanics to reformulations of the Einstein equations and network theory. Motivated by advances in compatible and structure-preserving discretisation such as Finite Element Exterior Calculus (FEEC), we examine how differential complexes encode critical properties such as existence, uniqueness, stability and rigidity of solutions to differential equations. We demonstrate that various fundamental concepts and models in solid and fluid mechanics are essentially formulated in terms of differential complexes.   

\end{abstract}

\tableofcontents

\section{Complexes and cohomology: a perspective from solving equations}

A basic question in the sciences is to clarify the relationships among physical observables. Newton's law
\begin{equation}\label{eqn:newton}
\bm f = m \bm a = m \ddot{\bm x},
\end{equation}
indicates that the force exerted on a body is proportional to its acceleration. The (incompressible) Navier--Stokes equations
\begin{subeqnarray}\label{NS}
\bm u_{t} + (\bm u \cdot \nabla)\, \bm u - R_{e}^{-1}\Delta \bm u + \nabla p &=& \bm f, \\
\nabla \cdot \bm u &=& 0,
\end{subeqnarray}
indicate a relationship between the velocity $\bm u$, the pressure $p$, and the external force $\bm f$ on a flow. This is a nonlinear differential and algebraic relationship. Solving differential equations is fundamental: 
Newton’s famous statement, often paraphrased as ``The laws of nature are expressed by differential equations'' \cite{arnold1992ordinary}, emphasises their fundamental roles.

 Both \eqref{eqn:newton} and \eqref{NS} involve continuous space and time. Through the developments from Newton and Leibniz to Cauchy, Bolzano, and Weierstrass, \emph{calculus} established rigorous concepts of limits and convergence of discrete objects (such as values on a lattice) to continuous counterparts (such as functions) via infinitesimal calculus. Since then, the interaction between discrete and continuous has been a central topic in mathematics and physics \cite{bell2019continuous}.

Starting in the 20th century, a reverse trend of translating continuous theories back to discrete ones emerged. Quantum theory and numerical computation have been two major motivations: the former aims to quantise continuous theories such as general relativity, and the latter aims to reduce continuous problems to a finite number of degrees of freedom to fit them into computer simulations. Both directions raised the fundamental question of finding discrete counterparts to continuous systems. A natural idea would be to revisit the invention of calculus and replace derivatives and limits with finite differences. Indeed, this led to the rise of finite difference methods, playing a crucial role from the early days of von Neumann’s applications \cite{vonneumann1950method} to modern computational fluid dynamics and numerous other areas. 

\medskip
{\noindent\bf Compatible discretisation.} Numerical methods may face subtle and serious issues. Puzzling examples come from computational electromagnetism and fluid dynamics. Extending finite element or finite difference schemes (that are standard for scalar problems) to solving the Maxwell equations or the Navier--Stokes equations can lead to spurious (wrong) numerical solutions. These issues reflect a fundamental question: 
\begin{quote}
\emph{how to discretise a system with multiple variables?}
\end{quote}
The problem of discretising a multi-field system fits into the broader topic of \emph{compatible discretisation}. 





The examples of discretising Maxwell’s equations and the Navier--Stokes equations show that discretisations of different variables must satisfy certain conditions, rather than being chosen arbitrarily (such as replacing all derivatives by standard finite differences). Consequently, a long line of research has sought to identify such conditions and design \emph{discrete patterns} (proper staggered grids or finite elements -- for the latter, how to distribute degrees of freedom over a triangulation and how to construct local shape functions to match those degrees of freedom) for different variables and problems.

The principle of identifying such compatibility conditions for multi-variable systems has been established in what is known as the Ladyzhenskaya-Babu\v{s}ka-Brezzi (LBB) condition. Different problems have different versions of the LBB condition, typically involving conditions on differential operators connecting the spaces of different variables. For incompressible fluids, for example, the velocity space must map \emph{onto} the pressure space via the $\div$ operator. Such relations are summarised in diagrams and complexes like
\begin{equation}\label{deRham}
\begin{tikzcd}[row sep=tiny, column sep = large]
0 \arrow{r} & V^{0}  \arrow{r}{\grad} & V^{1}  \arrow{r}{\curl} & V^{2}  \arrow{r}{\div} & V^{3}  \arrow{r}{} & 0\\
&&&\mbox{velocity}&\mbox{pressure}&\\\vspace{-0.2cm}
&&\mbox{electric field}&\mbox{magnetic field}&&
\end{tikzcd}
\end{equation}
Recall that a sequence of spaces $V^{k}, k=0, 1, \cdots$ and operators $d^{k}: V^{k}\to V^{k+1}, k=0, 1, \cdots$ is called a {\it complex} if $d^{k+1}\circ d^{k}=0$, for any $k$. The $k$-th cohomology is defined as the quotient space $\mathscr{H}^{k}:=\ker (d^{k})/\ran(d^{k-1})$. Roughly speaking, the $k$-th cohomology includes those fields fields $u\in V^{k}$ satisfying $d^{k}u=0$ but not in the range of the previous operator $d^{k-1}V^{k-1}$\footnote{For example, \eqref{deRham} is a complex since $\curl\circ \grad=0$ and $\div\circ \curl =0$. The cohomology at $V^{0}$ can be represented by the kernel of $\grad$; the cohomology at $V^{1}$ can be represented by $\curl$-free fields which cannot be written as a gradient; the cohomology at $V^{2}$ is represented by $\div$-free fields that cannot be written as a curl; and the cohomology at $V^{3}$ is represented by those elements in $V^{3}$ that are not in the range $\div V^{2}$}. 

In fact, \emph{differential complexes} (sequences of spaces connected by differential operators) such as \eqref{deRham} and the homological algebra for them not only hold for special cases, but reflects rather general principles for solving equations, such as existence, uniqueness and stability of solutions.
\begin{tcolorbox}[
    enhanced, breakable,
    toggle enlargement=evenpage,
    colback=red!5!white,colframe=red!75!black,
    title={Homological algebra for solving equations}]
Natural mathematical questions arising from solving equations are related to the \emph{existence}, \emph{uniqueness}, and \emph{stability} of solutions. We can illustrate these concepts somewhat formally using \emph{diagrams}. Suppose we want to solve
\begin{equation}\label{eqn:general}
\mathscr{L}(w) = f,
\end{equation}
where $f$ is an external force and $w$ is the variable to solve for. Note that $w$ may be a combination of several physical quantities. For example, in \eqref{eqn:newton}, given the mass $m$ and the external force $\bm f$, one solves for the position $w := \bm x$, where $\mathscr{L} := \frac{d^{2}}{dt^{2}}$ is the second order derivative; in the Navier--Stokes equations \eqref{NS}, given the force $\bm f$, one solves for velocity and pressure $w := (\bm u, p)$, and the (nonlinear) operator $\mathscr{L}$ is defined by those equations, mapping $(\bm u, p)$ to $\bm f$.

Solving the equation \eqref{eqn:general} can be stated as: given $f$ in a data space $G$, does there exist a solution $w$ in a solution space $W$? Then \emph{existence} is equivalent to whether the map $\mathscr{L} : W \to G$ is surjective. In diagram form, we say
\[
\begin{tikzcd}
W \arrow{r}{\mathscr{L}} & G \arrow{r}{} & 0
\end{tikzcd}
\]
is \emph{exact} at $G$, meaning that any element in $G$ mapped to zero must come from $\mathscr{L}$ of some element in $W$. By definition of the two arrows, exactness is precisely the surjectivity of $\mathscr{L}$, thus the existence of solutions to \eqref{eqn:general} (for general right hand sides). \emph{Uniqueness} (injectivity of $\mathscr{L}$) is similarly equivalent to the exactness of
\[
\begin{tikzcd}
0 \arrow{r}{} & W \arrow{r}{\mathscr{L}} & G,
\end{tikzcd}
\]
i.e., anything in $W$ mapped to $0$ by $\mathscr{L}$ must be the zero element in $W$. In summary, the existence and uniqueness of \eqref{eqn:general} boil down to the exactness at both $W$ and $G$ of
\[
\begin{tikzcd}
0 \arrow{r}{} & W \arrow{r}{\mathscr{L}} & G \arrow{r}{} & 0.
\end{tikzcd}
\]
If the uniqueness does not hold, we may talk about \emph{rigidity}:
\[
\begin{tikzcd}
0 \arrow{r}{} & V \arrow{r}{\mathscr{T}} & W \arrow{r}{\mathscr{L}} &  G \arrow{r}{} & 0.
\end{tikzcd}
\]
Here we add another space $V$, and elements in $V$ are mapped into $W$ by $\mathscr{T}$. Relevant to our problem, we require that the image of $\mathscr{T}$ is mapped to zero by $\mathscr{L}$, i.e., $\mathscr{L} \circ \mathscr{T} = 0$. Then the nonuniqueness of $\mathscr{L}$ can be expressed by the exactness at $W$: if $w \in W$ satisfies $\mathscr{L}(w) = 0$, there exists $v \in V$ such that $w = \mathscr{T}(v)$. In other words, the solution to \eqref{eqn:general} is unique only up to elements in $V$. Another way of putting this is: if $w_1$ and $w_2$ have the same image under $\mathscr{L}$, then $w_1 - w_2$ is in the image of $\mathscr{T}$.

Nonexistence or nonuniqueness can be also phrased as the presence of nontrivial cohomology.

\emph{Stability} means that small perturbations in the data yield only small errors in the solutions. Assume $W$ and $G$ are Banach spaces with norms $\|\cdot\|_W$ and $\|\cdot\|_G$, respectively. Stability can be expressed by boundedness of the solution operator $\mathscr{L}^{-1}$: there exists a positive constant $C$ such that $\|\mathscr{L}^{-1}\|_{G \to W} \leq C$, where
\[
\|\mathscr{L}^{-1}\|_{G \to W} := \sup_{0 \neq g \in G}\,\frac{\|\mathscr{L}^{-1}(g)\|_W}{\|g\|_G}.
\]
In other terms, if $\|f\|_G$ is small, then the corresponding solution $\|w\|_W$ remains small when $\mathscr{L}$ is invertible with a bounded inverse. Another standard form is the \emph{Ladyzhenskaya-Babu\v{s}ka-Brezzi inf-sup condition}:
\[
\exists \,\gamma > 0 \quad \text{such that}\quad
\inf_{g \in G}\,\sup_{v \in W}\,\frac{(\mathscr{L}v,\, g)}{\|v\|_W\,\|g\|_G} \;\geq\; \gamma \;>\;0.
\]
All these considerations indicate that complexes and cohomology are fundamental for equations and models behind them, as we will discuss in detail in this paper.
\end{tcolorbox}

On the discrete level, the LBB conditions guide the design of finite element spaces or discrete patterns. Concretely, the LBB condition requires that discrete spaces (e.g., finite elements) $V^{i}$ fit into \eqref{deRham} and satisfy corresponding algebraic relations (for instance, $\div V^{2} = V^{3}$). Pioneering numerical analysts such as N\'ed\'elec, Raviart, and Thomas developed these finite elements individually \cite{Nedelec.J.1986a,Nedelec.J.1980a,brezzi1985two,Raviart.P;Thomas.J.1977a}. Later, Bossavit observed that they can be interpreted as a complex and are special cases of \emph{Whitney forms} \cite{hiptmair2001higher,whitney2012geometric,Bossavit.A.1998a}. This observation led to finite element differential forms and the foundation of finite element exterior calculus \cite{hiptmair2001higher,arnold2018finite,Arnold.D;Falk.R;Winther.R.2006a,Arnold.D;Falk.R;Winther.R.2010a}.

Although numerics is not the focus of this paper, as a motivation, we include two numerical examples to demonstrate the significance of compatible discretisation (particular with respect to differential complex structures) for computation. 
\begin{example}[compatible discretisation of multi-fields]
Let $\Omega$ be a bounded polyhedral domain. 
We recall the finite element methods for solving the Poisson equation on $\Omega$ with homogeneous boundary condition
\begin{subeqnarray}\label{eqn:poisson}
-\Delta u=f, \quad\rm{~in~} \Omega, \\
u=0, \quad\rm{~on~} \partial \Omega.
\end{subeqnarray}
Testing the equation with $v$ and integrate by parts, we get the weak form:
 find $u\in H_{0}^{1}(\Omega)$, such that
$$
\int_{\Omega} \nabla u\cdot \nabla v\, dx=\int_{\Omega}fv, \quad\forall v\in H_{0}^{1}(\Omega). 
$$
The Galerkin method is a discretisation of the weak formulation by replacing the infinite dimensional space $H^{1}_{0}$ by a finite dimensional subspace $V_{h}\subset H^{1}_{0}$:
 find $u_{h}\in V_{h}$, such that
$$
\int_{\Omega} \nabla u_{h}\cdot \nabla v_{h}\, dx=\int_{\Omega}fv_{h}, \quad\forall v_{h}\in V_{h}. 
$$
Different choices of $V_{h}$ lead to different numerical methods. Let $\mathcal{T}_{h}$ be a triangulation of $\Omega$. The Lagrange finite element space consists of continuous functions which are piecewise polynomials: 
$$
V_{h}:=\{u_{h}|_{T}\in \mathcal{P}_{1}(T), \forall T\in \mathcal{T}_{h}\}\cap H^{1}_{0}(\Omega). 
$$
This definition implies that $V_{h}\subset C^{0}(\Omega)$. If $\mathcal{T}_{h}$ consists of simplices (more precisely, if $\mathcal{T}_{h}$ is a simplicial complex), then $V_{h}$ is spanned by the hat function at each interior vertex which is equal to one at this vertex and linearly decays in the patch of the vertex (see Figure \ref{fig:hat-function}). Detailed discussions on finite element methods can be found in standard textbooks, e.g., \cite{brenner2008mathematical,brezzi2008mixed,philippe1978finite}.
\begin{figure}[htbp]
\begin{center}
\includegraphics[width=1.5in]{./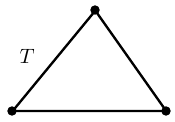} \quad\quad
\includegraphics[width=1.5in]{./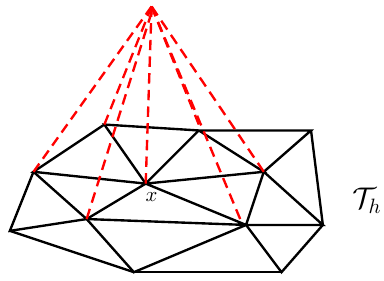} 
\caption{Hat function on a triangulation $\mathcal{T}_{h}$.}
\label{fig:hat-function}
\end{center}
\end{figure}

Consider an example where $f=1$ and $\Omega$ is the unit square. The following solution is obtained with NGSolve \index{NGSolve} \cite{schoberl2014c++} (see Figure \ref{fig:poisson-primal-ngsolve}).
\begin{figure}[htbp]
\centering
\begin{subfigure} 
 { \includegraphics[width=.50\linewidth]{./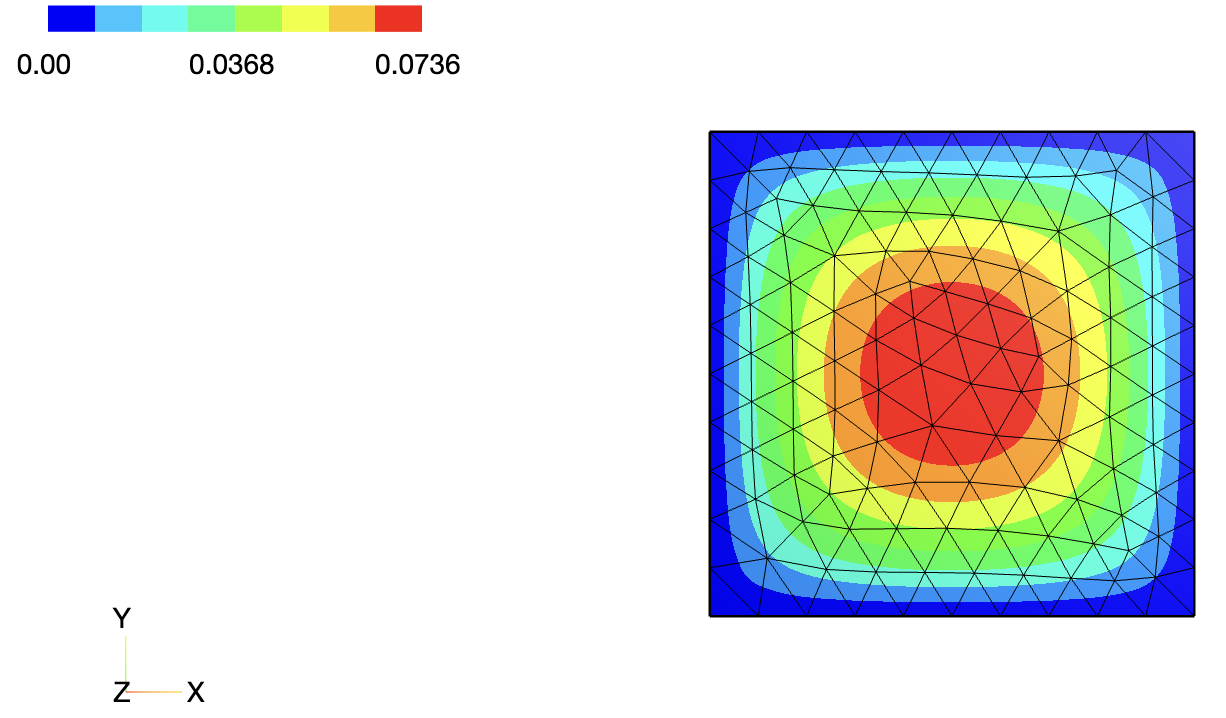}}
\end{subfigure}%
\begin{subfigure} 
  \centering
  \includegraphics[width=.48\linewidth]{./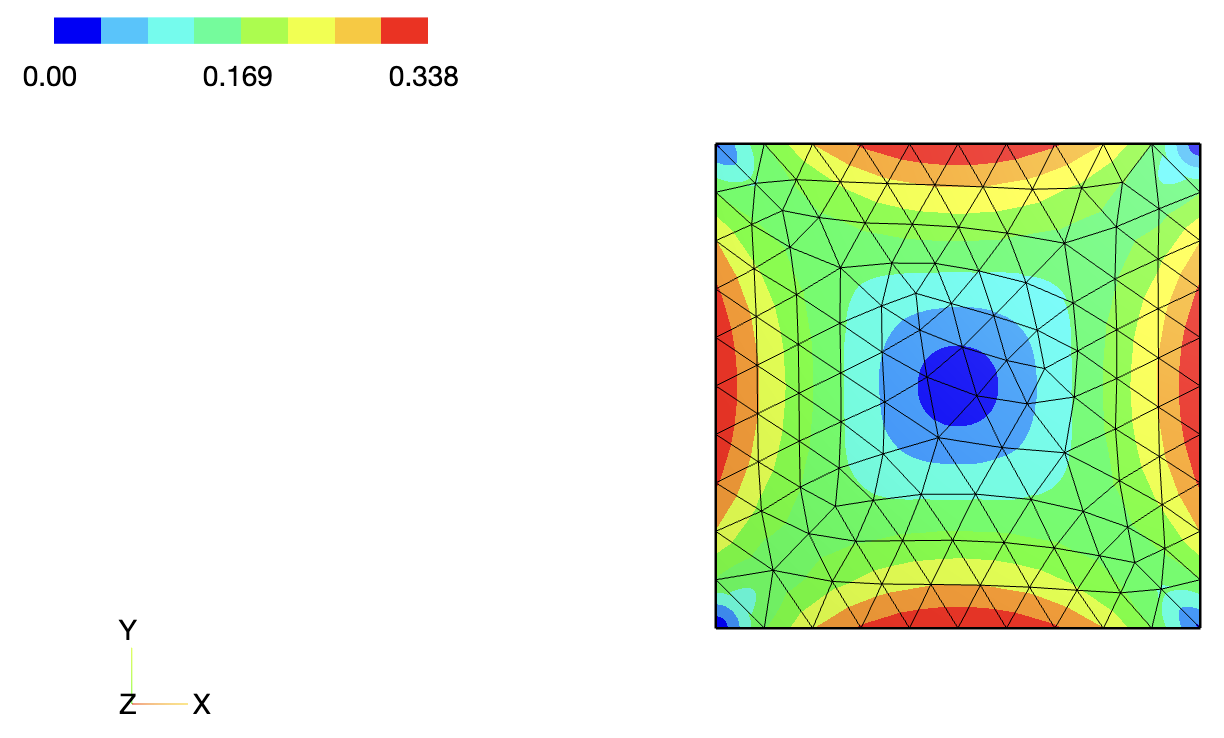}
\end{subfigure}
\caption{Poisson problem on unit square with $f=1$. Left: plot of $u_{h}$, Lagrange elements of second order. Right: plot of the magnitude of $\nabla u_{h}$. }
\label{fig:poisson-primal-ngsolve}
\end{figure}

We may consider another formulation. Introduce $\vec\sigma:=-\grad u$. Then the Poisson equation \eqref{eqn:poisson} has an equivalent form, which we refer to as the {\it mixed form}\index{mixed form}:
\begin{subeqnarray}\label{eqn:poisson-mixed}
\vec\sigma+\grad u&=0,\\
\div\vec \sigma &=f.  
\end{subeqnarray}

When we numerically solve \eqref{eqn:poisson-mixed}, we immediately encounter the problem of how to discretise $\vec \sigma$ and $u$. In the following, we explore several combinations\footnote{When we computed the errors, we used the primal formulation with mesh size 0.01 and piecewise polynomials of degree 3 as the reference solution. }. In particular, we investigate combinations of the Lagrange finite element space $V_{h}$ and its vector version, piecewise polynomials of degree less than or equal to $k$, denoted by $\mathrm{DG}_{k}$, and the Raviart-Thomas space 
$$
\mathrm{RT}_{k}:=\{\sigma|_{T}=\vec{a}+b\vec x: \vec a\in [\mathcal{P}_{k}]^{2}, b\in \mathcal{P}_{k}, \, \forall T\in \mathcal{T}_{h}; \vec \sigma\in H(\div)\}. 
$$


\begin{figure}[htbp]
\centering
\begin{subfigure} 
  \centering
{ \includegraphics[width=.50\linewidth]{./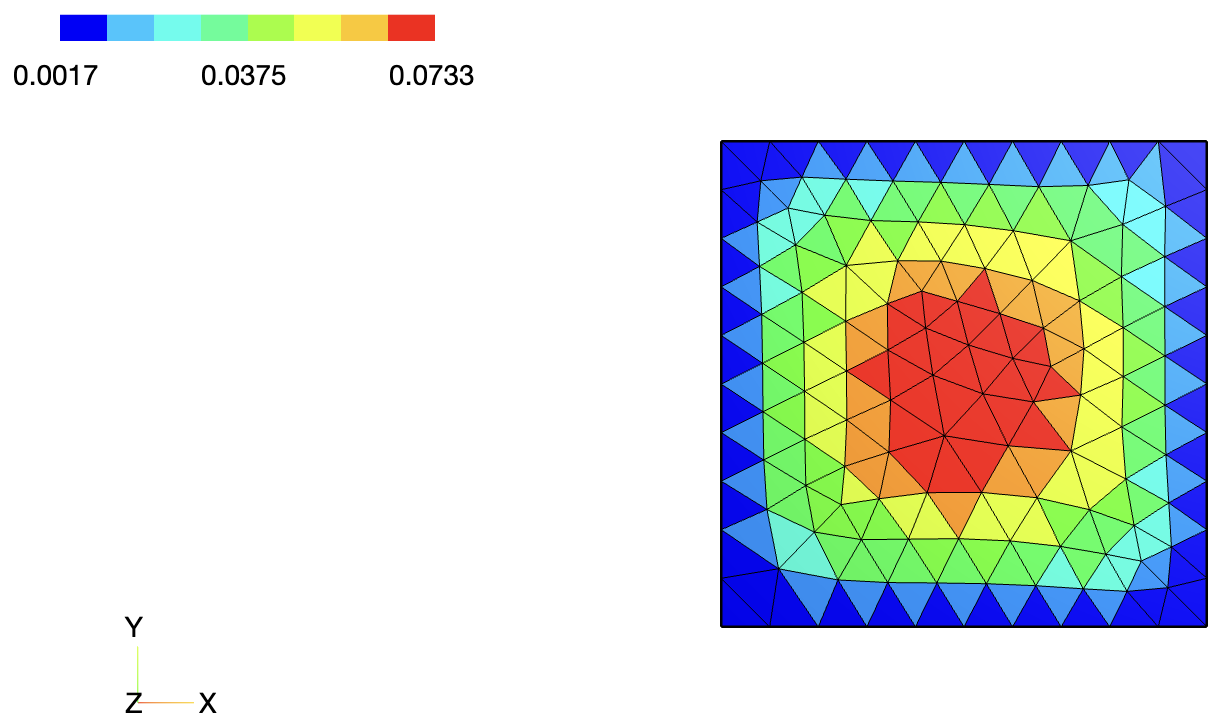}}
\end{subfigure}%
\begin{subfigure} 
  \centering
  \includegraphics[width=.48\linewidth]{./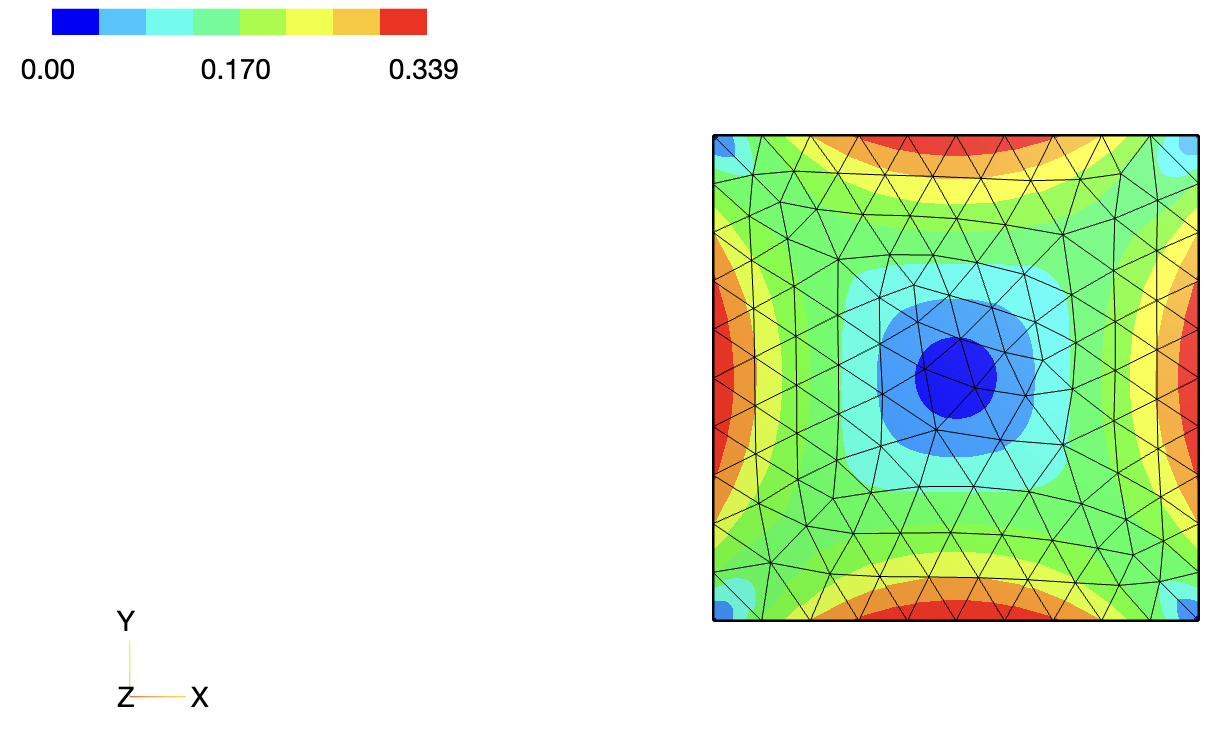}
\end{subfigure}
\caption{Mixed Poisson problem on unit square with $f=1$. Finite element pair: $u_{h}\in \mathrm{RT}_{1}$, the first order Raviart-Thomas element; $\vec \sigma_{h}\in \mathrm{DG}_{0}$, piecewise constants.  Left: plot of $u_{h}$. Right: plot of the magnitude of $\vec \sigma_{h}$. Error $u$: 0.0038, error $\vec\sigma$: 0.0017. Mesh size = 0.1.}
\label{fig:poisson-mixed-ngsolve-1}
\end{figure}

\begin{figure}[htbp]
\centering
\begin{subfigure} 
  \centering
{ \includegraphics[width=.50\linewidth]{./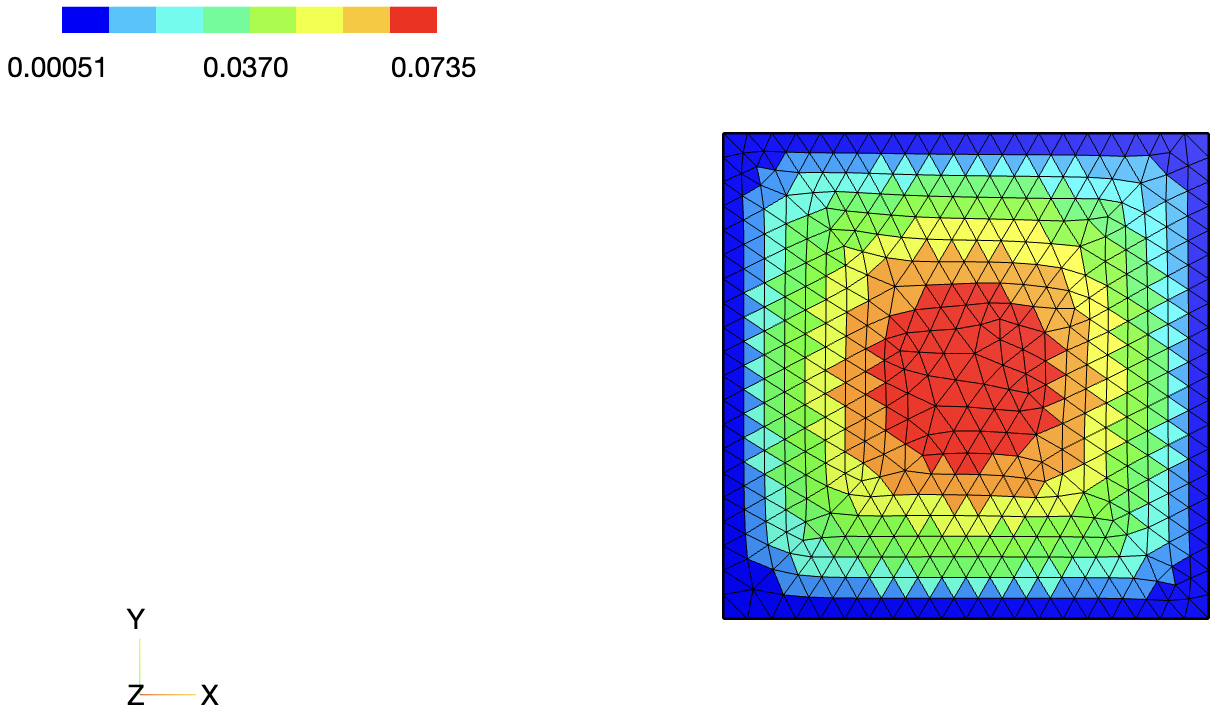}}
\end{subfigure}%
\begin{subfigure} 
  \centering
  \includegraphics[width=.48\linewidth]{./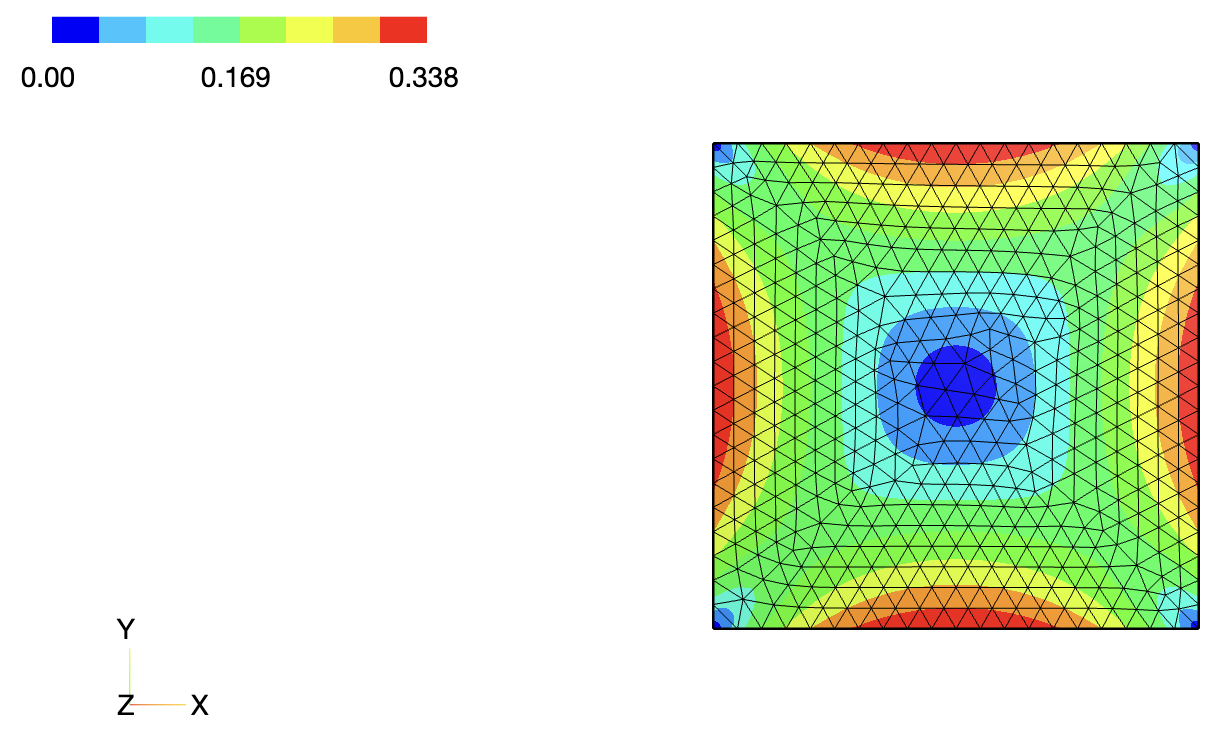}
\end{subfigure}
\caption{Mixed Poisson problem on unit square with $f=1$. Finite element pair: $u_{h}\in \mathrm{RT}_{1}$, the first order Raviart--Thomas element; $\vec\sigma_{h}\in \mathrm{DG}_{0}$, piecewise constants.  Left: plot of $u_{h}$. Right: plot of the magnitude of $\vec\sigma_{h}$. Error $u$: 0.0019, error $\vec\sigma$: 0.0005. Mesh size = 0.05.}
\label{fig:poisson-mixed-ngsolve-1-refined}
\end{figure}

\begin{figure}[htbp]
\centering
\begin{subfigure} 
  \centering
{ \includegraphics[width=.50\linewidth]{./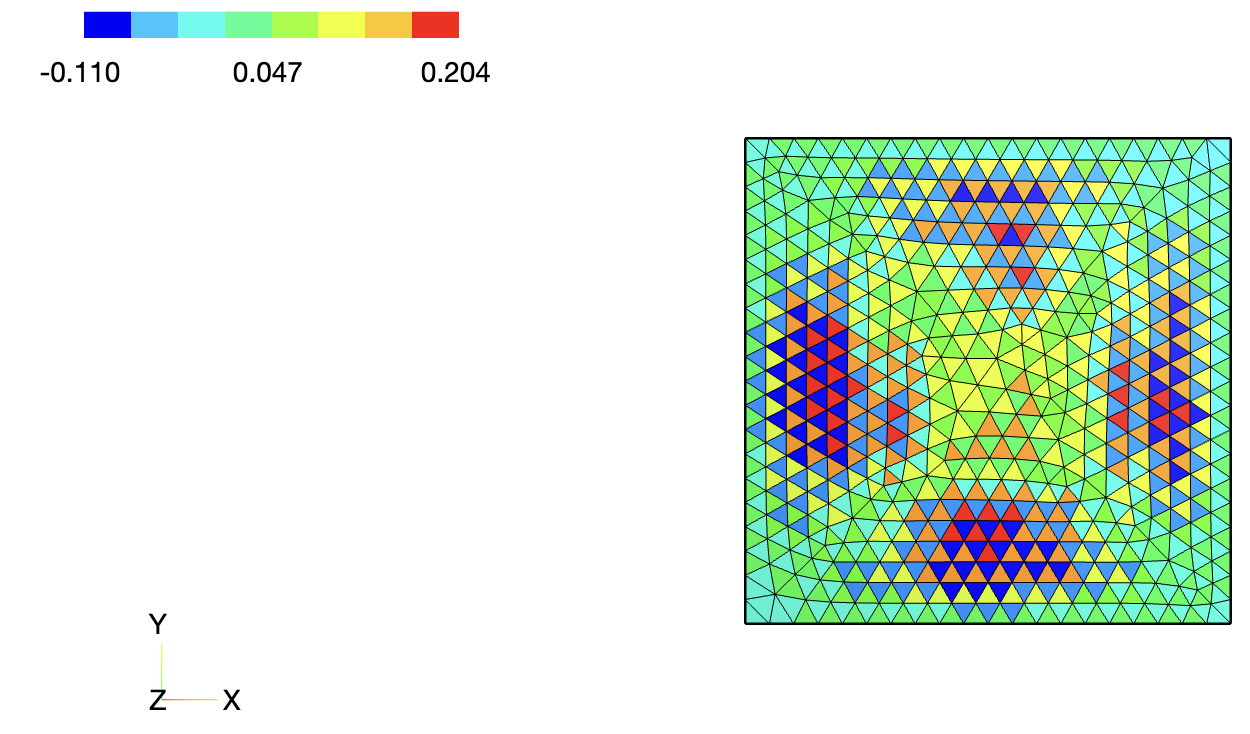}}
\end{subfigure}%
\begin{subfigure} 
  \centering
  \includegraphics[width=.48\linewidth]{./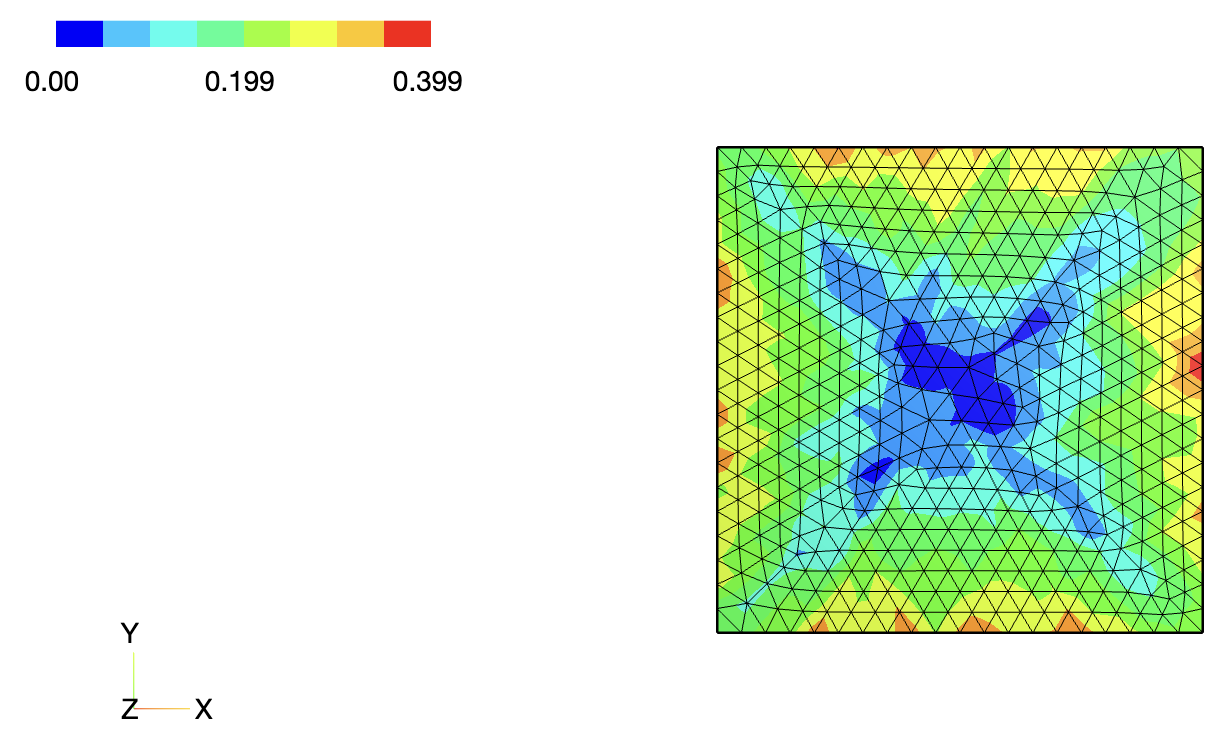}
\end{subfigure}
\caption{Mixed Poisson problem on unit square with $f=1$. Finite element pair: $u_{h}\in C^{0}\mathcal{P}^{1}$, the vector version of the Lagrange spaces; $\vec\sigma_{h}\in \mathrm{DG}_{0}$, piecewise constants.  Left: plot of $u_{h}$. Right: plot of the magnitude of $\vec\sigma_{h}$. Mesh size = 0.05.}
\label{fig:poisson-mixed-ngsolve-C0P1-DG0}
\end{figure}
One may explore other possibilities. For example, one can discretise each component of $\vec \sigma$ by the Lagrange element of degree 1, and $u$ by the Lagrange element of degree 2. The solver fails. In fact, the resulting matrix is singular in this case.

These numerical results demonstrate that only proper combinations of finite element spaces lead to correct convergence.
 
\end{example}

\begin{example}[incompressible flows]
Consider the Stokes system in its weak form:
\begin{subequations}\label{stokes}
\begin{align}
\int_{\Omega} \nabla \bm{u} \cdot \nabla \bm{v} \, dx - \int_{\Omega} p \nabla \cdot \bm{v} \, dx &= \int_{\Omega} \bm{f} \cdot \bm{v} \, dx,  \quad \forall \bm{v},  \\
\int_{\Omega} \nabla \cdot \bm{u} \, q \, dx &= 0,  \quad \forall q, 
\end{align}
\end{subequations}
where $\bm{u}$ is the velocity field (a vector field) and $p$ is the pressure (a scalar function).

We first consider discretisations of the velocity and pressure. In two dimensions, under mild mesh assumptions, if one uses continuous piecewise $\mathcal{P}_{4}$ polynomials\footnote{Hereafter, $\mathcal{P}_{r}$ denotes the space of polynomials of degree less than or equal to $r$.} for velocity and discontinuous piecewise $\mathcal{P}_{3}$ polynomials for pressure, the scheme converges correctly (see Figure~\ref{fig:stokes-solution}, left). However, if velocity is discretised by $\mathcal{P}_{2}$ and pressure by $\mathcal{P}_{1}$, the scheme fails. Interestingly, if one applies the Alfeld split to each triangle (subdividing it into three sub-triangles), the $\mathcal{P}_{2}$–$\mathcal{P}_{1}$ scheme recovers convergence (see Figure~\ref{fig:stokes-solution}, right). This phenomenon shows that, in this example, convergence of the scheme is closely tied to the mesh structure. The computations are carried out using NGSolve\footnote{https://ngsolve.org}\cite{schoberl2014c++}.

\begin{figure}[ht]
    \centering
        \begin{minipage}[b]{0.48\textwidth}
        \centering
        \includegraphics[width=0.5\textwidth]{./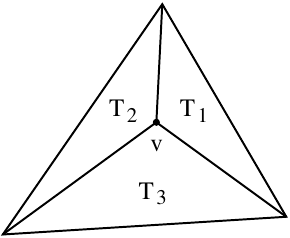}
        \caption{Illustration of the Alfeld split.}
        \label{fig:CT}
    \end{minipage}
\end{figure}

\begin{figure}[ht]
    \centering
    \begin{minipage}[b]{0.48\textwidth}
        \centering
                \includegraphics[width=\textwidth]{./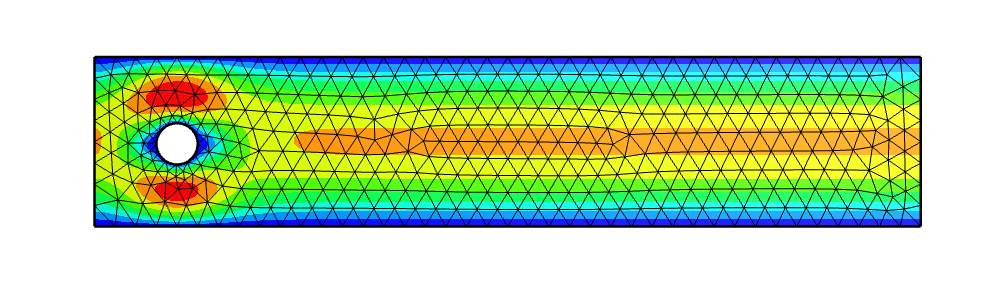}
    \end{minipage}
    \hfill
    \begin{minipage}[b]{0.48\textwidth}
        \centering
        \includegraphics[width=\textwidth]{./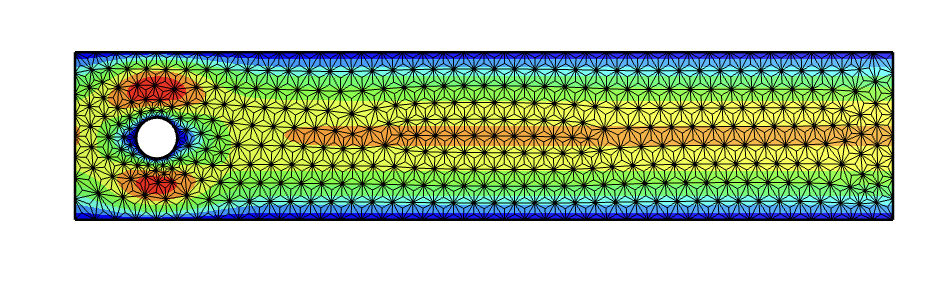}
    \end{minipage}
    \caption{Numerical results for the Stokes problem with different discretisations. Left:  $\mathcal{P}_{4}$–$\mathcal{P}_{3}$ scheme; Right: $\mathcal{P}_{2}$–$\mathcal{P}_{1}$ scheme with Alfeld split.}
    \label{fig:stokes-solution}
\end{figure}

This example demonstrates that in the Stokes problem, the choice of discrete velocity and pressure spaces is crucial.  
To interpret the results, we note that, at the continuous level, the well-posedness of the Stokes problem \eqref{stokes} is guaranteed by the inf-sup condition:
\[
\inf_{q \in L^{2}} \sup_{\bm{v} \in \bm{H}^{1}} 
\frac{\int \div \bm{v} \, q \, dx}{\|\bm{v}\|_{H^{1}} \, \|q\|_{L^{2}}} \geq \gamma > 0.
\]
This condition requires that the velocity and pressure spaces satisfy certain relations: the divergence operator maps the velocity space $\bm{H}^1$ onto the pressure space $L^2$, with a norm control that ensures stability.  
It can be expressed in terms of the exactness and norm control of the following de Rham complex at $L^2$:
\begin{equation}\label{eqn:continuous-stokes}
\begin{tikzcd}[row sep=tiny]
0 \arrow{r} & H^{2} \arrow{r}{\curl} & \bm H^{1} \arrow{r}{\div} & L^{2} \arrow{r} & 0. \\
&& \text{velocity} & \text{pressure} &
\end{tikzcd}
\end{equation}
Thus, the \emph{complex encodes the relation between different variables in a multiphysics system}. 

At the discrete level, stability is guaranteed by a discrete inf-sup condition:
\begin{equation}\label{discrete-inf-sup-stokes}
\inf_{q_h \in Q_h} \sup_{\bm{v}_h \in \bm{V}_h} 
\frac{\int \div \bm{v}_h \, q_h \, dx}{\|\bm{v}_h\|_{H^{1}} \, \|q_h\|_{L^{2}}} \geq \gamma > 0.
\end{equation}
This can be expressed through a discrete de Rham complex:
\begin{equation}\label{eqn:discrete-stokes}
\begin{tikzcd}[row sep=tiny]
0 \arrow{r} & W_h \arrow{r}{\curl} & \bm{V}_h \arrow{r}{\div} & Q_h \arrow{r} & 0. \\
&& \text{velocity} & \text{pressure} &
\end{tikzcd}
\end{equation}
Our aim is to construct a velocity space $\bm{V}_h \subset \bm{H}^1$ and a pressure space $Q_h \subset L^2$ such that $\div \bm{V}_h = Q_h$. Although this algebraic condition alone does not imply \eqref{discrete-inf-sup-stokes}, it is the first and most essential step toward establishing the discrete inf-sup condition. Designing such Stokes velocity–pressure pairs has long been a challenging problem. The differential complex provides a new perspective: from \eqref{eqn:continuous-stokes} and \eqref{eqn:discrete-stokes} we see that, in order to obtain the spaces $\bm{V}_h$ and $Q_h$, one essentially needs to construct a discrete complex consisting of finite element spaces. This fundamentally involves discretising the first space $H^2$, which is the main source of difficulty.  

In fact, discretising $H^2$ requires $C^1$ finite element or spline spaces. Constructing such spaces involves deep and delicate questions in algebraic geometry (for instance, the dimension of $C^1$ piecewise polynomial spaces on a triangulation \cite{lanini2025approximation,billera1988homology, billera1991dimenstion}). These issues propagate through the complexes \eqref{eqn:continuous-stokes} and \eqref{eqn:discrete-stokes} into the construction of Stokes elements, which explains why many open problems remain in higher-dimensional Stokes elements \cite{guzman2019scott}.

Returning to the numerical results: on an Alfeld split, one can define piecewise cubic $\mathcal{C}^1$ scalar splines \cite{lai2007spline} to discretise $W_h$ in \eqref{eqn:discrete-stokes}. By differentiation, this yields continuous $\mathcal{P}_2$ polynomials for the velocity space on the same split, paired with discontinuous $\mathcal{P}_1$ polynomials for the pressure space (see Figure~\ref{fig:stokes-complex-alfeld}). This finite element pair satisfies the key condition $\div \bm{V}_h = Q_h$ as well as the discrete inf-sup condition \eqref{discrete-inf-sup-stokes}. By contrast, such cubic $\mathcal{C}^1$ spline spaces do not exist on general triangulations. This explains why the scheme fails on general meshes, but converges to the correct solution on the Alfeld split.

\begin{figure}[ht]
    \centering
    \includegraphics[height=0.21\textheight]{./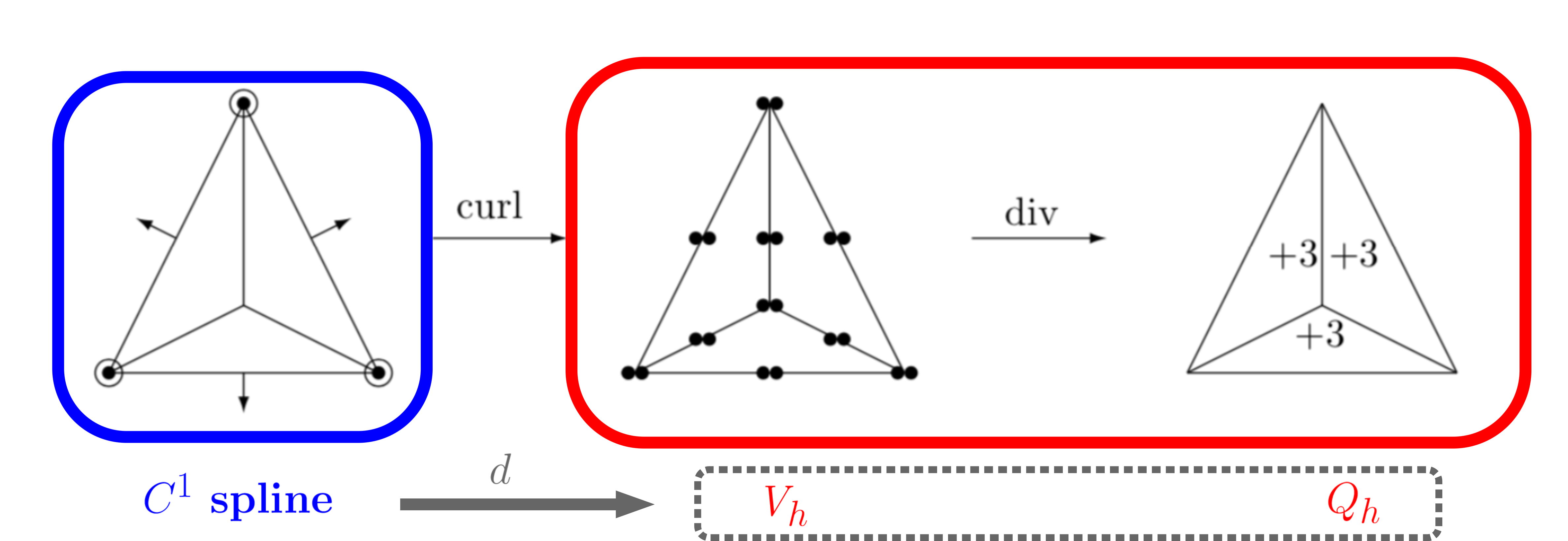}
    \caption{The Stokes complex on the Alfeld split. By differentiating a $C^1$ scalar spline space, one obtains the velocity and pressure spaces.}
    \label{fig:stokes-complex-alfeld}
\end{figure}

The above velocity–pressure pair on the Alfeld split was first proposed by Arnold and Qin \cite{arnold1992quadratic}, and later analysed from the viewpoint of complexes in \cite{christiansen2018generalized}. This construction illustrates a general strategy: to build Stokes finite elements by differentiating scalar spline spaces with high continuity (see Figure~\ref{fig:stokes-complex-alfeld}). In combination with existing results on spline spaces \cite{lai2007spline}, this approach allows us to reinterpret known results for the Stokes problem from the spline perspective, and to derive new constructions. In this way, it not only deepens our understanding of the discrete de Rham complexes, but also provides new theoretical foundations for stable and efficient discretisations of three-dimensional fluid problems.

 \end{example}




\medskip
{\noindent\bf Structure-preserving discretisation}
Another general philosophy in numerical mathematics is \emph{structure-preserving discretisation}. The importance of this subject is based on the fact that formulations that are equivalent at the continuous level can exhibit rather different numerical behaviours \cite{fengkang-fem,hairer2006geometric}. A striking example is geometric integrators \cite{hairer2006geometric,feng1987symplectic}: for the numerical solutions of classical mechanics with ordinary differential equations (ODEs), discretising the Newtonian formulation directly can lead to significant errors in the long-term evolution; meanwhile, discretising a Hamiltonian formulation to preserve the symplectic structure yields stable computations. Below, we demonstrate a related example demonstrating similar issues in computing the long-term behaviour of magnetohydrodynamics, where both \emph{temporal} and \emph{spatial} discretisations are crucial for obtaining physically correct solutions. 

There are many kinds of structures. A broad class of structure-preserving problems focus on preserving certain conserved quantities, such as mass, energy, helicity, or enstrophy of fluid systems \cite{gawlik2022finite,cotter2023compatible,arnold1999topological}. Another major class focuses on constraints. In many field theories (electromagnetism, general relativity, Yang-Mills fields, etc.), evolution equations are accompanied by algebraic and/or differential constraints, which are automatically satisfied by the continuous evolution. Preserving those constraints numerically is often crucial (e.g., in magnetohydrodynamics and numerical relativity) \cite{Brackbill.J;Barnes.D.1980a,Toth.G.2000a,alcubierre2008introduction,gourgoulhon20123}, because violating them may destroy essential mathematical properties and lead to numerical instability.   There are many additional examples, such as positivity preservation \cite{leonard1993positivity,hu2013positivity} or asymptotic preservation \cite{jin2022asymptotic}.



As we have seen, the existence and uniqueness of solutions can be viewed as certain cohomologies being trivial.  
Therefore, ensuring the well-posedness of numerical formulations can be viewed as a special type of {structure-preservation}, i.e., \emph{cohomology-preservation}. Moreover, nontrivial topologies of computational domains lead to nontrivial cohomology. Failure of dealing with such nontrivial cohomology properly also leads to spurious solutions \cite{Arnold.D;Falk.R;Winther.R.2010a,arnold2018finite}.  The key to all these cases is \emph{respecting the algebraic and differential relations among the variables}, often summarised as \emph{preserving differential complexes} such as \eqref{deRham}.

\begin{example}[long-term evolution]\label{example:relaxation}

A fundamental problem in plasma physics is to study how a magnetic field given at the initial time evolves in a long time (whether a stationary state exists and if so, what are the asymptotic properties etc.). This problem is sometimes referred to as magnetic relaxation. 
Below we provide a numerical example and compare a plain algorithm with an algorithm that preserves a conserved quantity \cite{he2025topology}. In the following, we solve a slightly simplified equation of the full MHD system, called the magneto-frictional system \cite{chodura19813d}:
\begin{subequations}\label{eqn:magneto-frictional-equations}
\begin{align}
    \label{eqn:magnetic-advection}\partial_t \bm{B} + \curl\bm{E}&=\bm{0},\\
    \label{eqn:electric-field}\bm{E}+ \bm{u}\times\bm{B} &= \bm{0}, \\
    \bm{j}& = \curl\bm{B}, \\
    \label{eqn:velocity}\bm{u} &= \tau \bm{j}\times\bm{B},
\end{align}
\end{subequations}
with initial data $\bm{B}|_{t=0} = \bm{B}_0$ satisfying the magnetic Gauss law $\div\bm B_0 = 0$.
 Here, the system is closed by the last equation  for given $\tau > 0$ which guarantees the energy decay
 $$
 \frac{1}{2}\frac{d}{dt}\|\bm B\|^{2}=-\tau\|\bm B\times \bm j\|^{2},
 $$
 whereas the full MHD system is closed by coupling with the Navier--Stokes equation. 
      
      We use the Hopf fibration \cite{smiet2017ideal} as initial data:
\begin{equation*}
    \bm B_{0} = \frac{4\sqrt{a}}{\pi(1+r^2)^3}(2y(y-xz),-2(x+yz),(-1+x^2+y^2-z^2)).
\end{equation*}
\smallskip
The Hopf fiberation is highly helical, as every single field line of this field is a perfect circle, and every single field line is linked with every other one \cite{smiet2017ideal}. We use $\tau = 10$, $dt = 1$ and $T=1000$.  Quadratic-invariant-preserving temporal schemes \cite{andrews2024high}  are used to preserve the helicity.  
 
The results were presented in \cite{he2025topology}. The figure on the left shows that history of the evolution by a scheme conserving the helicity $\int\bm A\cdot\bm B\, dx$, and the figure on the right shows a standard scheme that does not preserve this quantity.
\begin{figure}[htbp]

\centering
\begin{minipage}[t]{0.48\textwidth}
\centering
\includegraphics[width=8.3cm]{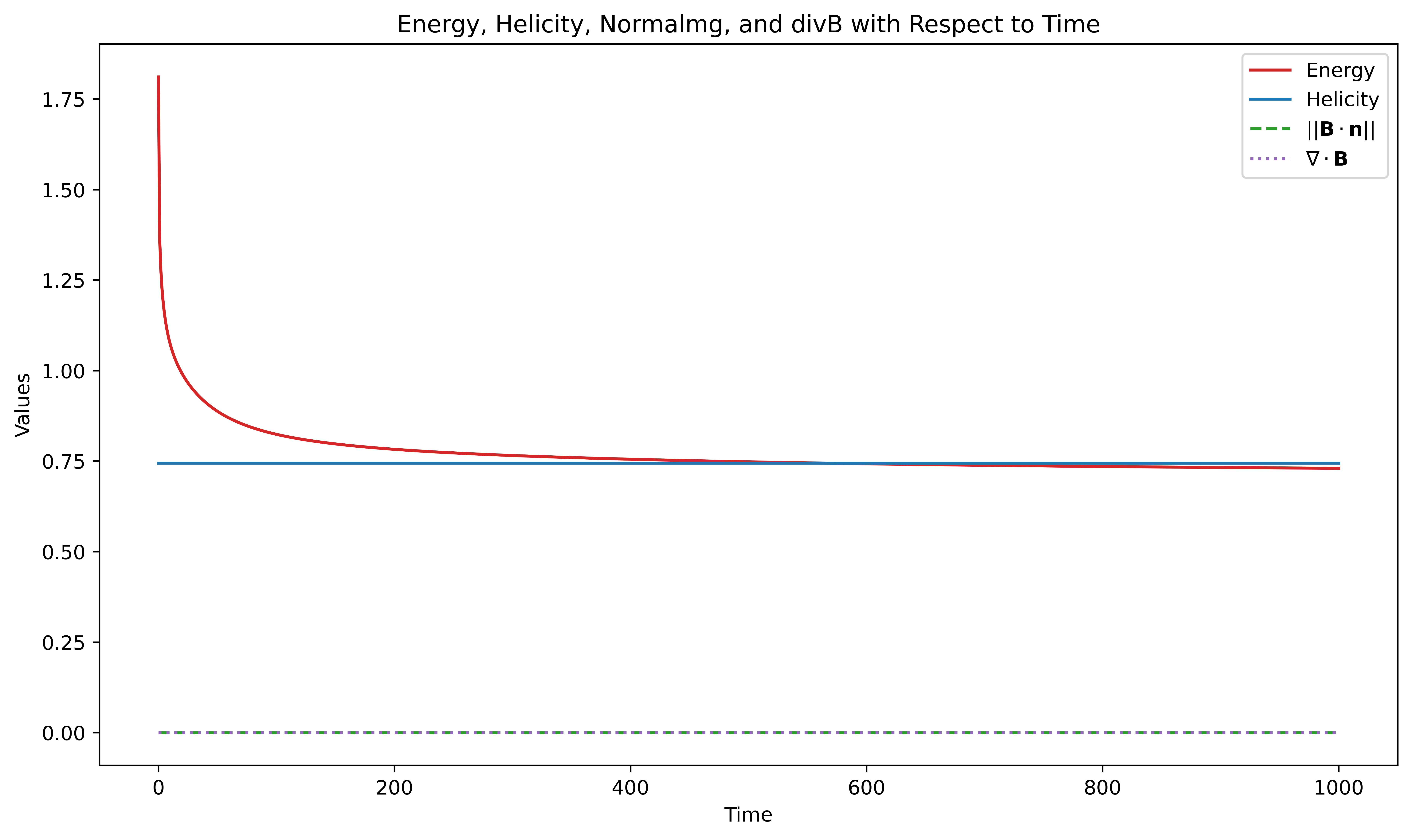}
\caption{Helicity-preserving scheme}
\end{minipage}
\begin{minipage}[t]{0.48\textwidth}
\centering
\includegraphics[width=8.3cm]{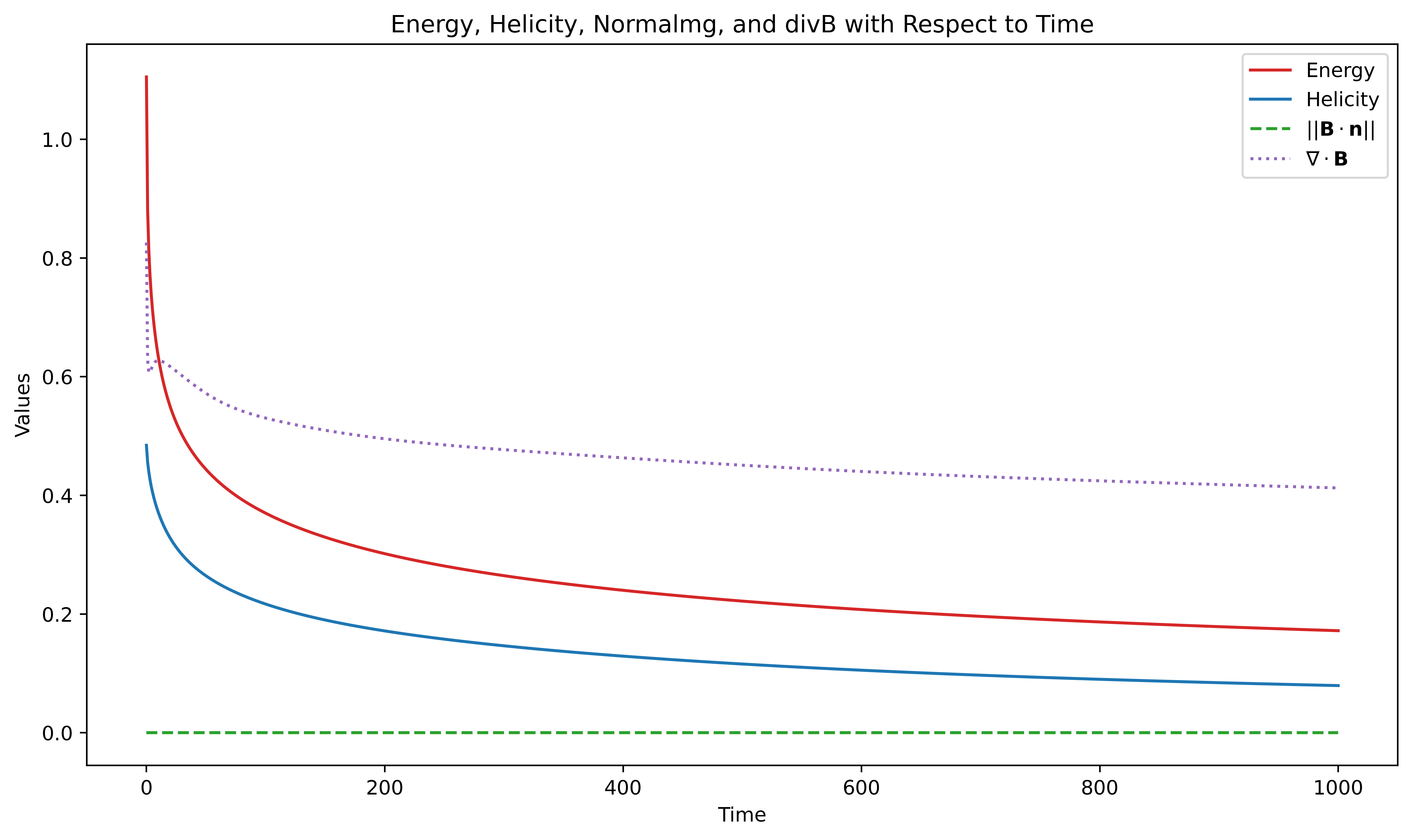}
\caption{Galerkin scheme based on Lagrange finite elements (non-preserving)}
\end{minipage}
\end{figure}
The two schemes show rather different results. This is because the scheme not preserving helicity fails to mimic a crucial topology mechanism for the relaxation problem. Details of this mechanism, as an example of the appearance of complexes and cohomology in fluid dynamics, will be discussed in Section \ref{sec:fluids}.

\end{example}

 \medskip
{\noindent \bf Discrete theories v.s. discretisation.}  
We return to another motivation for developing discrete notations: discrete theories. 

Numerous efforts have sought to reconcile the incompatibility between general relativity and quantum theory, producing various notions of discrete field theories.  For theoretical or computational purposes, various models have been investigated in discrete mechanics \cite{lee1985discrete,lee1987difference}, lattice gauge theory \cite{dalmonte2016lattice,rothe2012lattice}, and quantum field theory and discrete gravity \cite{dupuis2012discrete,immirzi1997quantum} etc. Regge calculus \cite{regge1961general} is one example.
 Other important motivation emerges from discrete differential geometry (DDG) and computer graphics, where one develops a discrete theory which mimics fundamental principles of the continuous version. The main message, as indicated in \cite{bobenko2008discrete}, is to \emph{discretise the whole theory, not just the equations.}

In these areas, compatible discretisation and discrete patterns can be even more vital for at least two reasons. First, \emph{convergence to true solutions} may be secondary. For instance, in computer graphics, obeying physical laws (often certain conservation laws) is vital for realistic visualisation, whereas exact convergence might be less critical. In data sciences or complex systems such as biology \cite{jensen2020force}, first-principle models are often unavailable; so one must rely on coarser structures (e.g., topological connectivity of nodes) rather than precise geometric distances. In such cases, modelling and algorithms built on discrete structures that enforce discrete conservation laws are crucial. For instance, discrete differential forms can be defined on cliques of a graph with applications to ranking, data representation, and deep learning, etc. \cite{lim2020hodge}.  Second,  if the nature of the world were discrete, then one should regard discrete models as fundamental objects and investigate their convergence in the continuum limit, rather than first having the continuum models and discretising them.

In numerical analysis, these two traditions also coexist. Traditional finite element methods can be generalised to arbitrary order with a rigorous theory that mirrors the continuous structures. However, these function-space-based methods can be more rigid (e.g., it is challenging to handle higher-order operators). On the other hand, lattice theories (finite difference with MAC/Yee grids, Discrete Exterior Calculus, discrete differential geometry, lattice gauge theory) encode topological or geometric structures in a purely discrete form, are flexible (sometimes extendable to graphs), and are physically intuitive; yet their convergence and many properties may be less understood.

\medskip
\noindent
{\bf This survey paper: structure-aware formulations.}
The first question in structure-preserving numerical computation is \emph{which structures} should be preserved. For specific problems, this typically boils down to understanding the key mathematical or physical principles and \emph{choosing suitable variables.} Concerning the fundamental role of differential complexes, we emphasise \emph{structure-aware formulations via differential complexes} in this paper. First, the resulting models and formulations enjoy the well-posedness encoded in complexes. Second, \emph{compatible and structure-preserving discretisation naturally follows by discretising the underlying complexes and preserving the cohomology.}

Each physical problem has its own structural features, encoded in different complexes. Thus, to make the above idea a practical programme, we discuss differential complexes and cohomology in various contexts. On the one hand, for specific applications, one investigates the differential structures therein and formulates the problem using complexes. On the other hand, one examines the construction and properties of complexes to guide modelling and computation. Sometimes surprising connections arise that merge the two directions. Many results from pure mathematical research turn out to coincide with models constructed from a rather different consideration. This demonstrates unified structures in sciences, engineering and mathematics, and the belief that {\it elegant mathematical structures are useful}. 

We briefly mention linear elasticity to demonstrate the above idea. Designing stable finite elements for the Hellinger-Reissner principle (in which both stress and displacement are used as variables) demands a complex that captures the divergence of a symmetric matrix field. This motivated Arnold, Falk, and Winther \cite{arnold2002mixed,Arnold2006a} to seek \emph{elasticity complexes} around 2000:
\begin{equation}\label{elasticity}
\begin{tikzcd}[row sep=tiny]
 C^{\infty}(\Omega; \mathbb{R}^{3})  \arrow{r}{\sym\grad} & C^{\infty}(\Omega; \mathbb{R}^{3\times 3}_{\sym})   \arrow{r}{\curl\circ \mathrm{T}\circ \curl} & C^{\infty}(\Omega; \mathbb{R}^{3\times 3}_{\sym})     \arrow{r}{\div}& C^{\infty}(\Omega; \mathbb{R}^{3}).\\
\mbox{displacement (vector)}&\mbox{strain (sym matrix)}&\mbox{stress (sym matrix)}&\mbox{load (vector)}
\end{tikzcd}
\end{equation}
The complex \eqref{elasticity} is often referred to as the Kröner complex \cite{kroner1960general,kroner1985incompatibility,kroner1981continuum} in mechanics or the Calabi complex \cite{calabi1961compact} in geometry. These efforts led to fruitful interactions between numerical analysts and differential geometers on the so-called Bernstein-Gelfand-Gelfand (BGG) construction, which is a mechanism for deriving various new complexes from the de~Rham complex \cite{arnold2021complexes}. This eventually led to a breakthrough in constructing finite elements for elasticity \cite{arnold2002mixed}.

Most early developments on BGG complexes in numerical analysis, however, focused on specific examples in linear elasticity. It remained unclear whether the entire elasticity complex (and other complexes) enjoy desired algebraic and analytic properties such as those in \cite{Arnold.D;Falk.R;Winther.R.2010a}. Part of the difficulty arose from bridging algebraic and analytic structures, i.e., establishing the BGG construction (a method to derive new complexes from the de~Rham complex) with Sobolev spaces. These challenges were addressed in \cite{arnold2021complexes}, where a systematic way to build new complexes with analytic foundations was established. It turns out that not only does a diverse \emph{zoo of complexes} emerge, but the \emph{machinery} itself is useful. Through the BGG construction, one can transport results proven for the de~Rham complex (electromagnetism) to many other complexes (elasticity, differential geometry, general relativity etc.). An example of this paradigm is the derivation of Poincaré operators. Carrying over these results from the de~Rham complex to the elasticity complex, one reproduces the same integral formula discovered by Cesarò and Volterra in 1906 and 1907 in elasticity and defect theory \cite{volterra1907equilibre,cesaro1906sulle} as a special case \cite{christiansen2020poincare,vcap2023bounded}. The BGG complexes and the so-called \emph{twisted complexes} (intermediate steps in deriving BGG complexes) encode rich physics, thus opening the door to \emph{continuum modelling via differential complexes and homological algebra} (see Section~\ref{sec:solid} below).





\medskip
\noindent
{\bf Scope and organization of this paper.}  
This survey does not attempt to give a comprehensive review of the application of differential complexes to any single scientific topic, which will demand extensive treatments on their own. Moreover, although structure-preserving discretisation, particularly finite element exterior calculus, serves as an important motivation for the development, we will not discuss numerics (except for some motivating examples). This is because the advances in finite element exterior calculus require a separate review.   Instead, in this paper, we aim to give a light overview of multiple topics to illustrate the many facets of differential complexes and cohomology, and their role and potential as fundamental tools for structure-aware formulations and structure-preserving computation.

The rest of the paper is organized as follows. Section~\ref{sec:complexes} introduces the notion of differential complexes and several important examples, including the de~Rham complex and other examples derived from it through the Bernstein-Gelfand-Gelfand (BGG) construction. These complexes provide the foundation for subsequent discussions in specific areas. Section~\ref{sec:analysis} discusses how the algebraic structures imply analytic properties. Section~\ref{sec:solid} addresses how differential complexes can be applied in modelling solid mechanics, covering variational principles for elasticity, continuum defects, microstructure, dimension reduction, and multidimensional problems. Although much of this material is classical, we highlight a differential perspective (twisted and BGG complexes, high-order forms, trace complexes, and \v{C}ech double complexes). Section~\ref{sec:fluids} focuses on fluid mechanics, particularly topological hydrodynamics. The differential structures encoded in the de~Rham complex allow us to define topological concepts such as helicity, knots, and topological mechanisms in fluid and magnetohydrodynamic flows that are crucial for questions in solar and astrophysics (e.g., the Parker hypothesis). Section~\ref{sec:einstein} discusses various formulations of the Einstein equations and their links to differential complexes. 
Section \ref{sec:graph} then focuses on a discrete theory, covering graph-based models.

\section{De~Rham and BGG complexes}\label{sec:complexes}

We use three examples of complexes to motivate this section -- the de~Rham complex, the elasticity complex, and the conformal complex. They represent three different classes, namely, topology, Riemannian geometry and conformal geometry, and yet fit in a unified picture. This section only focuses on smooth functions. 

As a warm up, we consider the {\it de~Rham complex in 1D}. Consider the domain $I=(0, 1)$. The 1D de~Rham complex reads
\begin{equation}\label{deRham1D}
\begin{tikzcd}
0 \arrow{r}{} & C^{\infty}(I) \arrow{r}{\frac{d}{dx}} & C^{\infty}(I)  \arrow{r}{} & 0.
\end{tikzcd}
\end{equation}
As before, the first and the last arrows are zero maps. The cohomology at the two $C^{\infty}$ spaces isomorphic to $ \mathbb{R}$ and 0, respectively, reflecting the fact that $\ker(\frac{d}{dx})$ consists of constants, and $\frac{d}{dx}$ maps $C^{\infty}$ {\it onto} $C^{\infty}$ (the inverse map is given by integral).

Let $\Omega$ be a bounded domain in $\mathbb{R}^{3}$. The de~Rham complex in 3D involves the $\grad$, $\curl$, and $\div$ operators: 
\begin{equation}\label{deRham:3D}
\begin{tikzcd}
0 \arrow{r}{} & C^{\infty}(\Omega) \arrow{r}{\grad}& C^{\infty}(\Omega)\otimes \mathbb{R}^{3} \arrow{r}{\curl} & C^{\infty}(\Omega)\otimes \mathbb{R}^{3} \arrow{r}{\div} & C^{\infty}(\Omega) \arrow{r}{} &  0.
\end{tikzcd}
\end{equation}
Here  $C^{\infty}(\Omega)\otimes \mathbb{R}^{3} $ denotes the space of vector-valued smooth functions. 
The complex property holds as $\curl\grad=0$ (gradient fields have no rotation) and $\div\curl =0$ (rotation fields have no source). As we shall see later, the exactness of the complex (except at index 0, where the cohomology is isomorphic to $\ker(\grad)$, consisting of a constant on each connected component of the domain) holds on domains with trivial topology, i.e., contractible domains\index{contractible domains}. This means that
  $\curl  u=0$ implies $u =\grad\phi$ for some $\phi$, and $\div v=0$ implies $ v =\curl \psi$ for some $ \psi$.

De Rham complexes can be formulated on general differential manifolds in any dimensions. The spaces consist of {\it differential forms}, and the operators are {\it exterior derivatives}. Differential forms are skew-symmetric tensor fields and exterior derivatives are the skew-symmetric components of full derivatives. The formalism was introduced by \'Elie Cartan \cite{cartan1899certaines} and now is referred to as {\it exterior calculus}. The skew-symmetry turns out to be the source of many magics. 
 The above examples \eqref{deRham1D} and \eqref{deRham:3D} can be viewed as this general construction written in coordinates.

Different problems involve different operators and variables. In elasticity, the displacement is a vector field (when a metric is in place, a vector field can be identified as a 1-form). The linearised deformation is $e:=\deff(u)=\sym\grad u$.  The {\it elasticity complex} extends vector fields and symmetric gradient: 
\begin{equation}\label{elasticity}
\begin{tikzcd}
 0\arrow{r} &    C^{\infty}\otimes \mathbb{V} \arrow[r, "\deff"] &C^{\infty}\otimes \mathbb{S} \arrow[r, "\inc"] & C^{\infty}\otimes \mathbb{S} \arrow[r, "\div"] &C^{\infty}\otimes \mathbb{V} \arrow[r] & 0.
\end{tikzcd}
\end{equation}
Here $\mathbb{V}:=\mathbb{R}^{3}$, $\mathbb{S}$ denotes the (finite dimensional) space of $3\times 3$ symmetric matrices, and $\inc u:=\curl \circ\operatorname{T}\circ\curl u$ takes column-wise $\curl$, transpose, and take column-wise $\curl$ again. Equivalently, $\inc u=\nabla\times u\times \nabla$, which denotes a column-wise $\curl$ of $u$ composed with a row-wise $\curl$ (the order is irrelavent). Finally, $\div$ is a column-wise divergence in our convention.  In the index notation, we have $(\deff u)_{ij}=1/2(\partial_{i}u_{j}+\partial_{j}u_{i})$, $(\inc g)^{ij}=\epsilon^{ikl}\epsilon^{jst}\partial_{k}\partial_{s}g_{lt}$, and $(\div v)_{i}=\partial^{j}u_{ij}$.  Once can readily check that \eqref{elasticity} is a complex, i.e., $\inc\circ\deff=0$ and $\div\circ \inc=0$. 

If $g$ is the metric of a Riemannian manifold, then  $\inc g:= \nabla\times g\times \nabla$ is the linearised Einstein tensor of $g$ around the Euclidean metric $g_{0}$ (up to a constant multiple). More precisely, the Einstein tensor of the perturbed metric $\tilde{g}:=g_{0}+tg$ is $t\inc g+o(t)$. In three space dimensions, the Einstein tensor carries the same information as the Ricci and Riemann curvature tensors.

 
  We will show that the cohomology of the elasticity complex \eqref{elasticity} is isomorphic to six copies of the de~Rham cohomology. Therefore \eqref{elasticity} is exact when the domain is contractible, except at index 0, where the cohomology is isomorphic to the space of infinitesimal rigid body motions   
\begin{equation}\label{def:RM}
\mathcal{RM}:=\{\vec a+\vec b\times \vec x: \vec a, \vec b\in \mathbb{R}^3\}=\ker (\deff),
\end{equation}
which is a six-dimensional space.

 The next example is the {\it conformal deformation complex}:
\begin{equation}\label{conformal-complex-smooth}
\begin{tikzcd} 
0\arrow{r}& C^{\infty}\otimes \mathbb{R}^3 \arrow{r}{\dev\deff}&C^{\infty} \otimes (\mathbb{S}\cap \mathbb{T}) \arrow{r}{\cot}&C^{\infty} \otimes (\mathbb{S}\cap \mathbb{T}) \arrow{r}{\div}&C^{\infty} \otimes \mathbb{R}^3 \arrow{r}& 0. 
\end{tikzcd}
\end{equation}
Here $\mathbb{T}:=\mathbb{R}^{3\times 3}_{\mathrm{traceless}}$ is the linear space of traceless matrices, 
 $\dev \ten w:=\ten w-\frac{1}{n}\tr (\ten w)\ten I$ is the deviator and therefore $\dev\deff\vec u$ is symmetric and traceless. Moreover,  $\cot:=\curl\circ \mathcal{S}^{-1}\circ \curl\circ \mathcal{S}^{-1}\circ \curl$, where $\mathcal{S}^{-1}\ten\sigma:=\ten\sigma^{T}-\tr(\ten\sigma)\ten I$, is the linearised Cotton-York operator (tensor), which plays the role of curvature in conformal geometry. In gravitational-wave models, the variables usually satisfy the transverse-traceless (TT) gauge conditions, i.e., the variable is symmetric, traceless and divergence-free. This gauge condition is precisely encoded in \eqref{conformal-complex-smooth}. 
 
 The three complexes \eqref{deRham:3D}, \eqref{elasticity}, and \eqref{conformal-complex-smooth} can be compared from several perspectives. Firstly, these complexes involve vectors, symmetric matrices and symmetric traceless matrices, respectively. Secondly, the second operators in the three complexes are of the first, second and third order, respectively, which can all be constructed as a composition of $\curl$; these three complexes are all formally self-adjoint (see Section \ref{sec:analysis} below).   
  Thirdly, these three complexes encode topology, Riemannian geometry and conformal geometry, respectively.

\subsection{De Rham complexes: algebraic topology}

The cohomology of the de Rham complex is closely related to the topology of a domain. In particular, three concepts are linked with each other: homology (topology of a domain), cohomology (a dual concept of homology), and the de~Rham cohomology. Showing the detail of this connection in this section will not only allow us to understand the cohomology of \eqref{deRham:3D}, but will also guide the design of discretisation. 

Topology studies whether one object can be deformed to another by {\it continuous} maps. For example,  Figure \ref{fig:different-topology} depicts two domains. One cannot deform to another continuously, as there is a hole in one domain but not in another. In topology, one develops invariants to classify such objects: if a quantity is proved to be the same among all objects that can be continuously deformed to each other, then it is called a topological invariant. If two objects have different values, then they are not topologically equivalent. However, note that having the same values of the invariant does not guarantee two objects to be equivalent. 
 \begin{figure}
\begin{center}
\begin{tikzpicture}
\draw[fill=gray] (0,0) circle (1cm);
\draw[->, dashed] (0,0) circle (0.7cm);
\draw[fill=gray] (3,0) circle (1cm);
\fill[white] (3,0) circle (0.5cm);
\draw[->, dashed] (3,0) circle (0.7cm);
\end{tikzpicture}
\end{center}
\caption{Two manifolds: a disk and an annulus. One cannot continuously deform to another due to the hole in the annulus. From a homology perspective, a loop around the hole is not the boundary of any patch; while any closed loop in the disk is a boundary. }
\label{fig:different-topology}
\end{figure}
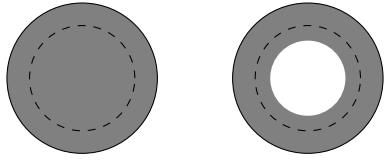

{\it Homology} is such a topological invariant. The idea of homology theory is to find out circles that are not the boundary of a disk, and generalisations to objects of any dimension (such as spheres that are not the boundary of a ball). This can be made precise by using simplicial complexes or the triangulation of a domain.  

\begin{definition}[Simplex]
Given $k+1$ points $x_{0}, x_{1}, \dots, x_{k}$ in $\mathbb{R}^{n}$ in general position (no $m+2$ points lie on a common $m$-dimensional plane), the convex hull 
\[
\sigma := [x_{0}, x_{1}, \dots, x_{k}]
\]
is called a $k$-simplex. 
\end{definition}

For example, a point is a 0-simplex ($k=0$); a line segment is a 1-simplex ($k=1$); a triangle is a 2-simplex ($k=2$); and a tetrahedron is a 3-simplex ($k=3$).

\begin{definition}[Simplicial complex]
A simplicial complex $\Sigma$ is a collection of simplices such that
\begin{enumerate}
\item if a simplex $\sigma \subset \Sigma$, then all of its faces (subsimplices) also belong to $\Sigma$; 
\item if two simplices $\sigma_{1}, \sigma_{2} \subset \Sigma$, then their intersection is either empty or a common face. 
\end{enumerate}
\end{definition}

An example of a simplicial complex is a triangulation, where all vertices, edges, faces, etc., are included. 

\begin{definition}[Oriented simplex]
An oriented $k$-simplex is a simplex together with an ordering of its vertices, denoted 
\[
[v_{0}, v_{1}, \dots, v_{k}].
\] 
Two orderings determine the same orientation if they differ by an even permutation; if they differ by an odd permutation, the orientations are opposite:
\[
[v_{\pi(0)}, v_{\pi(1)}, \dots, v_{\pi(k)}] = \operatorname{sgn}(\pi)\,[v_{0}, v_{1}, \dots, v_{k}].
\]
\end{definition}

We now define the associated chain complex
\begin{equation}\label{simplicial-complex}
\begin{tikzcd}
\cdots \arrow{r}{} & C_{i+1} \arrow{r}{\partial_{i+1}} & C_{i} \arrow{r}{\partial_{i}} & C_{i-1} \arrow{r}{} & \cdots,
\end{tikzcd}
\end{equation}
where each $C_{k}$ is the group of $k$-chains (formal linear combinations of oriented $k$-simplices), and $\partial_{k}$ is the boundary operator. The intuition that \eqref{simplicial-complex} is a complex comes from the fact that the boundary of a boundary vanishes (for instance, the boundary of a disk is a circle, and a circle has no boundary). We now make this precise.

\begin{definition}[$k$-chain]
Given a simplicial complex $\Sigma$, a $k$-chain is a finite formal linear combination of oriented $k$-simplices:
\[
\sum_{i} c_{i} \sigma_{i}, 
\qquad c_{i} \in \mathbb{R},
\]
where $\sigma_{i}:=[v_{0}^{i}, v_{1}^{i}, \dots, v_{k}^{i}].$
\end{definition}

\begin{definition}[Chain groups]
The set of all $k$-chains forms an Abelian group under addition, denoted by $C_{k}$, with
\[
\Big(\sum_{j} \alpha_{j}\sigma_{j}\Big) + \Big(\sum_{j} \beta_{j}\sigma_{j}\Big) 
= \sum_{j} (\alpha_{j}+\beta_{j})\sigma_{j}.
\]
\end{definition}

\begin{definition}[Boundary operator $\partial_{k}$]
The boundary operator is a homomorphism $\partial_{k}: C_{k}(\Sigma) \to C_{k-1}(\Sigma)$ defined on an oriented simplex by
\[
\partial_{k}[v_{0}, v_{1}, \dots, v_{k}]
= \sum_{i=0}^{k} (-1)^{i}[v_{0}, \dots, \widehat{v_{i}}, \dots, v_{k}],
\]
where the hat denotes omission of the vertex $v_{i}$. It extends linearly to all $k$-chains:
\[
\partial_{k}\!\left(\sum_{i} \alpha_{i}\sigma_{i}\right) 
= \sum_{i} \alpha_{i}\,\partial_{k}\sigma_{i}, 
\qquad \alpha_{i} \in \mathbb{R}.
\]
\end{definition}

The following result can be shown by a straightforward computation.
\begin{theorem}
Boundary of boundary vanishes, i.e., 
$\partial_{k-1}\circ \partial_{k}=0$.
\end{theorem}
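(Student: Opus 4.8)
The plan is to compute $\partial_{k-1}\circ\partial_k$ directly on an oriented simplex $[v_0,v_1,\dots,v_k]$ and show that the terms cancel in pairs; since both operators extend linearly to all chains, verifying the identity on a single oriented simplex suffices.

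First I would apply the definition of $\partial_k$ to get
\[
\partial_{k}[v_{0},\dots,v_{k}]
= \sum_{i=0}^{k} (-1)^{i}[v_{0}, \dots, \widehat{v_{i}}, \dots, v_{k}].
\]
Then I would apply $\partial_{k-1}$ to each term on the right, again by the definition of the boundary operator on a $(k-1)$-simplex, being careful to track the index shift: when the vertex $v_j$ with $j<i$ is omitted from $[v_{0},\dots,\widehat{v_{i}},\dots,v_{k}]$ it sits in position $j$, but when $v_j$ with $j>i$ is omitted it sits in position $j-1$. This yields a double sum
\[
\partial_{k-1}\partial_{k}[v_{0},\dots,v_{k}]
= \sum_{j<i}(-1)^{i}(-1)^{j}[\dots,\widehat{v_j},\dots,\widehat{v_i},\dots]
+ \sum_{j>i}(-1)^{i}(-1)^{j-1}[\dots,\widehat{v_i},\dots,\widehat{v_j},\dots].
\]
The key observation is that in the second sum one may relabel the summation indices by swapping the roles of $i$ and $j$, turning it into $\sum_{i<j}(-1)^{j}(-1)^{i-1}[\dots,\widehat{v_i},\dots,\widehat{v_j},\dots]$, which is exactly the negative of the first sum term by term. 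Hence everything cancels and $\partial_{k-1}\partial_{k}=0$.

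This proof is entirely a bookkeeping exercise, so there is no real conceptual obstacle; the only thing that requires care — and the step most likely to be botched — is the index shift in the sign tracking, namely correctly accounting for the fact that omitting an earlier vertex changes the position (and therefore the sign contribution) of a later vertex in the nested face. I would make this explicit by writing out the two cases $j<i$ and $j>i$ separately before combining them, rather than trying to handle both at once.
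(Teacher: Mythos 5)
Your proof is correct and is exactly the ``straightforward computation'' the paper alludes to (the paper only states the result, notes it follows by direct computation, and illustrates it on a triangle). Your careful handling of the index shift — the sign $(-1)^{j-1}$ when omitting a later vertex from a face that has already lost an earlier one — is precisely the point that makes the pairwise cancellation work, so the argument is complete.
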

For example, consider
$
T=[x_{0}, x_{1}, x_{2}]. 
$
Then 
$$
\partial T = [x_{1}, x_{2}]-[x_{0}, x_{2}]+ [x_{0}, x_{1}],
$$ 
 and
$$
\partial\partial T = (x_{2}-x_{1})-(x_{2}-x_{0})+(x_{1}-x_{0})=0.
$$
\begin{center}
  \begin{tikzpicture}[scale=0.7]
    \coordinate (1) at (0,0);
    \coordinate (2) at (4,0);
    \coordinate (3) at (2,3);

    \fill[gray] (1) -- (2) -- (3) -- cycle;

    \draw (1) -- (2) -- (3) -- cycle;

    \node[below left] at (1) {$x_0$};
    \node[below right] at (2) {$x_1$};
    \node[above] at (3) {$x_2$};
    
        \coordinate (A) at (6,0);
    \coordinate (B) at (10,0);
    \coordinate (C) at (8, 3);


    \draw[->] (A) -> (B) ;
      \draw[->] (B) -> (C) ;
      \draw[->] (C) -> (A) ;
    \node[below left] at (A) {$x_0$};
    \node[below right] at (B) {$x_1$};
    \node[above] at (C) {$x_2$};
    
\end{tikzpicture}

\end{center}
 
The simplicial homology is defined to be the quotient space $H_{i}:=\ker(\partial_{i})/\ran(\partial_{i+1})$.
The $k$-th {Betti number is defined to be the dimension of the $k$-th homology space $ b_{k}:=\dim H_{k}$, i.e.,  the dimension of $k$-dimensional closed loops that are not boundaries. 
Here are some examples of homology:
\begin{itemize}
\item disk  $D^{2}\subset \mathbb R^{2}$: $b_{0}=1$ (boundary of any point vanishes),  $b_{1}=0$, $b_{2}=0$. 
\item sphere $S^{2}$: $b_{0}=1$,  $b_{1}=0$, $b_{2}=1$ (boundary of a sphere vanishes). 
\end{itemize}

\begin{tcolorbox}[
    enhanced, breakable,
    toggle enlargement=evenpage,
    colback=red!5!white,colframe=red!75!black,
    title={A crash course on differential forms}]
{\noindent\bf Cartan's Formalism.}
Differential forms arise naturally when trying to generalise the concept of oriented area (and higher-dimensional volumes). For instance, consider the integral
\[
\int_{\Omega} f(x, y)\, dx\,dy.
\]
From the exterior calculus perspective, the infinitesimal area element \(dx\,dy\) can be interpreted as the wedge product \(dx \wedge dy\), which satisfies \(dx \wedge dy = -\,dy \wedge dx\). This skew-symmetry carries the orientation of the area element.

{\noindent\bf Exterior Calculus Perspective.}
A \emph{\(k\)-form} in \(n\)-dimensional space is a linear combination of terms of the form 
\[
dx^{j_1} \wedge dx^{j_2} \wedge \cdots \wedge dx^{j_k}, \quad j_1, \dots, j_k \le n,
\]
where each term is fully skew-symmetric:
\[
dx^{j_1}\wedge \cdots \wedge dx^{j_i} \wedge \cdots \wedge dx^{j_\ell} \wedge \cdots \wedge dx^{j_k}
= -\,dx^{j_1}\wedge \cdots \wedge dx^{j_\ell} \wedge \cdots \wedge dx^{j_i}\wedge \cdots \wedge dx^{j_k}.
\]
In particular, \(dx^i \wedge dx^i = 0\), and 
\[
dx^i \wedge dx^j = -\,dx^j \wedge dx^i.
\]
Because of this skew-symmetry, any \(k\)-form with \(k>n\) must vanish in \(\mathbb{R}^n\). 

For example, in \(\mathbb{R}^{3}\), the following sets form bases for the space of \(k\)-forms \(\Alt^{k}\mathbb{R}^{3}\):

\begin{center}
\begin{tabular}{|c|c|}
\hline
 \(k\) & Basis for \(\Alt^{k}\mathbb{R}^{3}\)\\\hline
 0 & \(1\)\\
 1 & \(dx^{1}, \; dx^{2}, \; dx^{3}\)\\
 2 & \(dx^{1}\wedge dx^{2}, \; dx^{2}\wedge dx^{3}, \; dx^{3}\wedge dx^{1}\)\\
 3 & \(dx^{1}\wedge dx^{2}\wedge dx^{3}\)\\
\hline
\end{tabular}
\end{center}

A differential $k$-form is an alternating $k$-form with function coefficients, i.e., any differential $k$-form $\omega$ can be written as a linear combination
$$
\omega=\sum_{j_{1}, \cdots, j_{k}}\sigma_{j_{1}, \cdots, j_{k}}(x)dx^{j_{1}}\wedge \cdots \wedge dx^{j_{k}}.
$$

 {\noindent\bf Proxies in 3D.}
A \(0\)-form is simply a scalar function. A \(1\)-form \(\omega\) of the form 
\[
\omega = w_{1} \, dx^{1} + w_{2} \, dx^{2} + w_{3} \, dx^{3}
\]
corresponds to the vector \((w_{1}, w_{2}, w_{3})\) in \(\mathbb{R}^3\).  
A \(2\)-form \(v\) given by
\[
v = v_{1}\,dx^{2}\wedge dx^{3} + v_{2}\,dx^{3}\wedge dx^{1} + v_{3}\,dx^{1}\wedge dx^{2}
\]
has the proxy vector \((v_{1}, v_{2}, v_{3})\). 
Finally, any \(3\)-form in \(\mathbb{R}^{3}\) must be a multiple of \(dx^{1}\wedge dx^{2}\wedge dx^{3}\), which corresponds to a single scalar.

We summarise this correspondence in the following table, where we slightly abuse notation by using the same symbol (\(\omega\) or \(v\)) to denote both the differential form and its vector (or scalar) proxy:

\begin{center}
\begin{tabular}{|c|c|c|}
\hline
 \(k\) & \quad Differential form \quad & \quad Proxy \quad\\\hline
 0 & \(f\) & \(f\) \\
 1 & \(w = w_{1}\,dx^{1} + w_{2}\,dx^{2} + w_{3}\,dx^{3}\) & \((w_{1},\, w_{2},\, w_{3})\) \\
 2 & \(v = v_{1}\,dx^{2}\wedge dx^{3} + v_{2}\,dx^{3}\wedge dx^{1} + v_{3}\,dx^{1}\wedge dx^{2}\) & \((v_{1},\, v_{2},\, v_{3})\) \\
 3 & \(u \, dx^{1}\wedge dx^{2}\wedge dx^{3}\) & \(u\) \\
\hline
\end{tabular}
\end{center}

{\noindent\bf Wedge Products in 3D.} The {\it wedge product} of a $k$-form and an $\ell$-form is a $(k+\ell)$-form. For 1-form bases, define $(f\,dx^{i})\wedge (g\,dx^{j})=fg\, dx^{i}\wedge dx^{j}$. Similar rules apply to general cases. 
Let \(u = u_{1}\,dx^{1} + u_{2}\,dx^{2} + u_{3}\,dx^{3}\) and \(v = v_{1}\,dx^{1} + v_{2}\,dx^{2} + v_{3}\,dx^{3}\) be \(1\)-forms in \(\Alt^{1}\mathbb{R}^3\). Also let 
\begin{equation}\label{eqn:1formwedge1form}
w = w_{1}\,dx^{2}\wedge dx^{3} + w_{2}\,dx^{3}\wedge dx^{1} + w_{3}\,dx^{1}\wedge dx^{2}
\end{equation}
be a \(2\)-form in \(\Alt^{2}\mathbb{R}^3\). Then:
\[
u \wedge v 
= 
(u_{1}v_{2} - u_{2}v_{1})\,dx^{1}\wedge dx^{2} 
+ 
(u_{2}v_{3} - u_{3}v_{2})\,dx^{2}\wedge dx^{3} 
+ 
(u_{3}v_{1} - u_{1}v_{3})\,dx^{3}\wedge dx^{1},
\]
and
\[
u \wedge w 
= 
(u_{1}w_{1} + u_{2}w_{2} + u_{3}w_{3}) \,dx^{1}\wedge dx^{2}\wedge dx^{3}.
\]
In \eqref{eqn:1formwedge1form}, the term $(u_{1}v_{2} - u_{2}v_{1})\,dx^{1}\wedge dx^{2} $ is because we have two terms $u_{1}dx^{1}\wedge v_{2}dx^{2}$ and $u_{2}dx^{2}\wedge v_{1}dx^{1}=-u_{2}v_{1}dx^{1}\wedge dx^{2}$. The computation leading to other terms is similar.
In vector proxy form, these translate to cross and dot products, respectively:
\[
\begin{aligned}
u \wedge v \quad &\longleftrightarrow \quad \mathbf{u} \times \mathbf{v},\\
u \wedge w \quad &\longleftrightarrow \quad \mathbf{u} \cdot \mathbf{w},
\end{aligned}
\]
where \(\mathbf{u}, \mathbf{v}, \mathbf{w}\) denote the respective vector proxies.

More generally, combining different degrees of forms yields the following wedge-product behavior (abusing notation as before):

\begin{center}
\begin{tabular}{|c|c|c|c|c|}
\hline
\multicolumn{5}{|c|}{$\omega \wedge \mu$}\\
\hline
 & $\ell=0$ & $\ell=1$ & $\ell=2$ & $\ell=3$\\
\hline
 $k=0$ & $\omega\mu$ & $\omega \boldsymbol{\mu}$ & $\omega \boldsymbol{\mu}$ & $\omega \mu$\\
 $k=1$ & $\boldsymbol{\omega}\mu$ & $\boldsymbol{\omega}\times \boldsymbol{\mu}$ & $\boldsymbol{\omega}\cdot \boldsymbol{\mu}$ & 0\\
 $k=2$ & $\boldsymbol{\omega}\mu$ & $\boldsymbol{\omega}\cdot \boldsymbol{\mu}$ & 0 & 0\\
 $k=3$ & $\omega\mu$ & 0 & 0 & 0\\
\hline
\end{tabular}
\end{center}

{\noindent\bf Exterior Derivatives.}
For differential forms, we define the exterior derivatives \(d^{k}: \Lambda^{k}(\Omega)\to \Lambda^{k+1}(\Omega)\). In coordinates, if
\[
\omega = \sum_{1\leq \sigma_{1}\leq \cdots\leq \sigma_{k}\leq n} f_{\sigma}\,dx^{\sigma_{1}}\wedge \cdots \wedge dx^{\sigma_{k}},
\]
then
\[
d^{k}\omega 
= 
\sum_{1\leq \sigma_{1}\leq \cdots\leq \sigma_{k}\leq n}
\frac{\partial f_{\sigma}}{\partial x^{j}}\,dx^{j}\wedge dx^{\sigma_{1}}\wedge \cdots \wedge dx^{\sigma_{k}}.
\]
In \(\mathbb{R}^3\), this definition matches up with the standard vector calculus operators when using the proxy identifications:

\begin{itemize}
\item For a \(0\)-form \(u\),
\[
d^{0}u = \frac{\partial u}{\partial x^{1}}\,dx^{1} + \frac{\partial u}{\partial x^{2}}\,dx^{2} + \frac{\partial u}{\partial x^{3}}\,dx^{3},
\]
which corresponds to the gradient \(\nabla u\) in vector calculus.

\item For a \(1\)-form 
\[
v = v_{1}\,dx^{1} + v_{2}\,dx^{2} + v_{3}\,dx^{3},
\]
\[
d^{1}v 
=
\frac{\partial v_{1}}{\partial x^{2}}\,dx^{2}\wedge dx^{1}
+
\frac{\partial v_{1}}{\partial x^{3}}\,dx^{3}\wedge dx^{1}
+
\frac{\partial v_{2}}{\partial x^{1}}\,dx^{1}\wedge dx^{2}
+
\frac{\partial v_{2}}{\partial x^{3}}\,dx^{3}\wedge dx^{2}
+
\frac{\partial v_{3}}{\partial x^{1}}\,dx^{1}\wedge dx^{3}
+
\frac{\partial v_{3}}{\partial x^{2}}\,dx^{2}\wedge dx^{3}.
\]
Using \(dx^{i}\wedge dx^{j}=-\,dx^{j}\wedge dx^{i}\) (and \(dx^{i}\wedge dx^{i}=0\)), we can rewrite this as
\[
d^{1}v 
=
\Bigl(\frac{\partial v_{2}}{\partial x^{1}} - \frac{\partial v_{1}}{\partial x^{2}}\Bigr)\,dx^{1}\wedge dx^{2}
+
\Bigl(\frac{\partial v_{3}}{\partial x^{2}} - \frac{\partial v_{2}}{\partial x^{3}}\Bigr)\,dx^{2}\wedge dx^{3}
+
\Bigl(\frac{\partial v_{1}}{\partial x^{3}} - \frac{\partial v_{3}}{\partial x^{1}}\Bigr)\,dx^{3}\wedge dx^{1},
\]
which corresponds to \(\nabla \times \mathbf{v}\) in the vector proxy.

\item It remains as an exercise to show that for a \(2\)-form in \(\mathbb{R}^3\), the exterior derivative \(d^{2}\) corresponds to the divergence operator \(\nabla \cdot\).
\end{itemize}

\end{tcolorbox}

Having introduced chains, boundaries, and homology, we now move to the dual picture. 
Instead of working directly with simplices and their boundaries, we can assign to each \(k\)-simplex a real value and track how these “dual values” change under a dual operator called the \emph{coboundary}. This leads to \emph{cochains} and \emph{cohomology}, which capture the same topological information as homology but often prove more convenient for certain theoretical and computational tasks.

\begin{definition}[$k$-cochain]
Let \(\Sigma\) be a simplicial complex. A \emph{\(k\)-cochain} is a function that assigns a real value to each \(k\)-simplex of \(\Sigma\). Formally,
\[
\varphi: \{\text{\(k\)-simplices in }\Sigma\} \,\to\, \mathbb{R}.
\]
The set of all such \(k\)-cochains is denoted by \(C^k(\Sigma)\). 
\end{definition}

\begin{definition}[coboundary operator]
The coboundary operator 
\[
\delta_k: C^k(\Sigma)\,\to\, C^{k+1}(\Sigma)
\]
is defined so that for each \( (k+1)\)-simplex \(\sigma\), 
\[
(\delta_k \varphi)(\sigma) = \varphi\bigl(\partial_{k+1}\sigma\bigr),
\]
where \(\partial_{k+1}\sigma\) is the boundary of \(\sigma\) considered as a \(k\)-chain. Concretely, if 
\(\sigma = [v_0, v_1, \dots, v_{k}, v_{k+1}]\) then
\[
(\delta_k \varphi)(\sigma)
=
\sum_{i=0}^{k+1} (-1)^i \,\varphi\bigl([v_0,\dots,\widehat{v_i},\dots,v_{k+1}]\bigr),
\]
matching   the definition of the boundary \(\partial_{k+1}\) reflected in the simplex that  \(\varphi\) acts on.
\end{definition}

\noindent
As a direct consequence of the properties of the boundary operator of chains, we have
\[
\delta_{k+1}\,\delta_{k} = 0, \quad\forall k.
\]
Hence we get a cochain complex
\[
\begin{tikzcd}
\cdots \ar[r] & C^k(\Sigma) \ar[r, "\delta_k"] & C^{k+1}(\Sigma) \ar[r, "\delta_{k+1}"] & C^{k+2}(\Sigma) \ar[r] & \cdots.
\end{tikzcd}
\]

\begin{definition}[cohomology groups]
The  {\(k\)-th cohomology group} is defined by
\[
H^k(\Sigma) = {\ker(\delta_k)}/{\ran(\delta_{k-1})}.
\]
Namely, \(k\)-cohomology classes capture those cochains whose coboundary vanishes (i.e.\ \(\ker \delta_k\)) but which themselves are not a coboundary of some \((k-1)\)-cochain.
\end{definition}

The homology, cohomology (of cochains), and the de~Rham cohomology are related to each other. The homology and cohomology are related through the Poincar\'e duality $H^k(\Omega) \cong H_{n-k}(\Omega)$ for compact manifolds without boundary (and the Poincar\'e--Lefschetz duality $H^k(\Omega, \partial \Omega) \cong H_{n-k}(\Omega)$ and $H^k(\Omega) \cong H_{n-k}(\Omega, \partial \Omega)$ for manifolds with boundary). Here $H^k(\Omega, \partial \Omega)$ is the so-called relativity cohomology, for which all the cochains lying on the boundary $\partial \Omega$ are treated as zero. Finally, cohomology and the de~Rham cohomology are related through the de~Rham map. More precisely, a differential $k$-form $\omega$ induces a $k$-cochain: given a $k$-simplex $\sigma$, the cochain maps it to the integral $\int_{\sigma}\omega$. The \emph{de Rham theorem} states that for a smooth manifold $\Omega$, 
$
H^k_{\mathrm{dR}}(\Omega; \mathbb{R})
\cong
H^k(\Omega; \mathbb{R})
$
and 
$
H^k_{\mathrm{dR}}(\Omega, \partial \Omega; \mathbb{R})
\cong
H^k(\Omega, \partial \Omega; \mathbb{R}).
$
Through the above results, we can directly relate de~Rham cohomology to the topology of a domain. Particularly, we consider two versions of de~Rham complexes
\begin{equation}\label{deRham-Cinfty}
\begin{tikzcd}
0 \ar[r] & C^{\infty}\Lambda^0 \ar[r, "d"] &C^{\infty}\Lambda^1 \ar[r, "d"] & \cdots \ar[r, "d"] & C^{\infty}\Lambda^n \ar[r] & 0,
\end{tikzcd}
\end{equation}
\begin{equation}\label{deRham-Cinfty0}
\begin{tikzcd}
0 \ar[r] & C_{0}^{\infty}\Lambda^0 \ar[r, "d"] &C_{0}^{\infty}\Lambda^1 \ar[r, "d"] & \cdots \ar[r, "d"] & C_{0}^{\infty}\Lambda^n \ar[r] & 0,
\end{tikzcd}
\end{equation}
where $C_{0}^{\infty}\Lambda^k$ denotes the space of compactly supported $k$-forms (in coordinates, the coefficients are in $C_{0}^{\infty}$).
The cohomology of \eqref{deRham-Cinfty} is isomorphic to $ H_{n-k}(\Omega, \partial \Omega)$; the cohomology of \eqref{deRham-Cinfty0} is isomorphic to $H_{n-k}(\Omega)$. This result can be seen in some special cases. Let $\Omega$ be a contractible domain (e.g., a ball). Then for \eqref{deRham-Cinfty}, the 0-th cohomology is isomorphic to $ H_{n}(\Omega, \partial \Omega)\cong \mathbb{R}$; and the $n$-th cohomology of \eqref{deRham-Cinfty0} is  $ H_{0}(\Omega)\cong \mathbb{R}$. The former reflects that the kernel of $\grad$ consists of constants, and the latter reflects the fact that the divergence of a compactly supported field has vanishing integral (thus $\div: [C_{0}^{\infty}(\Omega)]^{n}\to C_{0}^{\infty}$ has a one-dimensional cokernel).



 \subsection{Complexes from complexes}
 
 More complexes can be derived from the de~Rham complex. The simplest case would be deriving a complex with a second-order operator from two de~Rham complexes in 1D \eqref{deRham1D}. We connect the tail of the one complex to the head of another one:
    \begin{equation} \label{BGGdiagram-1D}
\begin{tikzcd}
0 \arrow{r} &C^{\infty} (I) \arrow{r}{\frac{d}{dx}} &C^{\infty} (I)\arrow{r}{} & 0\\
0 \arrow{r} &C^{\infty}(I)\arrow{r}{\frac{d}{dx}} \arrow[ur, "I"]&C^{\infty}(I) \arrow{r}{} & 0.
 \end{tikzcd}
\end{equation}
This derives
   \begin{equation}\label{cplx:1DBGG} 
\begin{tikzcd}
0 \arrow{r} &C^{\infty}(I)  \arrow{r}{\frac{d^{2}}{dx^{2}}} &C^{\infty}(I)\arrow{r}{} & 0.
 \end{tikzcd}
\end{equation}
We can make some observations even in this simple example. We observe that the cohomology of the derived complex \eqref{cplx:1DBGG} is isomorphic to the input -- the sum of the cohomology of the two rows in \eqref{BGGdiagram-1D}. First, $\ker(\frac{d^{2}}{dx^{2}})\cong \mathbb{R}^{2}=\mathbb{R}\oplus \mathbb{R}\cong\ker(\frac{d}{dx})\oplus \ker(\frac{d}{dx})$, which is the cohomology at the first $C^{\infty}(I)$ space in \eqref{cplx:1DBGG}. Second, $\frac{d}{dx}: C^{\infty}(I)\to C^{\infty}(I)$ in both rows of \eqref{BGGdiagram-1D} are surjective. This implies that $\frac{d^{2}}{dx^{2}}$ is also surjective. The second observation is that to generalise \eqref{BGGdiagram-1D} to more general function spaces, such as Sobolev spaces, restrictions will be added to such spaces. For example, a natural generalisation of \eqref{BGGdiagram-1D} to Sobolev spaces is 
    \begin{equation} \label{BGGdiagram-sobolev}
\begin{tikzcd}
0 \arrow{r} &H^{2} (I) \arrow{r}{\frac{d}{dx}} &H^{1} (I)\arrow{r}{} & 0\\
0 \arrow{r} &H^{1}(I)\arrow{r}{\frac{d}{dx}} \arrow[ur, "I"]&L^{2}(I) \arrow{r}{} & 0.
 \end{tikzcd}
\end{equation}
The second row has lower regularity than the first as the first space of the second row is connected to the last space in the first row, and the regularity of spaces decrease in a de~Rham complex.
 
A nontrivial example is the derivation of the elasticity complex \eqref{elasticity}. 

The first step is to
 {\it connect complexes}. We fit two vector-valued de~Rham complexes in a commuting diagram:
{
  \begin{equation} \label{elasticity-diagram}
\begin{tikzcd}
0 \arrow{r} &C^{\infty}\otimes \mathbb{R}^{3} \arrow{r}{\grad} &C^{\infty}\otimes\mathbb{R}^{3\times 3} \arrow{r}{\curl} &C^{\infty}\otimes\mathbb{R}^{3\times 3} \arrow{r}{\div} &C^{\infty}\otimes\mathbb{R}^{3}  \arrow{r}{} & 0\\
0 \arrow{r} &C^{\infty}\otimes\mathbb{R}^{3}\arrow{r}{\grad} \arrow[ur, "-\mskw"]&C^{\infty}\otimes\mathbb{R}^{3\times 3} \arrow{r}{\curl} \arrow[ur, "\mathcal{S}"]&C^{\infty}\otimes\mathbb{R}^{3\times 3} \arrow{r}{\div}\arrow[ur, "2\vskw"] &C^{\infty}\otimes \mathbb{R}^{3}\arrow{r}{} & 0
 \end{tikzcd}
\end{equation}
}
Here a vector-valued de~Rham complex in 3D means that we start with a vector-valued function space $C^{\infty}\otimes \mathbb{R}^{3}$ (a vector with three components, each component being a $C^{\infty}$ function). Then taking column-wise $\grad$, $\curl$ and $\div$, we obtain a complex with vectors, matrices, matrices, and vectors. The operators connecting the two rows are algebraic: $\mskw$ maps a vector to a skew-symmetric matrix in 3D and $\vskw$ is the inverse which takes the skew-symmetric part of a matrix and identify it with a vector:
$$
\mskw\left ( 
\begin{array}{c}
a\\b\\c
\end{array}
\right ):=\left ( 
\begin{array}{ccc}
0 & -c & b\\
c & 0 & -a\\
-b & a&0
\end{array}
\right ), \quad \vskw \sigma:=\mskw^{-1}\circ \skw \sigma.
$$
The operator in the middle is 
$\mathcal{S}\ten u:=\ten u^{T}-\tr(\ten u)\ten I$. Note that $\mskw$, $\mathcal{S}$ and $\vskw$ are surjective, bijective, and injective, respectively.

The second step is to {\it eliminate as much as possible in the diagram}. This means that we eliminate spaces or components connected by the connecting maps. In the above example, we decompose each matrix into its symmetric and skew-symmetric parts:
{
  \begin{equation*} 
\begin{tikzcd}
0 \arrow{r} &\mathbb{R}^{3} \arrow{r}{\grad} &\mathbb{S}+{\bf\color{blue}\mathbb{K}} \arrow{r}{\curl} &{\bf\color{red}\mathbb{R}^{3\times 3}  }\arrow{r}{\div} &{\bf\color{brown} \mathbb{R}^{3} } \arrow{r}{} & 0\\
0 \arrow{r} &{\bf\color{blue}\mathbb{R}^{3}}\arrow{r}{\grad} \arrow[ur, "-\mskw"]&{\bf\color{red}\mathbb{R}^{3\times 3}} \arrow{r}{\curl} \arrow[ur, "\mathcal{S}"]&\mathbb{S}+{\bf\color{brown}\mathbb{K} }\arrow{r}{\div}\arrow[ur, "2\vskw"] & \mathbb{R}^{3}\arrow{r}{} & 0.
 \end{tikzcd}
\end{equation*}
Here $\mathbb{S}$ denotes symmetric matrices and $\mathbb{K}$ denotes skew-symmetric matrices. 
To emphasise the algebraic structures, we have omitted $C^{\infty}\otimes$ in the notation, and thus $\mathbb{R}^{3}$ means $C^{\infty} \otimes \mathbb{R}^{3}$ etc. The second (nonzero) space on the top row is divided into $\mathbb{S}$ and $\mathbb{K}$. The $\mathbb{K}$ part matches $\mathbb{V}$ from the bottom row by the incoming $\mskw$. Then we eliminate $\mathbb{V}$ and $\mathbb{K}$ from the diagram. Similarly, the second last space in the bottom row splits into $\mathbb{S}$ and $\mathbb{K}$. The $\mathbb{K}$ part matches   $\mathbb{V}$ at the end of the top row. They are both eliminated from the diagram. 
In the middle, $\mathcal{S}$ is surjective. Therefore the two spaces connected by it are completed removed from the diagram. 
What we are left with is the following:
  \begin{equation*} 
\begin{tikzcd}
0  & \mathbb{R}^{3}  & \mathbb{S}  & &~  &~ \\
 ~&~&~ & \mathbb{S}   &  \mathbb{R}^{3}   & 0.
 \end{tikzcd}
\end{equation*}
}
Then the final step of reading out the derived complex is to {\it connect rows by a zig-zag}:
{
  \begin{equation*} 
\begin{tikzcd}[sep=large]
0 \arrow{r}& \mathbb{R}^{3}\arrow{r}{\deff} & \mathbb{S}  \arrow{r}{\curl}&~\arrow[dl, "\mathcal{S}^{-1}"] &~  &~ \\
 ~&~&~\arrow{r}{\curl}& \mathbb{S} \arrow{r}{\div} &  \mathbb{R}^{3} \arrow{r}{} & 0.
 \end{tikzcd}
\end{equation*}
}
Recall again that here $\mathbb{R}^{3}$ means $C^{\infty}\otimes \mathbb{R}^{3}$ etc. for short.

A major conclusion is that, under some conditions (which hold for all the examples in this paper), {\it the above algebraic process of eliminating and connecting is cohomology-preserving}: the cohomology of the output complex is isomorphic to the input. In the 1D example \eqref{cplx:1DBGG}, the cohomology is two copies of the de~Rham cohomology; in the 3D elasticity example \eqref{elasticity}, the cohomology is six copies of the de~Rham cohomology (each row in the diagram \eqref{elasticity-diagram} consists of three copies of the de~Rham complex, and therefore in total six copies). In particular, this means that the dimension of the cohomology of \eqref{elasticity} is six times the corresponding Betti number. The kernel of $\deff$ has dimension six (the explicit form \eqref{def:RM} gives a straightforward verification).

The 3D elasticity complex \eqref{elasticity} is an example of a family of (infinitely many) complexes. The diagram \eqref{elasticity-diagram} can be generalised with the following diagram, where $\Alt^{k, \ell}:=\Alt^{k}\otimes \Alt^{\ell}$ is the space of {\it alternating $\ell$-form-valued $k$-forms} (skew-symmetric multilinear maps which take in $k$ vectors and return an alternating $\ell$-form), and again, we omitted $C^{\infty}\otimes$ in each space:
  \begin{equation} \label{form-diagram}
\begin{tikzcd} 
0 \arrow{r} &   \mathrm{Alt}^{0, J-1}  \arrow{r}{d} & \mathrm{Alt}^{1, J-1}  \arrow{r}{d} & \cdots \arrow{r}{d} &   \mathrm{Alt}^{n, J-1} \arrow{r}{} & 0\\
0 \arrow{r} &  \mathrm{Alt}^{0, J}  \arrow{r}{d} \arrow[ur, "S^{0,J}"] & \mathrm{Alt}^{1, J}  \arrow{r}{d}  \arrow[ur, "S^{1,J}"] & \cdots \arrow{r}{d}  \arrow[ur, "S^{n-1,J}"] &   \mathrm{Alt}^{n, J} \arrow{r}{} & 0.
 \end{tikzcd}
\end{equation}
The connecting maps are naturally given by an alternating sum:
\begin{multline*}
S^{i, J}\mu (v_{0},\cdots,  v_{i})(w_{1}, \cdots, w_{J-1}):=\sum_{l=0}^{i}(-1)^{l} \mu (v_{0},\cdots, \widehat{v_{l}}, \cdots,  v_{i})(v^{l}, w_{1}, \cdots, w_{J-1}), \\ \forall v_{0}, \cdots, v_{i}, w_{1}, \cdots, w_{J-1} \in \mathbb{R}^{n}.
\end{multline*}
These maps satisfy the anti-commutativity $d\circ S=-S\circ d$.

In $n$D, all the $k$-forms for $k>n$ vanish. Therefore the diagram has finite numbers of rows and columns. We can write down all the cases in 3D:
$$
\begin{tikzcd}
{ 0} \arrow{r} &\mathrm{Alt}^{0, 0 } \arrow{r}{{d^{0}}} &\mathrm{Alt}^{1, 0}  \arrow{r}{d^{1}} &\mathrm{Alt}^{2, 0} \arrow{r}{d^{2}} & \mathrm{Alt}^{3, 0} \arrow{r}{} & {0}\\{
0} \arrow{r}&\mathrm{Alt}^{0, 1}\arrow{r}{d^{0}} \arrow[ur, "{S^{0, 1}}"]&\mathrm{Alt}^{1, 1}\arrow{r}{d^{1}} \arrow[ur, "{S^{1, 1}}"]&\mathrm{Alt}^{2, 1}\arrow{r}{d^{2}}\arrow[ur, "S^{2, 1}"] &\mathrm{Alt}^{3, 1}\arrow{r}{} &{ 0}\\
{0} \arrow{r} &\mathrm{Alt}^{0, 2}\arrow{r}{d^{0}} \arrow[ur, "S^{0, 2}"]&\mathrm{Alt}^{1, 2} \arrow{r}{d^{1}} \arrow[ur, "{S^{1, 2}}"]&\mathrm{Alt}^{2, 2}\arrow{r}{{d^{2}}}\arrow[ur, "S^{2, 2}"] & \mathrm{Alt}^{3, 2} \arrow{r}{} &{0}\\
{0} \arrow{r} &\mathrm{Alt}^{0, 3}\arrow{r}{d^{0}} \arrow[ur, "S^{0, 3}"]&\mathrm{Alt}^{1, 3} \arrow{r}{d^{1}} \arrow[ur, "{S^{1, 3}}"]&\mathrm{Alt}^{2, 3}\arrow{r}{{d^{2}}}\arrow[ur, "S^{2, 3}"] & \mathrm{Alt}^{3, 3} \arrow{r}{} &{0}.
 \end{tikzcd}
$$
Here each space $\Alt^{k, \ell}$ has two indices $k$ and $\ell$. In vector/matrix proxies, we can think of the $k$-index as rows and $\ell$ as columns of a matrix.  Then we obtain proxies of  $\Alt^{k, \ell}$ as the tensor product between scalars and vectors. For example, for  $\Alt^{0, 1}$, $\Alt^{0}$ corresponds to a scalar and $\Alt^{1}$ corresponds to the vector, leading to $\Alt^{k, \ell}$ as a vector; $\Alt^{2}$ corresponds to a vector, and therefore  $\Alt^{2, 2}$ gives a matrix. Connecting the first two rows, we obtain a Hessian operator, leading to the Hessian complex \cite{arnold2021complexes}. The second and third rows lead to the elasticity complex \eqref{elasticity-diagram}, and the last two rows lead to the $\div\div$ complex \cite{arnold2021complexes}.

\medskip

\subsection{Twisted complex: Riemann-Cartan geometry}

It worth diving deeper into the derivation of the BGG complexes for at least two reasons (in addition to the insights it provides in understanding the cohomology of complexes). First, an intermediate step in the derivation, called the {\it twisted de~Rham complexes}, have important and a bit surprising applications in their own right. Second, the derivation leads to a general {\it machinery} which can be used in more general contexts: {\it we can carry over results for the de~Rham complex (electromagnetic fields etc.) to objects modelled by the BGG or the twisted complexes (strain and stress tensors in continuum mechanics, metric and curvature tensors in differential geometry etc.) by chasing the diagrams. }

A {BGG diagram} consists of complexes connected by algebraic operators $S^{\bs}$ in a (anti)commuting diagram ($dS=-Sd$):
 \begin{equation}\label{BGG-diagram}
\begin{tikzcd}[column sep=2.1cm, row sep=0.8cm]
{\cdots} \arrow{r} &V^{k-2}  \arrow{r}{d^{k-2}} &V^{k-1} \arrow{r}{d^{k-1}} &V^{k} \arrow{r}{d^{k}} & V^{k+1} \arrow{r}{} &\cdots\\
\cdots \arrow{r}& W^{k-2}\arrow{r}{d^{k-2}} \arrow[ur, "{S^{k-2}}"]&{ W^{k-1} } \arrow{r}{{d^{k-1}}} \arrow[ur, "{S^{k-1}}"]&{  W^{k}} \arrow{r}{{\div}}\arrow[ur, "{S^{k}}"] &{W^{k+1} }\arrow{r}{} &\cdots
 \end{tikzcd}
\end{equation}
In typical applications, there exists an index $J$ such that the first several $S^{k}$ operators are injective for $k\leq J$, and the last several are surjective for $k\geq J$, with a bijective $S^{J}$ in the middle. 

We can read out two kinds of complexes from the BGG diagram \eqref{BGG-diagram}:
the twisted complex
{ \begin{equation}\label{cplx:twisted}
  \begin{tikzcd}[ampersand replacement=\&, column sep=2.1cm]
\cdots\arrow{r}\&
\left ( \begin{array}{c}
V^{k-1} \\
W^{k-1}
 \end{array}\right )
 \arrow{r}{
 \begin{pmatrix} d^{k-1} & -S^{k-1} \\ 0 & d^{k-1} \end{pmatrix}
 }\&  \left ( \begin{array}{c}
V^{k}  \\
W^{k}
 \end{array}\right ) \arrow{r}{
 \begin{pmatrix} d^{k} & -S^{k} \\ 0 & d^{k} \end{pmatrix}
 } \&\left ( \begin{array}{c}
V^{k+1}   \\
W^{k+1}
 \end{array}\right )  \arrow{r}{} \&\cdots,
\end{tikzcd}
\end{equation}}
and the following BGG complex  is obtained by eliminating components connected by $S^{\bs}$:
\begin{equation} \label{BGG-seq}
\begin{tikzcd} 
\cdots\ar[r]   & \ker(S_{\dagger}^{J-2}) \ar[r,"Q\circ d"] & \ker(S_{\dagger}^{J-1})   \ar[r,"Q\circ d"] &  \ker(S_{\dagger}^{J}) \ar[r,"d"] & ~ \ar[ld,"\mathcal S^{-1}"] \\
 & &  &   \ar[r,"d"] &   \ker(S^{J+1})\ar[r,"d"] &  \ker(S^{J+2})  \ar[r,"d"] &   \cdots,
\end{tikzcd}
\end{equation}
where $S_{\dagger}^{k}: V^{k}\to W^{k-1}$ is the adjoint of $S^{k-1}: W^{k-1}\to V^{k}$ (for example, $\vskw$ is the adjoint of $\mskw$), and $Q$ is the projection to $\ker(S_{\dagger})$ (for example, if $S$ is $\mskw$, then $S_{\dagger}$ is $\vskw$, and $Q$ is the symmetrisation of a matrix). Note that \eqref{cplx:twisted} is a complex, i.e., 
$$
\left( \begin{array}{cc}
d & -S\\
0 & d
 \end{array}\right ) \circ \left( \begin{array}{cc}
d & -S\\
0 & d
 \end{array}\right ) =0,
$$
since $dS=-Sd$. The anti-commutativity also implies that $d^{k}$ maps $ \ker(S^{k})$ to  $ \ker(S^{k+1})$ in \eqref{BGG-seq}.


 The derivation of the 1D BGG complex and the 3D elasticity complex is summarised in the following diagrams:
   \begin{equation*}
  \adjustbox{scale=1,center}{%
\begin{tikzcd}[ampersand replacement=\&, column sep=1in]
0\arrow{r}{}\& \left (
\begin{array}{c}
{C^{\infty}}\\
{C^{\infty}} 
\end{array}
\right )\arrow{d}{\mathcal{F}^{0}}
\arrow{r}{\left (
\begin{array}{cc}
\frac{d}{dx} & 0\\
0&\frac{d}{dx}
\end{array}
\right ) } \&\left (
\begin{array}{c}
{C^{\infty}}  \\
{C^{\infty}} 
\end{array}
\right )\arrow{d}{\mathcal{F}^{1}}\arrow{r}{} \& 0\\
0\arrow{r}{}\& \left (
\begin{array}{c}
{C^{\infty}}\\
{C^{\infty}} 
\end{array}
\right )\arrow{d}{\pi^{0}}
\arrow{r}{\left (
\begin{array}{cc}
\frac{d}{dx} & -I \\
0&\frac{d}{dx}
\end{array}
\right ) } \&\left (
\begin{array}{c}
{C^{\infty}}  \\
{C^{\infty}} 
\end{array}
\right )\arrow{d}{\pi^{1}}\arrow{r}{}\& 0
\\
0 \arrow{r}\&C^{\infty}  \arrow{r}{\frac{d}{dx}}\&~\arrow[dl, "I"] \&~   \\
 ~\&~\arrow{r}{\frac{d}{dx}}\&C^{\infty} \arrow{r}{} \& 0.
 \end{tikzcd}}
\end{equation*}
  \begin{equation}\label{twisted-diagram}
  \adjustbox{scale=0.75,center}{%
\begin{tikzcd}[ampersand replacement=\&, column sep=1in]
0\arrow{r}{}\&  \left (
\begin{array}{c}
{C^{\infty}\otimes \mathbb{V}}\\
{C^{\infty}\otimes \mathbb{V}}
\end{array}
\right )\arrow{d}{\mathcal{F}^{0}} \arrow{r}{\left (
\begin{array}{cc}
\grad &0\\
0&\grad
\end{array}
\right ) } \&\left (
\begin{array}{c}
{C^{\infty}\otimes \mathbb{M}}\\
{C^{\infty}\otimes \mathbb{M}} 
\end{array}
\right )\arrow{d}{\mathcal{F}^{1}}
\arrow{r}{\left (
\begin{array}{cc}
\curl & 0\\
0&\curl
\end{array}
\right ) } \&\left (
\begin{array}{c}
{C^{\infty}\otimes \mathbb{M}}  \\
{C^{\infty}\otimes \mathbb{M}} 
\end{array}
\right )\arrow{d}{\mathcal{F}^{2}}\arrow{r}{\left (
\begin{array}{cc}
\div & 0\\
0&\div
\end{array}
\right ) } \&\left (
\begin{array}{c}
{C^{\infty}\otimes \mathbb{V}}  \\
{C^{\infty}\otimes \mathbb{V}} 
\end{array}
\right )\arrow{d}{\mathcal{F}^{3}}\arrow{r}{}\& 0\\
0\arrow{r}{}\& \left (
\begin{array}{c}
{C^{\infty}\otimes \mathbb{V}}\\
{C^{\infty}\otimes \mathbb{V}}
\end{array}
\right )\arrow{d}{\pi^{0}}  \arrow{r}{\left (
\begin{array}{cc}
\grad & \mskw  \\
0&\grad
\end{array}
\right ) } \&\left (
\begin{array}{c}
{C^{\infty}\otimes \mathbb{M}}\\
{C^{\infty}\otimes \mathbb{M}} 
\end{array}
\right )\arrow{d}{\pi^{1}}
\arrow{r}{\left (
\begin{array}{cc}
\curl & -\mathcal{S} \\
0&\curl
\end{array}
\right ) } \&\left (
\begin{array}{c}
{C^{\infty}\otimes \mathbb{M}}  \\
{C^{\infty}\otimes \mathbb{M}} 
\end{array}
\right )\arrow{d}{\pi^{2}}\arrow{r}{\left (
\begin{array}{cc}
\div &-2\vskw  \\
0&\div
\end{array}
\right ) } \&\left (
\begin{array}{c}
{C^{\infty}\otimes \mathbb{V}}  \\
{C^{\infty}\otimes \mathbb{V}} 
\end{array}
\right )\arrow{d}{\pi^{3}}\arrow{r}{}\& 0
\\
0 \arrow{r}\&C^{\infty}\otimes \mathbb{V}\arrow{r}{\deff} \&C^{\infty}\otimes \mathbb{S}  \arrow{r}{\curl}\&~\arrow[dl, "\mathcal{S}^{-1}"] \&~  \&~ \\
 ~\&~\&~\arrow{r}{\curl}\&C^{\infty}\otimes \mathbb{S} \arrow{r}{\div} \& C^{\infty}\otimes \mathbb{V} \arrow{r}{} \& 0.
 \end{tikzcd}}
\end{equation}
Here the first row consists of a direct sum of de~Rham complexes. The second row is an example of the twisted complex
\eqref{cplx:twisted}.

The twisted version of the elasticity complex has a neat correspondence to Riemann--Cartan geometry \cite{christiansen2023extended}:
  \begin{equation*}
\begin{tikzcd}
 \mbox{embedding} \arrow{r}{\grad} &\mbox{coframes}\arrow{r}{\curl} &\mbox{torsion} \arrow{r}{\div}  &\cdots \\
 \mbox{rotation}\arrow{r}{\grad} \arrow[ur, "-\mskw"]&\mbox{connection} \arrow{r}{\curl} \arrow[ur, "\mathcal{S}"]&\mbox{curvature} \arrow{r}{\div}\arrow[ur, "2\vskw"] &\mbox{(co)vector (in Bianchi identity)},
 \end{tikzcd}
\end{equation*}
The twisted complex involves both curvature and torsion, while the elasticity (BGG) complex derived from the twisted complex only involves curvature. The BGG process is thus a cohomology-preserving elimination of torsion from Riemann--Cartan geometry. In Section \ref{sec:solid}, we will investigate the mechanics meaning of this elimination -- the first space in the bottom row representing a rotational degree of freedom in generalised continuum is eliminated to obtain the standard elasticity model.

\subsection{Conformal complex: conformal geometry}\label{sec:conformal}

Before discussing the construction, we first show the geometric meaning of the conformal deformation complex \eqref{conformal-complex-smooth}. 
 Given a metric $g$, a vector field $v\in \mathscr{X}(M)$ is called a {\it conformal Killing field} if $\mathcal{L}_{v}g=\varphi g$, where $\varphi$ is some scalar function. That is, the change of a metric  along a conforming Killing field is proportional to itself. By the formulas for Lie derivatives, $
 (\nabla v)_{ij}+(\nabla v)_{ji}=\varphi g_{ij}$. Taking the trace, we have $\varphi=\frac{2}{n}\div v$. Thus, $v$ is a conformal Killing field if and only if $\nabla v+\nabla^{T}v-\frac{2}{n}\div v g=0$. If $g$ is the Euclidean metric, i.e., $g=I$, then $\nabla v+\nabla^{T}v-\frac{1}{n}\div v I=2\dev\deff v$ is (two times) the traceless symmetric gradient. For the 1D case ($n=1$), $\dev\deff v$ is trivial as $\deff v$ is a scalar. For $n=2$, let $u=(u^{1}, u^{2})$. Then 
 $$
\dev\sym\grad u=\left (\begin{array}{cc}
\frac{1}{2}({\partial_{x}u_{1}-\partial_{y}u_{2}}) & \frac{1}{2}({\partial_{y}u_{1}+\partial_{x}u_{2}})\\ 
 \frac{1}{2}(\partial_{y}u_{1}+\partial_{x}u_{2}) &-\frac{1}{2}(\partial_{x}u_{1}-\partial_{y}u_{2})
\end{array}\right ),
$$
 which exactly consists of the components of the Cauchy-Riemann operator. This is a clear indication that the conformal Killing field is closely related to complex analysis in 2D.
 
 A Riemannian metric $g$ is conformally flat if it can be locally written as a conformal rescaling of the Euclidean metric, i.e., there exists $\phi(x)$ such that $g_{ij}=e^{\phi(x)}\delta_{ij}$. The Cotton tensor measures the deviation of a metric from being conformally flat. In terms of the Ricci curvature $R_{ij}$ and the scalar curvature $R$, the Cotton tensor is a third order tensor defined by 
 $$
 C_{ijk}=\nabla_{k}R_{ij}-\nabla_{j}R_{ik}+\frac{1}{2(n-1)}(\nabla_{j}Rg_{ik}-\nabla_{k}Rg_{ij}).
 $$
 Clearly, $ C_{ijk}$ is skew-symmetric with respect to the indices $j$ and $k$. Identifying a skew-symmetric 2-tensor with a vector in 3D (taking the Hodge dual), one obtains a 2-tensor
 $$
 C_{i}^{j}=\epsilon^{k\ell j}\nabla_{k}(R_{\ell i}-\frac{1}{4}Rg_{ij}). 
 $$
 In the linearised case, this is exactly the second operator $\cot$ in the conformal deformation complex. 
 
 Finally, $\div$ is a compatibility condition analogous to the differential Bianchi identity for the Riemann tensor.

The construction of the conformal deformation complex \eqref{conformal-complex-smooth} follows from the following diagram. 
    \begin{equation}\label{diagram:conformal} \adjustbox{scale=0.8,center}{%
\begin{tikzcd}
0 \arrow{r}&C^{\infty}\otimes \mathbb{V}\arrow{r}{\grad} &C^{\infty}\otimes \mathbb{M}  \arrow{r}{\curl} &C^{\infty}\otimes \mathbb{M} \arrow{r}{\div} & C^{\infty}\otimes \mathbb{V} \arrow{r}{} & 0\\
0 \arrow{r} &C^{\infty}\otimes (\mathbb{R}\oplus\mathbb{V})\arrow{r}{\grad}   \arrow[ur, "(\iota~-\mskw)"] &C^{\infty}\otimes (\mathbb{V}\oplus \mathbb{M})  \arrow{r}{\curl}   \arrow[ur, "(-\mskw~\mathcal{S})^{T}"]&C^{\infty}\otimes (\mathbb{V}\oplus \mathbb{M}) \arrow{r}{\div} \arrow[ur, "(I~2\vskw)^{T}"] & C^{\infty}\otimes (\mathbb{R}\oplus \mathbb{V})  \arrow{r}{} & 0\\
0 \arrow{r}&C^{\infty}\otimes \mathbb{V}\arrow{r}{\grad}  \arrow[ur, "(I~-\mskw)"]&C^{\infty}\otimes \mathbb{M}  \arrow{r}{\curl}  \arrow[ur, "(2\vskw~\mathcal{S})"]&C^{\infty}\otimes \mathbb{M} \arrow{r}{\div} \arrow[ur, "(\tr~ 2\vskw)"] & C^{\infty}\otimes \mathbb{V}  \arrow{r}{} & 0.
 \end{tikzcd}}
\end{equation}
\vbox{\vspace{-9.9cm}
\leftline{\hspace{2.9cm}\vspace{0.3cm}{
\begin{tikzpicture}
\draw[very thick, opacity=.75, ->, rounded corners] (6.2,1.3) -- ++(0.8, 0.0) -- ++(-0.8,-0.8) -- ++(0.8,0) -- ++(-0.8,-0.8)  -- ++(0.8,0)  ;
\draw[very thick, opacity=.75, ->] (3.4,1.5) -- ++(.4,0) ;
\draw[very thick, opacity=.75, ->] (9.4,-0.6) -- ++(.4,0) ;
\draw[very thick,opacity=.75, red] (2.1,1.3) ellipse (7mm and 3.5mm);
\draw[very thick,opacity=.75, red] (5.1,1.3) ellipse (7mm and 3.5mm);
\draw[very thick,opacity=.75, red] (8.1,-0.8) ellipse (7mm and 3.5mm);
\draw[very thick,opacity=.75, red] (11.1,-0.8) ellipse (7mm and 3.5mm);
\end{tikzpicture}}
}}
The diagram has three rows. The first and the last rows are vector-valued de~Rham complexes, while the row in the middle is a sum of a scalar and a vector version. To understand this, recall that we want to obtain a symmetric and traceless ($\mathbb{S}\cap \mathbb{T}$) matrix. We want to remove the trace part (scalar) and the skew-symmetric part (identified with a vector $\mathbb{R}^{3\times 3}_{\skw}\cong \mathbb{V}$) from the matrix in the second space of the first row. This is why a $(\mathbb{R}\oplus \mathbb{V})$-valued de~Rham complex is used for the second row. The $\mathbb{R}$ value eliminates the trace, and $\mathbb{V}$ eliminates the skew-symmetric part of the matrix. The construction follows a similar recipe as the cases with two rows, i.e., we eliminate components connected by the diagonal maps as much as possible. For example, in the following sequence
$$
\begin{tikzpicture}[>=stealth]

\node (A) at (0,0) {$C^{\infty}\otimes \mathbb {V}$};
\node (B) at (1.5,1.5) {$C^{\infty}\otimes (\mathbb {V}\oplus \mathbb M)$};
\node (C) at (3,3) {$C^{\infty}\otimes \mathbb {M}$};

\draw[->] (A) -- (B);
\draw[->] (B) -- (C);

\end{tikzpicture}
$$
$C^{\infty}\otimes (\mathbb {V}\oplus \mathbb M)$ splits into two parts: the $\mathbb V$ part eliminates with $C^{\infty}\otimes \mathbb {V}$ from downstairs, and the $\mathbb M$ part cancels with $C^{\infty}\otimes \mathbb {M}$ from upstairs. Therefore, this sequence is completely eliminated in the diagram. What remains after this elimination process is the conformal complex \eqref{conformal-complex-smooth}. Another way to think about this construction is to observe that each sequence in the diagonal direction is also a complex (which trivially holds for diagrams with two rows). Then each space has an algebraic Hodge decomposition according to this diagonal complex (see Section \ref{sec:hodge}). The final BGG complex consists of the ``harmonic forms'' (cohomology) of this decomposition.

Similarly, we can construct another complex with tracefree and symmetric matrices, referred to as the {\it conformal Hessian complex}. We start from the diagram
 \begin{equation*} 
 \adjustbox{scale=1,center}{%
\begin{tikzcd}
0 \arrow{r} &C^{\infty}\otimes \mathbb{R}  \arrow{r}{\grad} &C^{\infty}\otimes \mathbb{V} \arrow{r}{\curl}\arrow[dl, "I", shift left=1] &C^{\infty}\otimes \mathbb{V} \arrow{r}{\div}\arrow[dl, "\mskw", shift left=1] & C^{\infty}\otimes \mathbb{R} \arrow{r}{}\arrow[dl, "1/3\iota", shift left=1] & 0\\
0 \arrow{r}&C^{\infty}\otimes \mathbb{V}\arrow{r}{\grad} \arrow[u, "x\cdot "]\arrow[ur, "I"]&C^{\infty}\otimes \mathbb{M}  \arrow{r}{\curl} \arrow[ur, "2\vskw"]\arrow[u, "x\cdot "]\arrow[dl, "1/3\tr", shift left=1]&C^{\infty}\otimes \mathbb{M} \arrow{r}{\div}\arrow[ur, "\tr"] \arrow[u, "x\cdot "]\arrow[dl, "-2\vskw", shift left=1]&\arrow[u, "x\cdot "]C^{\infty}\otimes \mathbb{V} \arrow{r}{} \arrow[dl, "I", shift left=1]& 0\\
0 \arrow{r} &C^{\infty}\otimes \mathbb{R}\arrow[u, "x\otimes "]\arrow{r}{\grad} \arrow[ur, "\iota"]&C^{\infty}\otimes \mathbb{V}  \arrow[u, "x\otimes "]\arrow{r}{\curl} \arrow[ur, "-\mskw"]\arrow[u, "x\otimes "]&C^{\infty}\otimes \mathbb{V}\arrow[u, "x\otimes "] \arrow{r}{\div}\arrow[ur, "I"] & C^{\infty}\otimes \mathbb{R} \arrow[u, "x\otimes "]\arrow{r}{} & 0.
 \end{tikzcd}}
\end{equation*}     
\vbox{\vspace{-6.5cm}
\leftline{\hspace{-.2cm}\vspace{0.3cm}{
 \adjustbox{scale=1.2,center}{%
\begin{tikzpicture}
\draw[very thick, opacity=.75, ->, rounded corners] (3,1.5) -- ++(0.8, 0.0) -- ++(-1.5,-0.8) -- ++(1,0)  ;
\draw[very thick, opacity=.75, ->] (5.3,0.5) -- ++(.4,0) ;
\draw[very thick, opacity=.75, ->, rounded corners](7.5,0.5) -- ++(0.8, 0.0) -- ++(-1.5,-0.8) -- ++(1,0)  ;
\draw[very thick,opacity=.75, red] (2.,1.3) ellipse (7mm and 3.5mm);
\draw[very thick,opacity=.75, red] (4.2,0.3) ellipse (7mm and 3.5mm);
\draw[very thick,opacity=.75, red] (6.5,0.2) ellipse (7mm and 3.5mm);
\draw[very thick,opacity=.75, red] (8.8,-0.7) ellipse (7mm and 3.5mm);
\end{tikzpicture}}
}
}}
The elimination leads to the following {\it conformal Hessian complex}:
 \begin{equation} \label{conformal-hessian}
\begin{tikzcd}
0 \arrow{r} &C^{\infty}\otimes \mathbb{R}  \arrow{r}{\dev\hess} &C^{\infty}\otimes (\mathbb{S}\cap \mathbb{T}) \arrow{r}{\sym\curl} &C^{\infty}\otimes (\mathbb{S}\cap \mathbb{T}) \arrow{r}{\div\div} & C^{\infty}\otimes \mathbb{R} \arrow{r}{} & 0
 \end{tikzcd}
 \end{equation}

\subsection{The BGG machinery}

The connecting maps between the de~Rham complexes, the twisted complexes and the BGG complexes are available and explicit (such as those in \eqref{twisted-diagram}; see \cite[(67)]{arnold2021complexes}, \cite[(21), (30)]{vcap2023bgg}). The existence of these maps not only indicates that these complexes all have isomorphic cohomology, but they also provide a machinery for carrying over results for the de~Rham complexes (e.g., in electromagnetism) to results for other applications related to the twisted and BGG complexes. In this section, we show an example of using this idea to derive the Poincar\'e operators. 

Given a complex
$$
\begin{tikzcd}
\cdots \arrow[r] & V^{i-1} 
\arrow[r, "d^{i-1}"] 
\arrow[r, leftarrow, "P^{i}"', yshift=-0.5ex] & V^i 
\arrow[r, "d^{i}"] 
\arrow[r, leftarrow, "P^{i+1}"', yshift=-0.5ex] & V^{i+1} \arrow[r] & \cdots,
\end{tikzcd}
$$
the Poincar\'e operators   ${P^{k}}: V^{k}\mapsto V^{k-1}$ are defined to satisfy the null-homotopy property 
$$
{{d}^{k-1}}{P^{k}}+{P^{k+1}}{{d}^{k}}=I_{V^{k}},
$$
The existence of Poincar\'e operators immediately implies the exactness of the complex, as
$$
du=0 \quad \Longrightarrow \quad u=(dP+Pd)u=d(Pu).
$$
The Poincar\'e operators are widely used for PDE analysis \cite{costabel2010bogovskiui,guzman2021estimation} and construct finite element spaces \cite{hiptmair1999canonical,Arnold.D;Falk.R;Winther.R.2006a,christiansen2018generalized}.  

For the first slot of a de~Rham complex, suppose $\vec u=\grad \varphi$. Once we know $\vec u$, then $\varphi$ can be recovered by integrating along a curve. This is the 0-th Poincar\'e operator in the de~Rham complex. The construction for higher form degrees is also given by integration. 

A natural question related to elasticity is the following: suppose that the strain tensor $\ten e=\deff \vec v$, where $\vec v$ is the displacement. Can we recover $\vec v$ from $\ten e$? In fact, this question was answered by the formulas by Ces\`aro and Volterra in 1906 and 1907 \cite{cesaro1906sulle,volterra1907equilibre}. The formula of recovering $\ten e$ from $\vec v$ not only involves integration along curves, but also involves derivatives of $\ten e$. The Ces\`aro--Volterra formula is a crucial ingradient for intrinsic theories of elasticity \cite{ciarlet2009intrinsic,ciarlet2009cesaro,van2016frank}.

From a complex and BGG point of view, the question boils down to constructing Poincar\'e operators for the elasticity complex. The BGG machinery comes into play here \cite{christiansen2020poincare,vcap2023bounded}: since the Poincar\'e operators $P^{\bs}$ for the de~Rham complexes are known, one may follow the BGG diagrams \eqref{poincare-bgg} to obtain the Poincar\'e operators $ \mathscr{P}^{\bs}$ for the elasticity complex $(\Upsilon^{\bs}, \mathscr{D}^{\bs})$ as
 $$
 \mathscr{P}^{i+1}=\pi^{i}F^{i}P^{i+1}(F^{i+1})^{-1}A^{i+1}.
 $$
    \begin{minipage}{.5\textwidth}
        \centering
\begin{equation}\label{poincare-bgg}
\begin{tikzcd}[swap]
Y^{i}\arrow{d}{F} \arrow[r, "d^{i}", shift right=1] &\arrow{l}{P^{i+1}}Y^{i+1}\arrow{d}{F}\\
Y^{i} \arrow[d, "\pi^{i}", shift right=1] \arrow[r, "d_{V}^{i}", shift right=1]&\arrow{l}{P_{V}^{i+1}}Y^{i+1}\arrow[d, "\pi^{i+1}", shift right=1] \\
\Upsilon^{i}\arrow[r, "\mathscr{D}^{i}", shift right=1]\arrow{u}{A^{i}}&\arrow{l}{\mathscr{P}^{i+1}}\Upsilon^{i+1}\arrow{u}{A^{i+1}}
\end{tikzcd}
\end{equation}
\vbox{\vspace{-6.5cm}
\leftline{\hspace{2.1cm}\vspace{0.cm}{
\begin{tikzpicture}
\draw[very thick,opacity=.75, blue, <-]  (0.3,0.5) -- ++(0,2.4);
\draw[very thick,opacity=.75, blue, ->]  (3.5,0.5) -- ++(0,2.4);
\draw[very thick,opacity=.75, blue, <-]  (0.4,3.2) -- ++(3,0);
\draw[very thick,opacity=.75, red, <-]  (0.4,-.2) -- ++(3,0);
\end{tikzpicture}}
}}
\end{minipage}
    \begin{minipage}{.5\textwidth}
        \centering
$(Y^{\bs}, d^{\bs})$:   de~Rham complex 
  \vspace{+0,6cm}\\
$(Y^{\bs}, d_{V}^{\bs})$:   twisted complex   
  \vspace{+0,6cm}\\
  $(\Upsilon^{\bs}, \mathscr{D}^{\bs})$: BGG complex
\end{minipage}

This rather different homological algebraic approach recovers the classical Cas\`aro-Volterra formula
\begin{align*} 
\mathscr{P}^{1}(e)&=\int_{0}^{x}e(y)\cdot\,dy-\int_{0}^{x}(\mskw\int_{0}^{y}(e({z})\times \nabla)^{T}\cdot dz)\cdot dy
\end{align*}
as a special case.

\section{Analysis and PDEs}\label{sec:analysis}

In the previous section, we considered complexes built from smooth functions and vector fields. However, one can also construct similar complexes using more general function spaces, which allows algebraic properties (particularly the finite-dimensionality of cohomology) to yield various analytic consequences. 
 
On the interval \(I = (0,1)\), we have the one-dimensional de~Rham complex
\[
\begin{tikzcd}
0 \arrow{r} & H^k(I) \arrow{r}{\tfrac{d}{dx}} & H^{k-1}(I) \arrow{r} & 0,
\end{tikzcd}
\]
where \(H^k(I)\) is the Sobolev space of functions whose weak derivatives up to order \(k\) lie in \(L^2(I)\):
\[
H^k(I) := \bigl\{\,u \in L^2(I)\,:\,\tfrac{d^j u}{dx^j} \in L^2(I),\,j=1,\dots,k\bigr\}.
\]
We also define \(H_0^k(I)\) to be the completion of \(C_0^\infty(I)\) under the \(H^k\) norm, which can be viewed as \(H^k(I)\) functions that vanish (in the appropriate sense) at the boundary. Consequently, for \(k \ge 1\), we get another complex
\[
\begin{tikzcd}
0 \arrow{r} & H_0^k(I) \arrow{r}{\tfrac{d}{dx}} & H_0^{k-1}(I) \arrow{r} & 0.
\end{tikzcd}
\]
By convention, we set \(H^0(I) := L^2(I)\) and \(H_0^0(I) := L_0^2(I) := \bigl\{u \in L^2(I)\,:\,\int_I u\,dx = 0\bigr\}.\)

Similarly, in three dimensions on a domain \(\Omega \subset \mathbb{R}^3\), the de~Rham complex  has the Sobolev version
\begin{equation}\label{deRham:3D-sobolev}
\begin{tikzcd}
0 \arrow{r}{} & H^{q}(\Omega) \arrow{r}{\grad}& H^{q-1}(\Omega)\otimes \mathbb{R}^{3} \arrow{r}{\curl} & H^{q-2}(\Omega)\otimes \mathbb{R}^{3} \arrow{r}{\div} & H^{q-3}(\Omega) \arrow{r}{} &  0,
\end{tikzcd}
\end{equation}
where \(q\) is a real number. There are also variants with boundary conditions (e.g.\ taking closures of $C^{\infty}_{0}$ fields).

For the analysis of many problems in electromagnetism and other fields, another kind of regularity plays an important role:
\begin{equation}\label{deRham:3D-Hd}
\begin{tikzcd}
0 \arrow{r}{} & H^{1}(\Omega) \arrow{r}{\grad}& H(\curl, \Omega) \arrow{r}{\curl} & H(\div, \Omega)\arrow{r}{\div} & L^{2}(\Omega) \arrow{r}{} &  0,
\end{tikzcd}
\end{equation}
where
\[
H(\mathrm{curl},\Omega) := \bigl\{\,u \in L^2(\Omega)\otimes\mathbb{R}^3 : \nabla\times u \in L^2(\Omega)\otimes\mathbb{R}^3 \bigr\},
\]
and similarly for \(H(\mathrm{div},\Omega)\) and more general spaces \(H(d,\Omega)\), where $d$ is a linear differential operator.  Clearly, \eqref{deRham:3D-Hd} is only one example of many. For instance, the sequence
\begin{equation}\label{deRham:3D-stokes}
\begin{tikzcd}
0 \arrow{r}{} & H^{1}(\Omega) \arrow{r}{\grad}& H^{0,1}(\curl, \Omega) \arrow{r}{\curl} & H^1\otimes \mathbb{R}^{3}\arrow{r}{\div} & L^{2}(\Omega) \arrow{r}{} &  0,
\end{tikzcd}
\end{equation}
is also a complex, where $H^{0,1}(\curl, \Omega) :=\{u\in L^{2}(\Omega)\otimes \mathbb{R}^{3}: \curl u\in H^{1}\otimes \mathbb{R}^{3}\}$. 

A fundamental theorem concerning these Sobolev complexes was proved by Costabel and McIntosh~\cite{costabel2010bogovskiui}:

\begin{theorem}[Costabel--McIntosh]\label{thm:costabel-mcintosh}
Let \(\Omega\subset\mathbb{R}^3\) be a strong Lipschitz domain. Then for any real \(q\), the complex \eqref{deRham:3D-sobolev} has \emph{uniform cohomology representatives}:
\[
\ker\bigl(d^k,H^{q-k}\Lambda^k(\Omega)\bigr)
= d^k\bigl(H^{q-(k-1)}\Lambda^{k-1}(\Omega)\bigr)
+ \mathscr{H}^k,
\]
where \(\mathscr{H}^k \subset C^\infty\) is a finite-dimensional space independent of \(q\).
\end{theorem}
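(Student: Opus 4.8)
The plan is to construct, for each degree $k$, a \emph{regularized Poincar\'e operator} $R^{k}\colon H^{s}\Lambda^{k}(\Omega)\to H^{s+1}\Lambda^{k-1}(\Omega)$ that is bounded for \emph{every} real $s$, together with a finite-rank operator $K^{k}$ whose range is a fixed finite-dimensional space $\mathscr{H}^{k}\subset C^{\infty}\Lambda^{k}$ of closed forms, such that the null-homotopy identity
\[
d^{k-1}R^{k}+R^{k+1}d^{k}=I-K^{k}
\]
holds. Granting this, the theorem is immediate: if $u\in H^{q-k}\Lambda^{k}(\Omega)$ and $d^{k}u=0$, then $u=d^{k-1}(R^{k}u)+K^{k}u$ with $R^{k}u\in H^{q-(k-1)}\Lambda^{k-1}(\Omega)$ and $K^{k}u\in\mathscr{H}^{k}$, and $\mathscr{H}^{k}$ does not depend on $q$.

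First I would treat a domain star-shaped with respect to a point $a$, where the classical Ces\`aro--Poincar\'e integral operator $P_{a}$ (integration of the contracted form along rays emanating from $a$) satisfies $d^{k-1}P_{a}^{k}+P_{a}^{k+1}d^{k}=I$ exactly. Then I would \emph{regularize}: fix $\theta\in C_{0}^{\infty}$ supported in a ball $B$ with respect to which $\Omega$ is star-shaped and with $\int\theta=1$, and set $R^{k}:=\int_{B}\theta(a)\,P_{a}^{k}\,\mathrm{d}a$. Averaging turns the identity into $d^{k-1}R^{k}+R^{k+1}d^{k}=I-K^{k}$, where $K^{k}$ is an explicit integral against $\theta$ whose range is the space of closed polynomial $k$-forms of degree controlled by $\theta$ --- a fixed finite-dimensional space. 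The decisive analytic point is that, written as an integral operator, $R^{k}$ has a Schwartz kernel with a Calder\'on--Zygmund-type singularity along the diagonal and is smooth off it, so it is a classical pseudodifferential operator of order $-1$; hence it maps $H^{s}\to H^{s+1}$ (and $W^{s,p}\to W^{s+1,p}$) for \emph{all} real $s$. This full-scale boundedness is precisely what makes the conclusion uniform in $q$, including the regimes $q<k$ where the na\"ive Poincar\'e estimate says nothing.

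To pass to a strong Lipschitz domain, I would use that it admits a finite open cover $\{U_{i}\}$ of $\overline{\Omega}$ for which each $\Omega\cap U_{i}$ is star-shaped with respect to a ball, with a subordinate smooth partition of unity $\{\phi_{i}\}$. Gluing the local operators $R_{i}^{k}$ against the $\phi_{i}$ yields a global $R^{k}$; the errors produced are commutators $[\phi_{i},d]=\mathrm{d}\phi_{i}\wedge(\cdot)$, which are operators of order $0$ and hence bounded on every Sobolev scale, so they can be absorbed (directly, or via a convergent correction series) without spoiling the order-$(-1)$ mapping property. The remaining global remainder $K^{k}$ is infinitely smoothing; that its range may be taken finite-dimensional and independent of $q$ follows from the Fredholm property of the complex --- the embeddings $H^{s+1}\hookrightarrow H^{s}$ on a bounded Lipschitz domain are compact by Rellich, so each cohomology space is finite-dimensional --- together with the fact that representatives can be chosen among smooth closed forms assembled from the polynomial remainders on the pieces.

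The main obstacle is the second step: establishing rigorously that the regularized operator is a pseudodifferential operator of order $-1$, i.e.\ bounded on the \emph{entire} Sobolev (and Besov/Triebel--Lizorkin) scale rather than only for $s\ge 0$. This demands a careful local analysis of the kernel of $R^{k}$ --- verifying the homogeneity and smoothness structure of its diagonal singularity --- on a domain whose boundary is only Lipschitz, where the geometry of the rays defining $P_{a}$ must be handled with care. By comparison, the partition-of-unity globalization, though laborious, is routine once the local statement with all-scale bounds is available.
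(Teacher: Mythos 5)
The paper does not prove Theorem~\ref{thm:costabel-mcintosh}; it states it with a citation to Costabel--McIntosh \cite{costabel2010bogovskiui}, and your sketch reproduces exactly the strategy of that cited proof: regularized Poincar\'e (and Bogovski\u{\i}) operators obtained by smoothly averaging the base point over a ball, the key lemma that the resulting kernel is a pseudodifferential operator of order $-1$ and hence bounded on the full Sobolev scale, and a partition-of-unity globalization over a finite cover of the strong Lipschitz domain by pieces star-shaped with respect to balls, with the finite-rank smoothing remainder supplying the $q$-independent smooth cohomology representatives $\mathscr{H}^{k}$. Your outline is correct and matches the source's approach; the only minor imprecision is that for the plain averaged Poincar\'e operator on a star-shaped domain the remainder $K^{k}$ is nonzero only in degree $k=0$ (a polynomial-reproducing remainder in all degrees requires the modified construction with moment conditions on $\theta$).
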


Similar statements hold when boundary conditions are imposed (i.e.\ taking the closure of \(C^\infty_0\)-forms in the appropriate Sobolev norms).

Theorem \ref{thm:costabel-mcintosh} on the cohomology of the Sobolev complex also implies the cohomology of other variants. For example, the same claim also holds for \eqref{deRham:3D-Hd} and \eqref{deRham:3D-stokes}.  The idea for showing results for complexes like \eqref{deRham:3D-Hd} and \eqref{deRham:3D-stokes} is to observe that, e.g., $H(\curl, \Omega)$ is in between $H^{1}(\Omega)\otimes \mathbb{R}^{3}$ and  $L^{2}(\Omega)\otimes \mathbb{R}^{3}$. These two spaces fit in two de~Rham complexes, one with $q=2$ and one with $q=1$. As Theorem \ref{thm:costabel-mcintosh} is independent of $q$ and works for both cases, we draw the same conclusion for spaces of $H(\curl, \Omega)$ type.

In the remainder of this section, we illustrate how the finite-dimensionality of these cohomology groups leads to a variety of analytic consequences for PDEs and their solutions. To illustrate this idea and emphasise the underlying algebraic structures, we omit many details and follow an informal style.  A rigorous discussion involving unbounded operators and Hilbert complexes can be found in \cite{arnold2018finite,Arnold.D;Falk.R;Winther.R.2006a,Arnold.D;Falk.R;Winther.R.2010a}.

\subsection{Adjoint operators and Hodge--Laplacian problems}

Associated with each complex, we can define Hodge--Laplacian problems. First, let $d^{\ast}_{k}$ be the formal $L^{2}$ adjoint of $d^{k-1}$. For example, from the integration by parts
$$
\int_{\Omega}\grad u \cdot  v\, dx=-\int_{\Omega}u\div v\,dx+\mbox{ boundary terms },
$$
we say that the formal adjoint of $\grad$ is $-\div$ (and thus the formal adjoint of $\div$ is $-\grad$). Similarly, from 
$$
\int_{\Omega}\curl v \cdot  w\, dx=\int_{\Omega}v\cdot \curl w\,dx+\mbox{ boundary terms },
$$
we say that the formal adjoint of $\curl$ is still $\curl$ ($\curl$ is formally self-adjoint). The adjoint operators also form a complex:
\begin{equation}\label{3D-deRham-adjoint}
\begin{tikzcd}[swap]
    0 &\arrow[l]  C^{\infty}(\Omega) &\arrow[l, "-\div"]  C^{\infty}(\Omega)\otimes  \mathbb{R}^{3} &\arrow[l, "\curl"]  C^{\infty}(\Omega)\otimes  \mathbb{R}^{3} &\arrow[l, "-\grad"] C^{\infty}(\Omega) &\arrow[l]   0.
\end{tikzcd}
\end{equation}
It is a coincidence for the 3D de~Rham complex that the adjoint complex has the same form, but this is not always the case (not even for the 2D de~Rham complex). The Hodge--Laplacian at $V^{k}$ is $d^{\ast}_{k}d^{k}+d^{k-1}d_{k-1}^{\ast}$. The {\it Hodge--Laplacian problem} refers to the PDE
$$
(d^{\ast}_{k}d^{k}+d^{k-1}d_{k-1}^{\ast})u=f, 
$$
for a given right hand side $f$.

 Concerning the example \eqref{deRham:3D}, $d$ and $d^{\ast}$ at $V^{0}=C^{\infty}(\Omega)$ are
$$
\begin{tikzcd}
0 \textcolor{red}{\arrow[r,  ->, shift left=1]\arrow[r,  <-, shift right=1]}& {C^{\infty}(\Omega)}  {\arrow[r,  ->,  "\grad", shift left=1]\arrow[r,  <-, swap, "-\div", shift right=1]} &C^{\infty}(\Omega;\R^3) \quad & 
C^{\infty}(\Omega;\R^3) \quad & C^{\infty}(\Omega)\quad &0.
 \end{tikzcd}
$$
Here $d^{\ast}$ is trivial (mapping everything to zero). The Hodge--Laplacian problem at $V^{0}$ is thus
\begin{equation}\label{HL-0}
-\div\grad u =f,
\end{equation}
which is the Poisson equation.

 Similarly, at $V^{1}=C^{\infty}(\Omega)\otimes \mathbb{R}^{3}$, the $d$ and $d^{\ast}$ operators are demonstrated as follows:
 $$
\begin{tikzcd}
0 &{C^{\infty}(\Omega)}  {\arrow[r,  ->,  "\grad", shift left=1]\arrow[r,  <-, swap, "-\div", shift right=1]} & {C^{\infty}(\Omega;\R^3)}  {\arrow[r,  ->,  "\curl", shift left=1]\arrow[r,  <-, swap, "\curl", shift right=1]} & 
C^{\infty}(\Omega;\R^3) \quad & C^{\infty}(\Omega)\quad &0,
 \end{tikzcd}
$$
and therefore the 
Hodge--Laplacian problem is 
\begin{equation}\label{HL-1}
-\grad\div v+\curl\curl v =f.
\end{equation}
If $\div v=0$, this is reduced to a curl-curl system $\curl\curl v=f$, which is related to the Maxwell equation. 

The cases of $V^{2}$ and $V^{3}$ are similar. One formally obtains the same $-\grad\div+\curl\curl$ system and the Poisson equation. 

 \begin{remark}\label{rmk:boundary-conditions}
The presentation above is somewhat formal. A more rigorous treatment uses the framework of \emph{Hilbert complexes} \cite{Arnold.D;Falk.R;Winther.R.2006a,bru1992hilbert}, in which one begins with a sequence of \(L^2\)  spaces and views each differential operator as an \emph{unbounded operator}:
\begin{equation}\label{deRham:L2}
\begin{tikzcd}
0 \arrow{r}{} & L^{2}(\Omega) \arrow{r}{\grad}& L^{2}(\Omega)\otimes \mathbb{R}^{3} \arrow{r}{\curl} &L^{2}(\Omega)\otimes \mathbb{R}^{3} \arrow{r}{\div} & L^{2}(\Omega) \arrow{r}{} &  0.
\end{tikzcd}
\end{equation}
Here the $\grad$, $\curl$, $\div$ operators do not map $L^{2}$ spaces to $L^{2}$, but they do so on certain subspaces of $L^{2}$. Specifying the domains of these unbounded operators, along with defining their adjoints in a precise manner, automatically encodes the appropriate boundary conditions for the variables. For instance, if we specify \eqref{deRham:3D-Hd} as the domain which does not include explicit boundary conditions, then its adjoint operator will. Consequently, the adjoint Hilbert complex for the 3D $L^{2}$ de~Rham sequence has the domain
\begin{equation}\label{3D-deRham-adjoint-Hd}
\begin{tikzcd}[swap]
0 
&\arrow[l] L^{2}(\Omega)
&\arrow[l, "-\div"] H_{0}(\div,\Omega)
&\arrow[l, "\curl"] H_{0}(\curl,\Omega)
&\arrow[l, "-\grad"] H^{1}_{0}(\Omega)
&\arrow[l] 0.
\end{tikzcd}
\end{equation}
Alternatively, one may specify \eqref{3D-deRham-adjoint-Hd} as the domain complex. Then its adjoint complex \eqref{deRham:L2} has \eqref{deRham:3D-Hd} as the domain. 
\end{remark}

\begin{remark}
We have the vector identity
$$
(\curl\curl-\grad\div)\vec u=-\Delta \vec u,
$$
where \(\Delta\) denotes the scalar Laplacian applied componentwise to \(u \in \mathbb{R}^3\). Hence, the Hodge--Laplacian on 1-forms or 2-forms in three dimensions is often referred to as the \emph{vector Laplacian}. From a de~Rham complex point of view, however, working with $\curl\curl-\grad\div$ can differ significantly from treating each component as a separate scalar Laplacian. In particular, the boundary conditions that emerge in the Hilbert complex setting are not the same as the usual Dirichlet condition \(u|_{\partial \Omega}=0\) for scalar problems.
\end{remark}

\subsection{Hodge decomposition}\label{sec:hodge}

Any three-dimensional flow can be decomposed into the sum of a rational part ($\curl$ of a field) plus a potential part (gradient of a function) plus another part representing the topology of the domain. The idea can be precisely stated in terms of complexes.


Given a complex 
$$
\begin{tikzcd}
\cdots \arrow{r} & V^{k-1} {\arrow[r,  ->,  "d^{k-1}", shift left=1]\arrow[r,  <-, swap, "d_{k}^{\ast}", shift right=1]} & V^{k}{\arrow[r,  ->,  "d^{k}", shift left=1]\arrow[r,  <-, swap, "d_{k+1}^{\ast}", shift right=1]} & V^{k-1}\arrow{r}&\cdots,
\end{tikzcd}
$$
we have the Hodge decomposition at $V^{k}$ as the sum of  a part coming from the left (range of $d^{k-1}$ on its domain)  plus a part coming from the right (range of $d^{\ast}_{k+1}$ on its domain) plus a harmonic part orthogonal to both, i.e., 
\begin{equation*}
V^{k}=\ran(d^{k-1})\oplus \ran(d^{\ast}_{k+1}) \oplus \mathscr{H}^{k}.
\end{equation*}
The above decomposition is based on the fact that $\ran(d^{k-1})$ is orthogonal to the kernel of $d^{\ast}_{k}$ (as $(d^{k-1}v, w)=(v, d_{k}^{\ast}w)=0$, for any $v$ and $w\in \ker(d_{k}^{\ast})$). Furthermore, the kernel of $d^{\ast}_{k}$ can be decomposed as the range of $d^{\ast}_{k+1}$ plus the harmonic part $\mathscr{H}^{k}:=\ker (d^{k})\cap \ker(d^{\ast}_{k})$. Alternatively, one may use a similar argument based on the fact that the range of $d^{\ast}_{k+1}$ is orthogonal to the kernel of $d^{k}$.

The argument above requires that the formal adjoint matches the rigorous adjoint of the corresponding unbounded operators, which is precisely how boundary conditions become incorporated (see Remark~\ref{rmk:boundary-conditions}).  
Typically, one first specifies a Hilbert complex of base spaces (often \(L^2\)). The domains of \(d\) and \(d^*\) are then chosen so that boundary terms arising from integration by parts vanish. In other words, the requirement that
\[
(d^{k-1}v,\,w)\;=\;(v,\,d^*_k w)
\]
be well-defined forces either \(v\) or \(w\) to satisfy certain boundary conditions. Consequently, those boundary conditions emerge naturally from the adjoint operators in the Hilbert complex framework.

\begin{example}[Hodge decomposition of the de~Rham complex in 3D]
Consider the $L^{2}$ de Rham complex \eqref{deRham:L2}. We specify \eqref{deRham:3D-Hd} as the {\it domain complex}. The adjoint operators also form a complex which formally has the same form as \eqref{deRham:L2}. The domain complex of this adjoint complex is \eqref{3D-deRham-adjoint-Hd}. Here we did not specify boundary conditions in \eqref{deRham:3D-Hd}. Thus its adjoint \eqref{3D-deRham-adjoint-Hd} should incorporate certain vanishing boundary conditions. 

For the ``1-forms'' in \eqref{deRham:L2},
we have 
\begin{equation}\label{hodge-decomposition-example}
L^{2}\otimes \mathbb{R}^{3}=\ran(\grad)\oplus \ran(\curl^{\ast}) \oplus \mathscr{H}= \grad H^{1}(\Omega)\oplus \curl H_{0}(\curl, \Omega)\oplus \mathscr{H},
\end{equation}
where $\mathscr{H}=\ker (\curl, H(\curl, \Omega))\cap \ker (\div, H_{0}(\div, \Omega))$. Here $\ran(\grad)$ is from the domain complex of $d^{\bs}$ \eqref{deRham:3D-Hd}, and $\ran(\curl^{\ast})$ is from the domain of the adjoint complex \eqref{3D-deRham-adjoint-Hd}.
The decomposition also gives 
$$
\ker (\curl, H(\curl, \Omega))=\grad H^{1}(\Omega)\oplus \mathscr{H},
$$
and 
$$
\ker (\div, H_{0}(\div, \Omega))=\curl H_{0}(\curl, \Omega)\oplus \mathscr{H}.
$$

The Hodge decomposition of \eqref{deRham:3D-Hd} at ``2-forms'' is 
$$
L^{2}\otimes \mathbb{R}^{3}=\ran(\curl)\oplus \ran(\div^{\ast}) \oplus \mathscr{H}=\curl H(\curl, \Omega)\oplus \grad H_{0}^{1}(\Omega) \oplus \mathscr{H}_{0},
$$
where $\mathscr{H}_{0}= \ker (\div, H(\div, \Omega))\cap \ker (\curl, H_{0}(\curl, \Omega))$. The decomposition gives 
$$
\ker (\curl, H_{0}(\curl, \Omega))=\grad H_{0}^{1}(\Omega)\oplus \mathscr{H}_{0},
$$
and 
$$
\ker (\div, H(\div, \Omega))=\curl H(\curl, \Omega)\oplus \mathscr{H}_{0}.
$$
\end{example}

\subsection{Poincar\'e-Korn inequalities}

On a bounded Lipschitz domain, the following inequalities hold:
\begin{itemize}
\item {Poincaré inequalities:}
\begin{equation}\label{poincare-inequality}
\|u\|_{H^{1}}\le C\|\nabla u\|_{L^{2}},\quad u\in H^{1},\;u\perp \ker(\nabla).
\end{equation}
A straightforward generalisation to a vector-valued function \(\mathbf{v}\) implies
\[
\|\mathbf{v}\|_{H^{1}}\le C\|\nabla \mathbf{v}\|_{L^{2}}.
\]
The matrix \(\nabla \mathbf{v}\in \mathbb{R}^{3\times 3}\) has nine components, so controlling \(\|\mathbf{v}\|_{H^{1}}\) effectively uses all nine.

\item {Korn inequality:}
\[
\|\mathbf{u}\|_{H^{1}}\le C \|{\sym}\,\grad \mathbf{u}\|_{L^{2}},\quad \mathbf{u}\perp \ker({\sym}\grad).
\]
Here, \(\mathbf{u}\) is controlled by the symmetric part of its gradient \({\sym}\grad \mathbf{u}\). In 3D, \({\sym}\grad \mathbf{u}\) has six independent components.

\item {Conformal Korn inequality:}
\[
\|\mathbf{u}\|_{H^{1}}\le C \|{\dev}\,{\sym}\grad  \mathbf{u}\|_{L^{2}},\quad \mathbf{u}\perp \ker({\dev}\,{\sym}\grad ).
\]
Recall that \(\dev\ten w = \ten w - \tfrac{1}{n}\tr(\ten w)\ten I\) is the deviator of a matrix \(w\). In 3D, this means a vector field \(\mathbf{u}\) is controlled by only five components of its gradient matrix.
\end{itemize}

The conformal Korn inequality holds in \(n\)-dimensions only for \(n\ge3\). As discussed in Section~\ref{sec:conformal}, in 2D the operator \({\dev}\,{\sym}\grad \) corresponds to the Cauchy--Riemann operator. For applications of the conformal Korn inequality and an explanation of its failure in 2D, see \cite{dain2006generalized}.

The Poincaré inequality is classical. The Korn inequality plays a foundational role in linear elasticity theory, and several proofs can be found in \cite{ciarlet2013linear}. One such proof uses the Lions lemma (analogous to \eqref{poincare-inequality}, but with an \(L^{2}\)-norm on the left and an \(H^{-1}\)-norm on the right), plus the fact that second derivatives commute (\(\partial_{i}\partial_{j}=\partial_{j}\partial_{i}\)).

Observe that the Poincaré, Korn, and conformal Korn inequalities all arise from the fact that the operators \(\mathcal{D} = \grad,{\sym}\grad,{\dev} {\sym}\grad\) have closed range. If \(\mathcal{D}:\ker(\mathcal{D})^\perp\to \mathrm{ran}(\mathcal{D})\) is a linear operator between Banach spaces and its range is closed, then the Banach isomorphism theorem implies \(\mathcal{D}^{-1}\) is bounded on \(\mathrm{ran}(\mathcal{D})\), giving
\[
\|u\|_{H^{1}}\le C\,\|\mathcal{D}u\|_{L^{2}}
\quad\text{for}\quad
u\in \ker(\mathcal{D})^\perp.
\]
Thus a unified proof for such inequalities boils down to showing closedness of \(\mathrm{ran}(\mathcal{D})\). Moreover, in the context of complexes, if there is exactness, then
\[
\mathrm{ran}(\mathcal{D}) \;=\;\ker(\mathcal{D}^{+}),
\]
for some operator \(\mathcal{D}^{+}\). Since \(\ker(\mathcal{D}^{+})\) is clearly closed, \(\mathrm{ran}(\mathcal{D})\) is also closed. This extends to the case of nontrivial but \emph{finite-dimensional} cohomologies, because finite-dimensional spaces are compact \cite{hormander1994analysis}.

From this perspective, we see:
 \begin{itemize}
\item
 The Poincar\'e inequality is due to $\curl\circ\grad=0$ and $\ran(\grad)=\ker(\curl)$  in the de~Rham complex, reflecting topological constraints.
\item The Korn inequality follows from  $(\curl\circ \mathrm{T}\circ \curl) \circ (\sym\grad) =0$ and $\ran(\sym\grad)=\ker(\curl\circ \mathrm{T}\circ \curl )$, linking it to the elasticity complex and Riemannian geometry.
\item The conformal Korn inequality in 3D is due to
$(\curl\circ S^{-1}\circ \curl\circ S^{-1}\circ \curl) \circ (\dev\sym\grad) =0$ and $\ran(\sym\grad)=\ker(\curl\circ \mathrm{T}\circ \curl )$, connecting it to the Cotton--York tensor and conformal geometry. 
\end{itemize}

Here, failure of the 2D conformal Korn inequality corresponds to the non-closed range of the Cauchy--Riemann operator.

A fix from the perspective of differential complexes is to consider an analogy of the 3D diagram \eqref{diagram:conformal}
\[
\begin{tikzcd}
0 \arrow{r}
&  {H^{q}\otimes \mathbb{V}}
\arrow{r}{\grad}
& {H^{q-1}\otimes \mathbb{M}}
\arrow{r}{\mathrm{rot}}
& H^{q-2}\otimes \mathbb{V}
\arrow{r} & 0
\\
0 \arrow{r}
& H^{q-1}\otimes (\mathbb{R}\times \mathbb{R})
\arrow{r}{\grad}
\arrow[ur, "S^{0,1}"]
& H^{q-2}\otimes (\mathbb{V}\times \mathbb{V})
\arrow{r}{\mathrm{rot}}
\arrow[ur, "S^{1,1}"]
& H^{q-3}\otimes (\mathbb{R}\times \mathbb{R})
\arrow{r} & 0
\\
0 \arrow{r}
& H^{q-2}\otimes \mathbb{V}
\arrow{r}{\grad}
\arrow[ur, "S^{0,2}"]
& {H^{q-3}\otimes \mathbb{M}}
\arrow{r}{\mathrm{rot}}
\arrow[ur, "S^{1,2}"]
&  {H^{q-4}\otimes \mathbb{V}}
\arrow{r} & 0
\end{tikzcd}
\]
\vbox{\vspace{-6.5cm}
\leftline{\hspace{-0.2cm}\vspace{-0.cm}{
 \adjustbox{scale=1.3,center}{%
\begin{tikzpicture}
\draw[very thick, opacity=.75, ->, rounded corners] (3,1.2)  -- ++(1.2, 0.0) -- ++(-1.3,-1.) -- ++(1.2,0)  -- ++(-1.3,-1.) -- ++(1.2, 0.0)  ;
\draw[very thick, opacity=.75, ->, rounded corners] (3,1.4)  -- ++(1.2, 0.0)   ;
\draw[very thick, opacity=.75, ->, rounded corners] (6.3,1.2)  -- ++(1.2, 0.0) -- ++(-1.3,-1.) -- ++(1.2,0)  -- ++(-1.3,-1.) -- ++(1.2, 0.0)  ;
\draw[very thick, opacity=.75, ->, rounded corners] (6.3,-1.)  -- ++(1.1, 0.0)   ;
\draw[very thick,opacity=.75, red] (2.,1.3) ellipse (7mm and 3.5mm);
\draw[very thick,opacity=.75, red] (5.1,1.3) ellipse (7mm and 3.5mm);
\draw[very thick,opacity=.75, red] (5.1,-.7) ellipse (7mm and 3.5mm);
\draw[very thick,opacity=.75, red] (8.1,-.7) ellipse (7mm and 3.5mm);
\end{tikzpicture}}
}
}}
in which we eliminate the skew-symmetric and trace components of the matrix from the two scalars in the first space in the second row. 
The diagram involves \(S^{0,1}=(\iota, \mskw)\) ($\iota$ mapping to the diagonal component and $\mskw$ mapping to the skew-symmetric components; the operator acts on two components and adds up them, i.e., $(\iota, \mskw)(\alpha, \beta)=\iota\alpha+\mskw\beta$) and other operators that complete the commuting diagram. 

Using a similar elimination process as in \ref{sec:conformal}, we see that the space $H^{q-1}\otimes (\mathbb{R}\times \mathbb{R})$ cancels the trace and skew-symmetric parts of $H^{q-1}\otimes \mathbb{M}$, leading to a trace-free and symmetric matrix. The difference from the 3D examples in \ref{sec:conformal} is that the ``1-forms'' of the last row, i.e., ${H^{q-3}\otimes \mathbb{M}}$ cannot be completely eliminated from the diagram (only the trace and skew-symmetric components are eliminated by $H^{q-3}\otimes (\mathbb{R}\times \mathbb{R})$ that $S^{1, 2}$ connects to). Therefore the tracefree and symmetric components of $H^{q-3}\otimes (\mathbb{R}\times \mathbb{R})$ also remain in the derived complex
\[
\begin{tikzcd}
0 \arrow{r}
& H^{q}\otimes \mathbb{V}
\arrow{r}{D^{0}}
& 
\left(
\begin{array}{c}
H^{q-1}\otimes(\mathbb{S}\cap \mathbb{T})\\[4pt]
H^{q-3}\otimes(\mathbb{S}\cap \mathbb{T})
\end{array}
\right)
\arrow{r}{D^{1}}
& H^{q-3}
\arrow{r}
& 0.
\end{tikzcd}
\]
Here \(D^{0}\) maps \(u\) into two components: one via $\dev\deff\grad$ and one via $ \grad S_{\dagger}^{0, 2}\grad S_{\dagger}^{0, 1}\grad $, where $S_{\dagger}^{0, 2}$ and $S_{\dagger}^{0, 1}$ are the adjoint of $S^{0, 2}$ and $S^{0, 1}$ (with respect to the Frobenius norm), respectively. Similarly, ${D}^1$ maps two components to a scalar function: the first by a third order $\rot$ operator and the second by a $\rot$. 

We thus fix the {\it conformal Korn inequality in 2D} by adding a third order derivative term:
 \begin{align*}
\|u\|_{3}\leq C(\|\dev\deff\grad u\|_2&+\| \grad S_{\dagger}^{0, 1}\grad S^{0, 2}_{\dagger}\grad u\|),\\
& \quad \forall u\perp [\ker(\dev\deff\grad)\cap \ker(\grad S_{\dagger}^{0, 1}\grad S_{\dagger}^{0, 2}\grad )].
\end{align*}

In 3D, the Poincaré inequality effectively uses all nine entries of \(\nabla \mathbf{u}\), Korn uses six, and the 3D conformal Korn uses five. An open question is to find the minimal set of linear functionals \(\ell_{i}\) such that the generalised Korn inequality
\[
\|\mathbf{u}\|_{H^{1}}
\le C \Bigl(\sum_{i=1}^{N}\|\ell_{i}(\nabla \mathbf{u})\|_{L^{2}} + \|\mathbf{u}\|_{L^{2}}\Bigr)
\]
holds for some finite \(N\). For further discussion on this topic, see \cite{chipot2021inequalities}.

\subsection{Other applications in linear and nonlinear analysis}

The Poincaré–Korn inequalities illustrate how cohomological structures can lead to analytic results. Many other theorems in linear and nonlinear analysis follow the same principle.

For instance, in linear analysis we encounter \emph{regular decompositions}. Instead of the orthogonal Hodge decompositions for \(L^2\) spaces (as in \eqref{hodge-decomposition-example}), one obtains analogous decompositions in more regular spaces. A typical example is
\[
H(\mathrm{curl},\Omega)=\nabla H^1(\Omega)+\bigl[\,H^1(\Omega)\bigr]^3,
\]
and, more generally, for the de~Rham complex,
\[
H(d,\Omega)=dH^1(\Omega)+\bigl[\,H^1(\Omega)\bigr]^n,
\]
with similar statements holding for other complexes.  

Similarly, the \(\mathrm{div}\)--\(\mathrm{curl}\) lemma from compensated compactness theory essentially relies on the underlying algebraic structures in complexes. Its generalisations to a \(d\)--\(d^*\) lemma appear in \cite{pauly2019global,arnold2021complexes}.

\section{Solid mechanics}\label{sec:solid}

This section focuses on the connections between continuum mechanics (particularly solid mechanics) and differential complexes. Topology interacts with elasticity in several ways. Compatibility conditions (and related concepts such as stress functions) are well known to be closely related to the topological properties of the elastic body. See \cite{yavari2020applications} for a discussion of topological obstructions to compatibility in both linear and nonlinear elasticity. Furthermore, the failure of compatibility conditions can serve as an indication of defects (e.g., dislocations and disclinations) in the material. This idea has been extensively developed in the continuum mechanics literature, from pioneers such as Kröner \cite{kroner1985incompatibility,kroner1981continuum,kroner1963dislocation,kroner1995dislocations} and Nye \cite{nye1953some} to more geometric treatments in \cite{yavari2012riemann,yavari2013riemann}. Nevertheless, we present this topic here from the perspective of differential complexes and the BGG machinery.
In fact, differential complexes encode additional concepts in solid mechanics. Here are some examples.
\begin{itemize} \item \textit{Primal vs.\ dual variational principles.}
In the compatible theory, primal formulations correspond to the Hodge–Laplacian of 0-forms (where “0-forms” refers to the space in the complex at index 0, and similarly for other indices). Dual formulations (e.g., the Hellinger–Reissner principle) correspond to $n$-forms. Intrinsic formulations \cite{ciarlet2009intrinsic} use compatible strain tensors that are closed 1-forms ($\ten e$ satisfying $\inc \ten e=0$). Meanwhile, the Hu–Washizu principle involves three spaces in the complex.

\item \textit{Standard vs.\ microstructure models.}
BGG complexes model classical (displacement-based) theories, whereas \emph{twisted} complexes also include microscopic structures (e.g., local rotations). The BGG machinery introduces a cohomology-preserving projection that removes torsion components to yield standard models (for instance, eliminating pointwise rotations from Cosserat models to recover linear elasticity).

\item \textit{Rigidity and existence.}
Rigidity and existence (sometimes known as the fundamental theorem of Riemannian geometry \cite{ciarlet2013linear}) can be phrased in terms of the exactness of complexes. This aligns with the broader perspective discussed in the Introduction, where existence and uniqueness of solutions are viewed via homological methods. Several existence and uniqueness results in geometry and mechanics can be formulated using complexes and their exactness. Formally, consider an exact sequence
$$
\begin{tikzcd}
R \arrow{r}{f_{0}}&A \arrow{r}{f_{1}}& B   \arrow{r}{f_{2}}&C,
\end{tikzcd}
$$
any object $u:=f_{1}(A)$ satisfies a {\it compatibility} condition $f_{2}(u)=0$;  for any compatible object $v$ satisfying  $f_{2}(v)=0$, there exists an object (stress function, potential) $w\in A$ such that $f_{1}(w)=v$, and $w$ is unique up to $f_{0}$ of objects in $R$ (rigidity).

\item \textit{Dimension reduction and (boundary) trace complexes.}
Trace complexes correspond to lower-dimensional (e.g., plate or beam) models. Hence, dimension reduction can be incorporated into the BGG framework. From an analytic perspective, such models often arise as 
$\Gamma$-limits of higher-dimensional theories (see \eqref{diagram:models} below). 

\item \textit{Multi-dimensional or mixed-dimensional models.}
Bodies of different dimensions can be coupled in ways that incorporate richer physics and additional interface variables. This leads to the construction of \v{C}ech double complexes \cite{boon2023mixed,boon2022hodge}. \end{itemize}

These connections not only provide new perspectives on existing models but also indicate the potential for investigating new ones. For instance, to construct a 2D Cosserat-type plate model with defects, one could consider the Hodge–Laplacian acting on 1-forms (\emph{defects}) in the trace complex (\emph{2D}) of the twisted complex (\emph{Cosserat}) on a surface (\emph{plates}). By selecting different Lie structures and complexes, one might obtain new continuum models with naturally encoded symmetries. This recalls the \textit{Erlangen programme}, wherein geometries are characterized by particular Lie structures.



\subsection{Classical elasticity and compatible continuum: a nonlinear complex} 

The (linear) elasticity complex \eqref{elasticity} can be viewed as the linearisation of a sequence involving nonlinear operators:
\begin{equation}\label{nonlinear-elasticity}
\begin{tikzcd}
0 \arrow{r}{} &
\text{displacement} \arrow{r}{\mathscr{D}^{0}} &
\text{strain}\arrow{r}{\mathscr{D}^{1}} &
\text{stress} \arrow{r}{\mathscr{D}^{2}} &
\text{load} \arrow{r}{} &
0.
\end{tikzcd}
\end{equation}
As we explain below, this sequence is also a complex in the sense that composing two consecutive  operators yields zero. Although cohomology is not defined for complexes with nonlinear operators (as the zero sets and the image of the nonlinear operators are not vector spaces), the notion of exactness still applies. Next, we explain \eqref{nonlinear-elasticity} in detail. This will also set up the background used in the rest of this section. 

In continuum theories, we consider a body initially occupying a region
\(\widehat{\Omega}\subset \mathbb{R}^{3}\), called the \emph{initial configuration}.
The body is deformed to a new region \(\Omega \subset \mathbb{R}^{3}\), called the \emph{current configuration}.
A point \(\widehat{\vec{x}} \in \widehat{\Omega}\) is mapped to \(\vec{x}\in \Omega\) via a diffeomorphism \(\vec{\varphi}\), i.e.\ \(\vec{x}:=\vec{\varphi}(\widehat{\vec{x}})\).
The \emph{displacement} is the vector from the original to the new point,
\[
\vec{u}(\widehat{\vec{x}})=\vec{\varphi}(\widehat{\vec{x}})-\widehat{\vec{x}}
=
(\vec{\varphi}-\ten{I})(\widehat{\vec{x}}).
\]
The \emph{deformation gradient} is
\[
\ten{F}=\widehat{\nabla}\vec{\varphi}
=\frac{\partial \vec{\varphi}}{\partial \widehat{\vec{x}}}.
\]
We assume the initial configuration is equipped with the Euclidean metric \(\ten{g}_{0} := \ten{I}\).
Then the deformation induces the new metric
\[
\ten{G}
:=
(\widehat{\nabla}\vec{\varphi}) \cdot \ten{I}\cdot (\vec{\varphi}\,\widehat{\nabla})
=
(\widehat{\nabla}\vec{\varphi})\cdot(\vec{\varphi}\,\widehat{\nabla}),
\]
known as the (right) Cauchy–Green deformation tensor.  
Here, \(\widehat{\nabla}\vec{\varphi}\) denotes the \emph{column-wise} gradient (the gradient operator acts on each component of \(\vec{\varphi}\) to form columns), while \(\vec{\varphi}\,\widehat{\nabla}\) denotes the {row-wise} gradient.  
The tensor \(\ten{G}\) measures distances on the deformed body: if two points are separated by \(\Delta \widehat{\vec{x}}\) before deformation, their initial distance squared is \(\Delta \widehat{\vec{x}}^{T}\cdot \ten{I} \cdot \Delta \widehat{\vec{x}}\), while their post-deformation distance squared is \(\Delta \widehat{\vec{x}}^{T}\cdot \ten{G}\cdot \Delta \widehat{\vec{x}}\).  
In differential-geometric terms, \(\ten{G} = (\vec{\varphi}^{-1})^{\ast}\ten{I}\).  
We define the \emph{change of metric} or \emph{strain} as
\[
\ten{e}
=
(\widehat{\nabla}\vec{\varphi})\cdot(\vec{\varphi}\,\widehat{\nabla})
-\ten{I},
\]
so that \(\ten{e}\) encodes how the Euclidean metric changes under deformation.
\begin{figure}
\includegraphics[width=0.6\textwidth]{./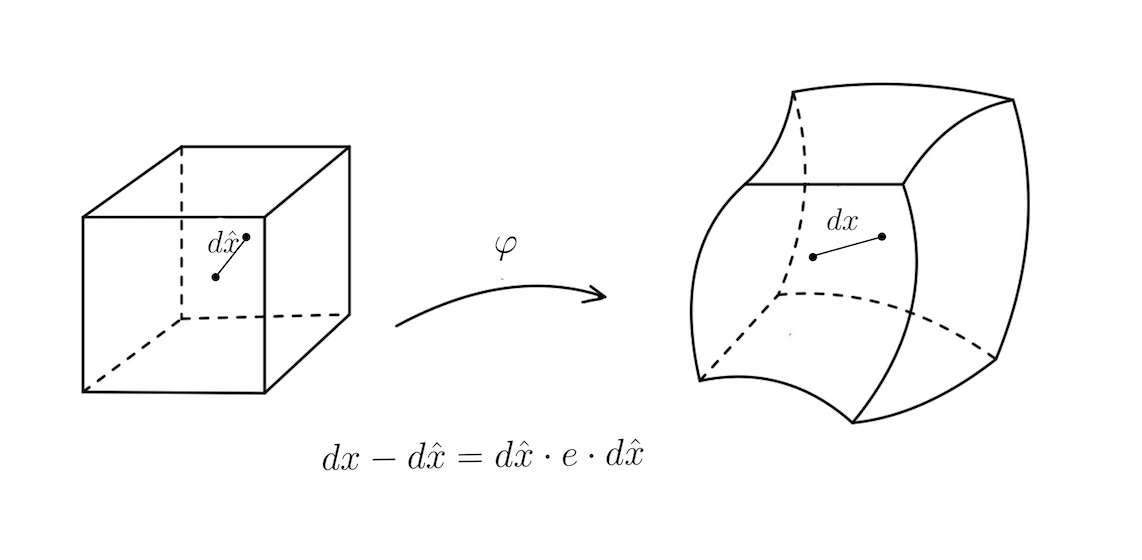}
\caption{Embedding (deformation) of a body induces a change of metric, i.e., strain. }
\end{figure}

We can also express \(\ten{G}\) directly in terms of \(\vec{u}\).  Setting \(\vec{\varphi}(\widehat{\vec{x}}) = \widehat{\vec{x}} + \vec{u}(\widehat{\vec{x}})\), we introduce
\[
\mathscr{D}^{0}(\vec{u})
=
\bigl[\widehat{\nabla}(\vec{u}(\widehat{\vec{x}})+\widehat{\vec{x}})\bigr]
\,\cdot\,
\bigl[(\vec{u}(\widehat{\vec{x}})+\widehat{\vec{x}})\,\widehat{\nabla}\bigr]
-\ten{I}=\widehat{\nabla}\vec{u}\cdot \vec{u}\widehat{\nabla}+\widehat{\nabla}\vec{u}+ \vec{u}\widehat{\nabla}.
\]
Under a \emph{small deformation}, i.e.\ \(\vec{\varphi}(\widehat{\vec{x}})
= \widehat{\vec{x}} + \varepsilon\,\vec{v}(\widehat{\vec{x}}) + \mathfrak{o}(\varepsilon)\),
we obtain
$$
(\widehat{\nabla}\vec{\varphi})\cdot(\vec{\varphi}\,\widehat{\nabla})-\ten I=\varepsilon(\widehat{\nabla}\vec{v} + \vec{v}\,\widehat{\nabla})+{o}(\varepsilon).
$$
Hence, the first-order term
\[
\mathscr{D}^{0}_{\mathrm{lin}}(\vec{u})
=
\widehat{\nabla}\vec{u} +\vec{u}\,\widehat{\nabla}
\]
is the standard \emph{linearised strain}.  In the small-deformation regime, we usually identify \(\widehat{\nabla}\) with \(\nabla\) and write \(\nabla\vec{u}+\vec{u}\nabla\).

The next operator \(\mathscr{D}^{1}\) sends a (nonlinear) strain \(\ten{e}\) (i.e., a metric difference) to its associated \emph{Einstein tensor}.\footnote{%
  In 3D, vanishing of the Einstein tensor implies vanishing of the Riemann curvature, hence “flatness.”
}
Its linearisation is \(\mathscr{D}^{1}_{\mathrm{lin}} = 2\,\inc\), where \(\inc\) is the \emph{incompatibility operator} (sometimes called the Saint–Venant operator).  Given \(\ten{e} = (\widehat{\nabla}\vec{\varphi})\cdot(\vec{\varphi}\,\widehat{\nabla}) - \ten{I}\), we have
\(\mathscr{D}^{1}(\ten{e})=0\) precisely because the pullback of a Euclidean metric is flat.  Thus,
\(\mathscr{D}^{1}(\ten{e})=0\) is the \emph{compatibility condition} for strain.  In the small-strain limit, \(\mathscr{D}^{1}_{\mathrm{lin}}(\ten{e})=0\) becomes the well-known \emph{Saint–Venant compatibility condition}.

\medskip

In elasticity, the Cauchy stress \(\ten{\sigma}\) is a symmetric matrix field whose action \(\ten{\sigma}\cdot \vec{n}_{\partial \mathcal{O}}\) on the boundary of a microelement \(\mathcal{O}\) gives the internal force.  Physically, \(\ten{\sigma}\) can be determined from \(\ten{e}\) via a constitutive law (e.g., Hooke’s law).  Mathematically, stress and strain are dual to each other via the energy inner product \(\int \ten{\sigma} : \ten{e}\,d{x}\).  Here, the divergence \(\nabla \cdot \ten{\sigma}\) corresponds to the force; hence, \(\nabla \cdot \ten{\sigma}=0\) describes force-free fields.  A \emph{stress function} (such as the Beltrami stress function) provides a potential for stress fields satisfying equilibrium.  In 3D, one classically has \(\ten{\sigma} = \nabla \times \ten{\varphi} \times \nabla\) for some symmetric tensor \(\ten{\varphi}\).  This is one of the exactness properties of the elasticity complex \eqref{elasticity}.

\smallskip

Classical elasticity admits several variational formulations: the \emph{displacement method} (using 0-forms) minimising the energy functional
$$
\inf_{\vec u\in [H^{1}]^{n}}\frac{1}{2}\|\deff \vec u\|_{A}-(\vec f, \vec u), 
$$
 the \emph{Hellinger–Reissner principle} seeking critical point $\ten \sigma\in H(\div, \mathbb{S})$, $\vec u\in [L^{2}]^{n}$ of the saddle point functional
 $$
\inf_{\vec u\in [H^{1}]^{n}}\sup_{\ten \sigma\in H(\div, \mathbb{S})}\frac{1}{2}\|\ten \sigma\|^{2}_{C^{-1}}+(\div\ten \sigma, \vec u)-(\vec f, \vec u), 
$$
  and the \emph{intrinsic elasticity model} minimising the energy functional
  $$
\inf_{\ten e\in H(\inc, \mathbb{S}): \inc \ten e=0}\frac{1}{2}\|\ten e\|_{A}-(\vec f, \mathcal{F}(\ten e)), 
$$
  where $\mathcal{F}(\ten e)$ is the Cec\`aro-Volterra operator satisfying $\deff\mathcal{F}(\ten e)=\ten e$. The \emph{Hu–Washizu principle} introduces a three-field variational formulation in which displacement $\vec u$, strain $\ten e$, and stress $\ten \sigma$ (0-, 1-, and 2-forms) are treated as independent fields. A typical saddle-point form is
   $$
\inf_{\vec u\in [H^{1}]^{n}, \ten e\in L^{2}\otimes \mathbb{S}}\sup_{\ten \sigma\in H(\div, \mathbb{S})}\frac{1}{2}\|\ten e\|^{2}_{C}-(\ten \sigma, \ten e)+(\div\ten \sigma, \vec u)-(\vec f, \vec u), 
$$
  

\smallskip

As we have seen, many key notions in elasticity, such as displacement, strain, stress, equilibrium, compatibility, fit naturally into the (nonlinear) diagram \eqref{nonlinear-elasticity} and its linearised version \eqref{elasticity}.  In the discussion above, we highlighted the first part (displacement and strain) and the last part (stress and load), noting the duality between strain and stress.  However, we have not yet discussed \(\mathscr{D}^{1}\) (or its linearised analog \(\inc\)) beyond observing the compatibility condition \(\inc\,\ten{e}=0\).  In the next subsection, we turn to the idea that violations of compatibility can model \emph{continuum defects}, so that the strain and stress spaces in \eqref{nonlinear-elasticity} (possibly not coming from a displacement) represent defect fields in a natural way.

\medskip
{\noindent\bf Exactness of the nonlinear complex.}
Although cohomology is not defined for the nonlinear complex, we can also talk about exactness, i.e., the zero set of a map is the image of the previous operator. 

The nonlinear elasticity complex in $\mathbb{R}^{3}$ \eqref{nonlinear-elasticity} can be reformulated as follows using geometric terms:
$$
\begin{tikzcd}[column sep=1.4cm]
\mbox{rigid body motion} \arrow{r}{\subset} & \mbox{map $\mathbb{R}^{3}$ to $ \mathbb{R}^{3}$} \arrow{r}{\varphi\mapsto \varphi^{\ast}g_{0}-g_{0}} & \mbox{change of metric} \arrow{r}{\mathrm{Ricci}} & \mbox{curvature}  \arrow{r}{\mathrm{Bianchi}} & \mathrm{(co)vector}
\end{tikzcd}
$$
The exactness of the nonlinear complex corresponds to various classical results \cite{hu2022nonlinear}.
 The exactness at ``map from $\mathbb{R}^{3}$ to $\mathbb{R}^{3}$'' corresponds to the rigidity result: If a map does not change the Euclidean metric, then it is a rigid body motion. The exactness at the ``change of metric'' corresponds to the ``fundamental theorem of Riemannian geometry'' \cite{ciarlet2013linear}: If a metric is flat, then it can be locally written as (a pullback of) the Euclidean metric. The exactness at curvature (a tempting question -- whether a tensor is the curvature of some metric if it satisfies certain Bianchi identities) is not well defined, as to formulate the Bianchi identity, one already needs a metric. See \cite{rendall1989insufficiency} for detailed discussions.

\subsection{Higher-order forms: defects and incompatibility}\label{sec:defects}

Defects in materials refer to imperfections that disrupt the regular arrangement of atoms or molecules within a solid. These defects can be classified into several types, including point defects, dislocations, and disclinations, each affecting the material’s properties in different ways. Point defects, such as vacancies and interstitials, occur when atoms are missing or misplaced in the crystal lattice, leading to changes in material properties like conductivity or diffusion. Dislocations are line defects that represent the misalignment of atomic planes and are central to plastic deformation, influencing a material’s strength. Disclinations, on the other hand, are rotational defects that occur in the lattice structure. 
These defects are crucial in understanding the mechanical, electrical, and thermal behaviours of materials, as well as their response to external stresses, and they play a significant role in the design of advanced materials with tailored properties. In a continuum theory, defects are often described by the violation of compatibility conditions. There is a vast literature on this topic, see, e.g., \cite{kroner1985incompatibility,anderson2017theory,kroner1981continuum,steinmann2015geometrical,yavari2013riemann,yavari2012riemann}. We by no means provide a comprehensive review, but we aim to show that (BGG) complexes precisely encode this idea of modelling and the algebraic structures therein.


 

In a continuum theory, defects are often described by the violation of compatibility conditions. We first summarize some key concepts and relations in classical elasticity and continuum defect theory in the diagrams below:
  \begin{equation}\label{twisted-diagram}
\begin{tikzcd}
 \mbox{displacement} \arrow{r}{\grad} &\mbox{distortion}\arrow{r}{\curl} &\mbox{dislocation density tensor} \arrow{r}{\div}  &\cdots \\
 \mbox{rotation}\arrow{r}{\grad} \arrow[ur, "-\mskw"]&\mbox{wryness tensor, Nye tensor} \arrow{r}{\curl} \arrow[ur, "\mathcal{S}"]&\mbox{disclination density, stress} \arrow{r}{\div}\arrow[ur, "2\vskw"] &\mbox{load},
 \end{tikzcd}
\end{equation}
  \begin{equation}\label{elasticity-complex-variables}
\begin{tikzcd}
 \mbox{displacement}  \arrow{r}{\deff} & \mbox{strain} \arrow{r}{\inc} & \mbox{internal stress, stress}    \arrow{r}{\div} & \mbox{load}. 
 \end{tikzcd}
\end{equation}
The notations are summarized in the following diagrams:
  \begin{equation}\label{elasticity-diagram-variables}
\begin{tikzcd}
 \vec{u} \arrow{r}{\grad} &\ten  \beta ~ (\ten  e=\ten  \beta-\ten  \omega)\arrow{r}{\curl} &\ten \alpha \arrow{r}{\div}  &\cdots \\
  \vec{\varpi}\arrow{r}{\grad} \arrow[ur, "-\mskw"]& \ten  \kappa \arrow{r}{\curl} \arrow[ur, "\mathcal{S}"]& \ten \theta, \ten  \sigma \arrow{r}{\div}\arrow[ur, "2\vskw"] &\vec{f},
 \end{tikzcd}
\end{equation}
  \begin{equation}\label{elasticity-complex-variables}
\begin{tikzcd}
 \vec{u} \arrow{r}{\deff} &\ten  e\arrow{r}{\inc} &\ten \eta, \ten  \sigma   \arrow{r}{\div} & \vec{f}. 
 \end{tikzcd}
\end{equation}

Now we explain the diagrams in the linearised theory. We refer to \cite{steinmann2015geometrical,yavari2013riemann} for the definitions below.

 Let $\vec\varphi: \Omega\to \vec\varphi (\Omega)$ be an isomorphism with $\Omega\subset \mathbb{R}^{3}$, and  $\vec{u}(\vec x):=\vec\varphi(\vec x)- \vec x$ be the displacement of a continua. In the classical elasticity theory, the \emph{distortion tensor} $ \ten\beta$ is defined as
$
\ten  \beta:=\vec{u}\nabla,
$
satisfying the compatibility condition $\ten \beta\times \nabla=0$. 
The {distortion tensor} can be decomposed into a symmetric part $\ten  e$ and a skew symmetric part $\ten  \omega$, i.e., $\ten  \beta=\ten  e+\ten \omega$.
Here $\ten  e$ is the classical \emph{strain tensor} and $\ten \omega$ is the \emph{rotation tensor}. Specifically, we have
$$
\ten  e=\deff \vec{u}:= \frac{1}{2}(\nabla\vec{u}+\vec{u}\nabla),\quad \mbox{and} \quad \ten  \omega=\skw( \vec{u}\nabla)= \frac{1}{2}(\vec{u}\nabla-\nabla\vec{u}).
$$
The rotation tensor  $\ten  \omega$ can be obtained as the matrix version of the axial vector $\vec{u}\times \nabla$, i.e., 
$
\ten \omega= \mskw (\vec{u}\times \nabla).
$
We denote the axial vector of $\ten \omega$ by $
\vec{\varpi}:=\vskw{\ten \omega}:=(\mskw)^{-1}\ten \omega=\vec{u}\times \nabla.
$ 
  The \emph{wryness tensors} $\ten \Xi$ (the name is often used in the context of Cosserat and micropolar continuum, see, e.g., \cite{pietraszkiewicz2009natural}) is defined as 
\begin{equation}\label{def:Xi}
\ten  \kappa:=\vec{\varpi}\nabla=\nabla\times \vec{u}\nabla,
\end{equation}
 and satisfies the compatibility condition 
\begin{align}\label{compatibility-Xi}
\ten  \kappa\times \nabla=0.
\end{align}
 The wryness tensors tensor is trace-free: $\tr \ten  \kappa=0$, as $\tr(\nabla\times \vec{u}\nabla)=\nabla\cdot \nabla\times \vec{u}=0$.

By definition, we have $\nabla\times \ten {e}=\frac{1}{2}\ten  \kappa$. With this identity, the Saint-Venant compatibility condition for $\ten  e$, $\nabla\times \ten e\times \nabla=0$, is equivalent to the compatibility condition \eqref{compatibility-Xi} for $\ten  \kappa$. 

The above definitions in the context of classical elasticity describe a compatible deformation. When defects are present, some compatibility conditions may be violated. The general philosophy in defect theory is to use the violation of compatibility conditions to model the defects. The violation of compatibility conditions, viewed as defect (dislocation, disclination etc.) density, is thus encoded in higher-order forms in a differential complex.
We review this idea in the rest of this section.

Dislocations are detected by {\it Burgers circuits}: one draws a closed circuit in the reference configuration. If the deformation is compatible, 
then the deformed circuit remains closed. However, this is not necessarily the case if dislocations are present. The gap is called the {\it Burgers vector}. On the mesoscopic level, the Burgers circuit consists of a loop of atoms, and the gap (the Burgers vector) describes the misplacements of the atoms. 

In the presence of defects, it is assumed that the distortion tensor $\ten \beta$ is decomposed into an elastic part plus a plastic part: $\ten \beta=\ten\beta^{e}+\ten\beta^{p}$ (additive for small deformation). The elastic part would vanish if all loads were removed. It stores elastic energy and is governed by some elastic stress--strain law (e.g., Hookean or hyperelastic). The plastic part reflects permanent molecular-level rearrangements -- slip, twinning, dislocation motion in crystals, etc. Removing the load does not necessarily make $\ten\beta^{p}$ vanish. 
This portion evolves according to plastic flow rules (yield conditions, hardening laws, etc.).

Under the above decomposition, $\ten\beta$ still satisfies the compatibility condition $\ten \beta\times\nabla=0$. However, the elasticity part  $\ten \beta^{e}\times\nabla=-\ten \beta^{p}\times\nabla$ may be nonzero. 
The Burgers vector is now obtained as 
$$
\vec b=\int_{\mathcal{C}} \ten \beta^{p}\, d\vec x=-\int_{\mathcal{C}} \ten \beta^{e}\, d\vec x=\int_{\mathcal{S}} (\ten \beta^{p}\times \nabla)\, d\ten S,
$$
where $\mathcal{S}$ is an area enclosed by $\mathcal{C}$.  The \emph{dislocation density tensor} is then defined by 
$$
\ten  \alpha:=\ten \beta^{p}\times \nabla=-\ten \beta^{e}\times \nabla,
$$
which represents the distribution of dislocations in the material.
Clearly, the dislocation density is divergence-free (as a source-free field): $\ten \alpha\cdot\nabla=0$. This corresponds to the fact that dislocations do not have a net source. 
Then the \emph{incompatibility tensor} $\ten  \eta$ involved in the Saint-Venant condition is
$$
\ten  \eta= \nabla\times\ten  \alpha.
$$
Note that $\ten \eta$ is automatically symmetric as $\inc$ maps any skew-symmetric matrix to zero.


Furthermore, $\ten\omega$ decomposes into elastic and plastic parts $\ten\omega=\ten \omega^{e}+\ten\omega^{p}$, with $\ten \omega^{e}=\skw \ten\beta^{e}$. For a macroscopically stress-free configuration, the elastic distortion consists of only the elastic rotation, i.e., $\ten \beta^{e}=\ten \omega^{e}$, because the symmetric part, i.e., the linearised strain $\sym\ten\beta^{e}$ is proportional to the elastic stress, which vanishes by assumption. Thus in the case of stress-free configuration,
 the Nye tensor \cite{nye1953some}
$\ten \kappa^{e}:=\vec \varpi^{e}\nabla$ (here $\vec \varpi^{e}:=\vskw\ten \omega^{e}$ is the axial vector of the elastic rotation tensor $\ten \omega^{e}$) and the dislocation density $\vec \alpha=\ten \beta^{e}\times \nabla$ satisfies the Nye formula: $\ten \alpha=\ten (\ten\kappa^{e})^{T}-\tr(\ten \kappa^{e})I$ . This is exactly a commuting identity of the diagram \eqref{elasticity-diagram-variables}.

The discussions above complies with the general philosophy of using the violation of compatibility conditions to model defects. This idea can be continued, leading to high-order continuum defect theories.  In particular, $\ten \kappa$ obtained by $\vec u$ via \eqref{def:Xi} automatically satisfies the compatibility $\ten \kappa \times \nabla=0$. We can further assume that we are given $\ten \kappa$ that is not derived from $\vec u$ or $\vec \varpi$ and does not necessarily satisfy the compatibility condition.

Disclinations describe rotational mismatches in a crystalline or continuum structure, leading to angular closure failures. In disclination theory, the Frank vector measures this rotational closure failure: if we draw a loop in the continuum and parallel transport a vector along it back to the origin, that vector may be rotated. The change in angle is captured by the Frank vector, which aligns exactly with the geometric definition of Riemannian curvature. Consequently, curvature provides a natural geometric interpretation for disclinations.

In compatible theories, $\ten \kappa:= \vec w\nabla$ satisfies the compatibility condition $\ten \kappa\times \nabla=0$. In generalised continuum, $\ten \kappa=\ten \kappa^{e}+\ten \kappa^{p}$ has an elastic-plastic decomposition, and $\ten\theta:=\ten \kappa^{p}\times \nabla$ is defined as the disclination (second-order) tensor \cite[Remark 1.7]{steinmann2015geometrical}.  This exactly corresponds to the curvature tensor in \eqref{elasticity-diagram-variables}. 
}

\subsection{Twisted complexes: microstructure and micropolar models}

In 1D, 2D and 3D, the Hodge--Laplacian problems of the twisted de~Rham and BGG complexes also correspond to various beam, plate and elasticity models. As we shall see, the twisted complex encode extra degrees of freedom (unknowns) characterising microstructure of materials. The BGG machinery for deriving the BGG complexes from the twisted complexes thus can be interpreted as eliminating these degrees of freedom. 

An often-used assumption in beam models is that a line orthogonal to the midline remains orthogonal after deformation (see Figure \ref{fig:beam}).
\begin{figure}
 \includegraphics[width=7cm]{./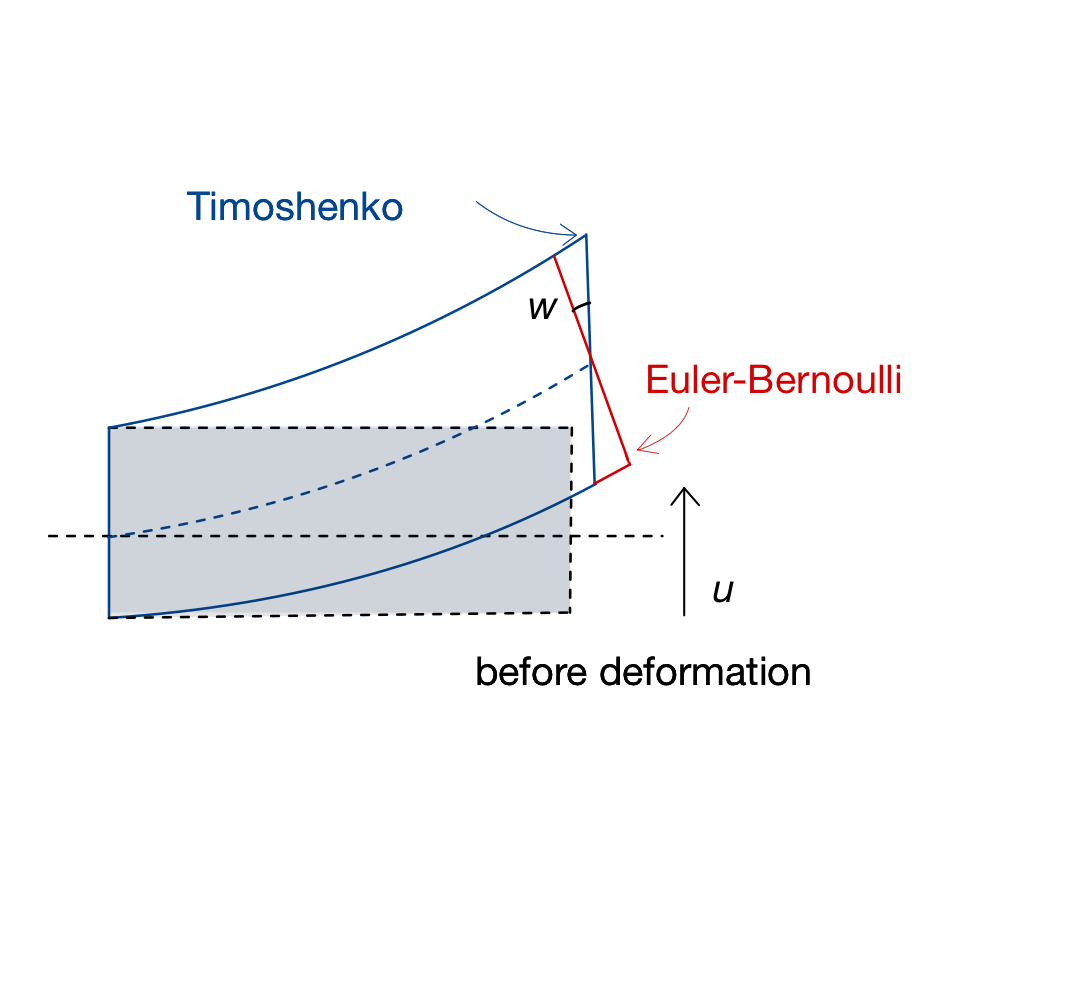}
 \vspace{-2cm}
 \caption{Beam models. In the Euler--Bernoulli beam, a line orthogonal to the mid-surface remains orthogonal after the deformation. Therefore, the vertical displacement $u$ is the only unknown. In the Timoshenko beam, the orthogonality condition is dropped. Therefore, the model involves an additional rotational variable $w$.}
  \label{fig:beam}
\end{figure}
 Under this assumption, the beam can be described by a single unknown, i.e., the vertical displacement. This leads to the Euler--Bernoulli beam theory, where the solution is obtained by minimising the energy functional 
\(\|\tfrac{d^2}{dx^2} w\|_{C}^{2}\), where $\|\cdot\|_{C}$ is a weighted $L^{2}$ inner product involving physical parameters of the material.
In the language of complexes, this energy functional corresponds to the 0th Hodge--Laplacian of the BGG complex in 1D:
\[
\begin{tikzcd}
0 \arrow{r} &H^{2}  \arrow{r}{\frac{d^{2}}{dx^{2}}} &L^{2} \arrow{r}{} & 0.
\end{tikzcd}
\]

If we drop the mid-surface orthogonality assumption, we introduce one more variable \(w\), representing the rotation angle. This yields the Timoshenko beam energy functional
\begin{equation}\label{eqn:timoshenko-energy}
\mu_{c}\bigl\|\tfrac{d}{dx} u - w\bigr\|_{C_{1}}^{2} + \bigl\|\tfrac{d}{dx}w\bigr\|_{C_{2}}^{2},
\end{equation}
which corresponds to the 0th Hodge--Laplacian of the twisted de~Rham complex in 1D:
\[
\begin{tikzcd}[ampersand replacement=\&]
0\arrow{r}\&
\begin{pmatrix}
H^{1} \\
H^{1}
\end{pmatrix}
 \arrow{r}{
 \begin{pmatrix} \frac{d}{dx} & -I \\ 0 & \frac{d}{dx} \end{pmatrix}
 }\&  
\begin{pmatrix}
L^{2}\\
L^{2}
\end{pmatrix} 
\arrow{r}{} 
\&0.
\end{tikzcd}
\]
Here, \(\mu_{c}\) is a constant characterizing the coupling between vertical displacement and rotation. When \(\mu_{c} = 0\), the system decouples; when \(\mu_{c} \to \infty\), we enforce \(w = \tfrac{d}{dx}\,u\), which reduces to the Euler--Bernoulli beam.

\medskip

A similar situation occurs in 2D. The Hessian complex
\begin{equation}\label{hessian-2D}
\begin{tikzcd}
0 \arrow{r}
 & H^{2}
 \arrow{r}{\hess}
 & H(\rot; \mathbb{S}) 
 \arrow{r}{\rot} 
 & L^{2}\otimes \mathbb{V}
 \arrow{r}{} 
 & 0
\end{tikzcd}
\end{equation}
and its twisted de~Rham counterpart
\begin{equation}\label{twisted-elasticity-2D}
\begin{tikzcd}
0\arrow{r}&
\begin{pmatrix}
H^{1} \\
H^{1}\otimes \mathbb{V}
\end{pmatrix}
 \arrow{r}{d_V^{0}}
 &  
\begin{pmatrix}
H(\rot) \\
H(\rot)\otimes \mathbb{V}
\end{pmatrix} 
 \arrow{r}{d_V^{1}}
 &   
\begin{pmatrix}
L^{2}\\
L^{2}\otimes \mathbb{V}
\end{pmatrix}
\arrow{r}{} &0,
\end{tikzcd}
\end{equation}
where
\[
d_V^{0} = \begin{pmatrix}
\grad & -I\\[6pt]
0 & \grad
\end{pmatrix},
\quad
d_V^{1} = \begin{pmatrix}
\rot & -\operatorname{sskw}\\[6pt]
0 & \rot
\end{pmatrix},
\]
respectively correspond to the Kirchhoff plate model and a Reissner--Mindlin plate model (see, e.g., \cite[p.~1279]{arnold1989uniformly}). The former's energy functional (the 0th Hodge--Laplacian) is \(\|\hess u\|_{B}^{2}\), while the latter is 
\(\mu_{c}\|\grad u - \vec{\phi}\|_{A}^{2} + \|\nabla \vec{\phi}\|_{B}^{2}\).
Again, \(\mu_{c}=0\) decouples \(u\) and \(\vec{\phi}\); as \(\mu_{c}\to \infty,\) we force \(\vec{\phi} = \nabla u\), and the Reissner--Mindlin solution converges to that of the Kirchhoff plate.
The Kirchhoff plate model can be also obtained from the Reissner--Mindlin plate as thickness tends to zero \cite{arnold2002range}.

Cosserat (or micropolar) models, proposed by the Cosserat brothers in the early 20th century~\cite{cosserat1909theorie}, extend classical continuum mechanics by allowing material points to possess orientational degrees of freedom in addition to translational ones. Thus, Cosserat models capture size effects and rotational phenomena in granular or otherwise heterogeneous media~\cite{vardoulakis2019cosserat}, whereas classical elasticity treats geometrically similar structures as having identical mechanical responses. The Cosserat model also inspired the notion of torsion in geometric contexts~\cite{scholz2019cartan}.
\begin{figure}
\includegraphics[width=0.6\textwidth]{./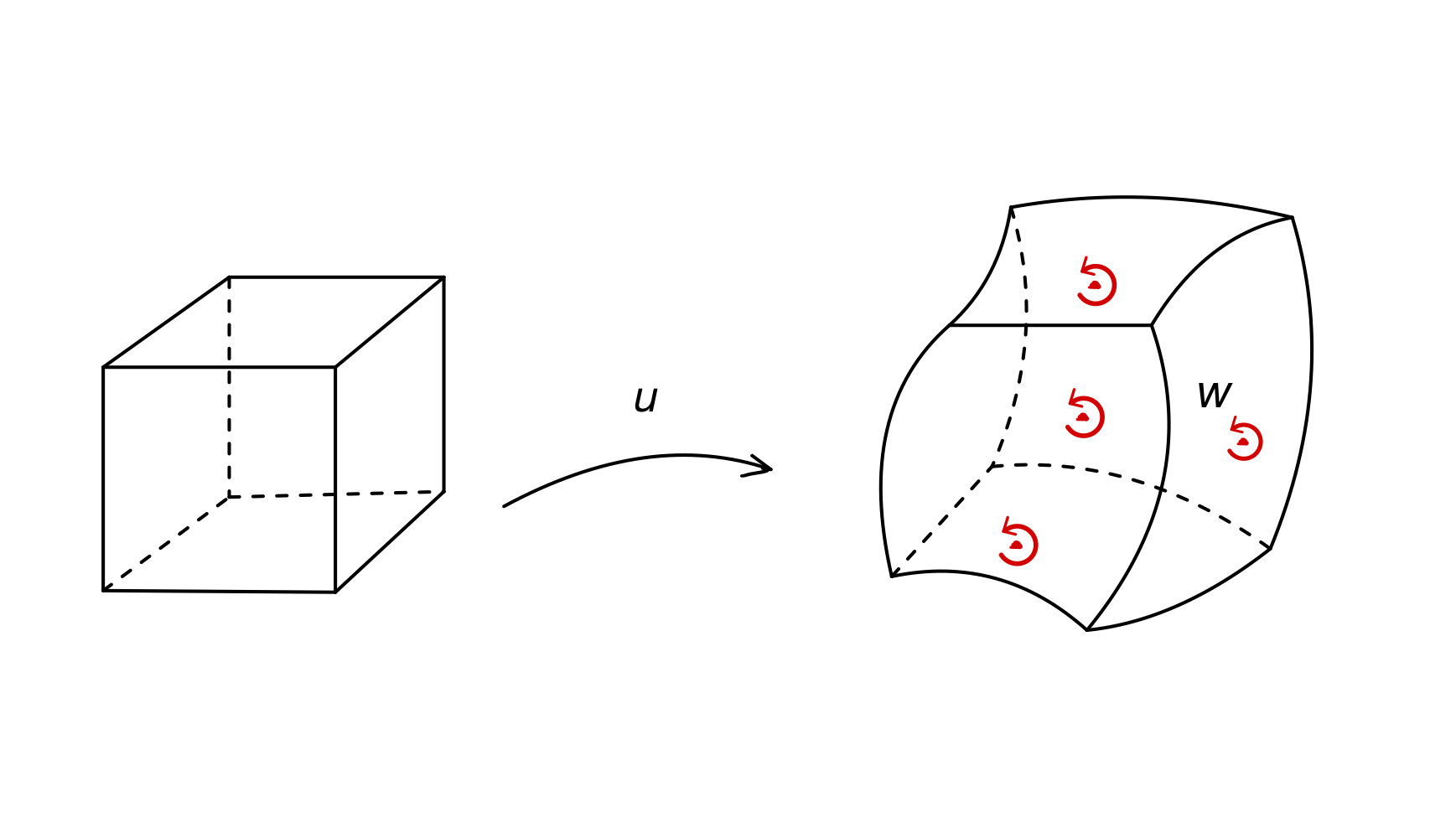}
\caption{Cosserat models introduce a pointwise rotation to continuum models. }
\end{figure}



\medskip

Next, we present a unified view of the linearised versions of these models via the twisted and BGG complexes. The linearised Cosserat elasticity model has the following energy functional, where \(\vec{u}\) is the displacement and \(\vec{\omega}\) is the pointwise rotation (treated as an axial vector):
\begin{flalign}\label{Cosserat-energy}
\begin{split}
\mathcal{E}^{\mathrm{Cosserat}}(\vec{u}, \vec{\omega})
&:= \int_{\Omega}\Bigl(\tfrac{1}{2}\|\grad \vec{u}-\mskw\,\vec{\omega}\|_{C_{1}}^{2}
+\tfrac{1}{2}\|\grad \vec{\omega}\|_{C_{2}}^{2}
-\langle \vec{f}_{\vec{u}}, \vec{u}\rangle
-\langle \vec{f}_{\vec{\omega}}, \vec{\omega}\rangle\Bigr)\,dx\\
&\quad=\int_{\Omega}\bigl(\tfrac{1}{2}\|\sym\grad \vec{u}\|_{\mathcal{C}}^{2}
+\mu_{c}\bigl\|\tfrac{1}{2}\curl \vec{u}-\vec{\omega}\bigr\|^{2}
+\tfrac{\gamma+\beta}{2}\|\sym\grad \vec{\omega}\|^{2}\\
&\qquad\qquad
+\tfrac{\gamma-\beta}{4}\|\curl \vec{\omega}\|^{2}
+\tfrac{\alpha}{2}\|\div \vec{\omega}\|^{2}\bigr)\,dx
-\int_{\Omega}\bigl(\langle \vec{f}_{\vec{u}},\vec{u}\rangle 
+\langle \vec{f}_{\vec{\omega}}, \vec{\omega}\rangle\bigr)\,dx,
\end{split}
\end{flalign}
with
\[
C_1(\varepsilon)=\mathcal{C}(\varepsilon)\,+\,\mu_c\,\skw(\varepsilon),  
\quad
\mathcal{C}(\varepsilon)=2\mu\,\sym(\varepsilon)+\lambda\,\mathrm{tr}(\varepsilon)\,I,
\]
\[
C_2(\varepsilon)=(\gamma+\beta)\sym(\varepsilon)+\alpha\,\mathrm{tr}(\varepsilon)\,I+(\gamma-\beta)\skw(\varepsilon),
\]
where \(\mu,\lambda\) are Lam\'e parameters, \(\mu_{c}\) is the Cosserat coupling constant, and \(\alpha,\beta,\gamma\) are additional micropolar moduli.

The coupling term \(\grad \vec{u}-\mskw \,\vec{\omega}\) arises from linearising the (multiplicative) action \(\exp(\mskw \vec{\omega})\in \mathrm{SO}(3)\) on the deformation.  
By separating the symmetric and skew-symmetric parts, \eqref{Cosserat-energy} can be rewritten as
\[
\mathcal{E}^{\mathrm{Cosserat}}(\vec{u}, \vec{\omega})
=\!\int_{\Omega}
\Bigl(\tfrac{1}{2}\|\sym\grad \vec{u}\|_{\mathcal{C}}^{2}
+\mu_{c}\bigl\|\tfrac{1}{2}\curl \vec{u}-\vec{\omega}\bigr\|^{2}
+\tfrac12\|\grad \vec{\omega}\|_{C_{2}}^{2}
-\langle \vec{f}_{\vec{u}}, \vec{u}\rangle 
-\langle \vec{f}_{\vec{\omega}}, \vec{\omega}\rangle
\Bigr)\,dx.
\]
When \(\mu_{c}=0\), the fields \(\vec{u}\) and \(\vec{\omega}\) decouple, so \(\vec{u}\) solves the standard elasticity problem. In the limiting case \(\mu_{c}\to \infty\), we impose \(\vec{\omega}=\tfrac12\,\curl \vec{u}\). Then the energy includes a ``mixed'' 4th--2nd order term \(\|\grad \curl \vec{u}\|_{C_{2}}^{2}\), describing a couple stress model~\cite{park2008variational}.

\medskip

This model can be recognised as the 0th Hodge--Laplacian of the twisted complex \eqref{twisted-elasticity-2D}. Precisely, for the 0-forms in \eqref{twisted-elasticity-2D}, the variable has two components: \(\vec{u}\) (displacement, top row) and \(\vec{\omega}\) (rotation, bottom row). The corresponding Hodge--Laplacian problem has the energy
\begin{align}\nonumber
\bigl\|\Bigl(\begin{smallmatrix}
\grad & \mskw\\[4pt]
0 & \grad
\end{smallmatrix}\Bigr)
\Bigl(\begin{smallmatrix}
\vec{u}\\
\vec{\omega}
\end{smallmatrix}\Bigr)\bigr\|_{C}^{2}
&=\|\sym(\grad \vec{u} + \mskw\,\vec{\omega})\|_{C_1}^2
+ \mu_{c} \|\skw(\grad \vec{u} + \mskw\,\vec{\omega})\|_{C_2}^{2}
+ \|\grad \vec{\omega}\|_{C_3}^{2}\\
&=\|\sym\grad \vec{u}\|_{C_1}^{2}
+ \mu_{c}\|\curl \vec{u} + \vec{\omega}\|_{C_2}^{2}
+ \|\grad \vec{\omega}\|_{C_3}^{2},
\label{cosserat-energy-linear}
\end{align}
which precisely corresponds to the linearised Cosserat model~\cite{neff2008linear}. For \(\mu \to 0\), \(\vec{\omega}\) decouples, reducing to classical linear elasticity; for \(\mu \to \infty\), we get \(\vec{\omega}=-\curl \vec{u}\) and hence a higher-order couple-stress type model~\cite{park2008variational}. The BGG machinery provides an algebraic viewpoint on eliminating or reintroducing the rotation variable.

Such micropolar models involve multiple scalings and constants, leading to challenges for parameter-robust numerical schemes. Notably, parameter-robust methods for Cosserat models should also handle the couple-stress regime.

\medskip

In summary, the twisted de~Rham and BGG complexes in 1D, 2D, and 3D correspond to various beam, plate, and elasticity models:
\[
\begin{array}{c|c|c}
\hline
 & \text{Twisted Complex} & \text{BGG Complex}\\
\hline
\text{1D} & \text{Timoshenko beam} & \text{Euler--Bernoulli beam}\\
\text{2D} & \text{Reissner--Mindlin plate} & \text{Kirchhoff--Love plate}\\
\text{3D} & \text{Cosserat elasticity} & \text{Classical elasticity}\\
\hline
\end{array}
\]
From this perspective, the cohomology-preserving projection \cite{arnold2021complexes} can be viewed as the process of eliminating rotational degrees of freedom from the Cosserat model to recover classical elasticity. The mappings that takes the BGG complex back to the twisted de~Rham complex reflect adding back the rotation field. An analogous interpretation holds for the Kirchhoff and Reissner--Mindlin plates. Moreover, both Reissner--Mindlin and Kirchhoff plate models can be obtained as dimension reductions of Cosserat and standard elasticity, respectively, with rigorous \(\Gamma\)-limit derivations in \cite{ciarlet1997mathematical,neff2010reissner}. These relationships are sketched in the following diagram:
\begin{equation}\label{diagram:models}
\begin{tikzcd}[row sep=large, column sep=huge]
\mathrm{Cosserat~elasticity} 
 \arrow{r}{\mathrm{BGG~(elasticity)}}
 \arrow[d, "{\mathrm{dimension~reduction}}"]
 & 
\mathrm{classical~elasticity}
\arrow[d, "{\mathrm{dimension~reduction}}"]
\\
\mathrm{(modified)~Reissner\!-\!Mindlin~plate}
\arrow{r}{\mathrm{BGG~(hessian)}}  
& 
\mathrm{Kirchhoff~plate}
\end{tikzcd}
\end{equation}

This cohomological viewpoint on the linear Cosserat model yields well-posed Hodge--Laplacian boundary-value problems and valuable tools for analysis and numerical approximation. Each BGG complex leads to a Hodge--Laplacian formulation (energy functional), and other complexes or diagrams may further generalise the Cosserat theory to other microstructures. 

\subsection{Mixed dimensional models: \v{C}ech complexes}

The \v{C}ech--de Rham double complex is another commonly used tool in algebraic topology (as well as in topological data analysis) \cite{bott2013differential}. In addition to the exterior derivatives in the de~Rham complex, the double complex introduces overlapping subdomains (where domains either partly or completely overlap, or only intersect at the interfaces) and an alternating sum of the restriction or trace operators on these subdomains. These ``jump operators'' $\delta^{\bs}$ led by the alternating sum also form a complex. Moreover, we can combine the exterior derivatives $d^{\bs}$ and the jump operators $\delta^{\bs}$ to form a new complex. The double complex and other complexes derived from it incorporate richer interface and multiphysics information. Below, we present basic definitions of \v{C}ech--de Rham double complexes and demonstrate some examples. 
 
Let \(\{U_j\}_j\) be an open cover of \(\Omega\), and denote
\[
U_{ij} := U_i \cap U_j, 
\quad
U_{ijk} := U_i \cap U_j \cap U_k,
\]
etc. Let \(\Lambda^{k}(U_{i_1 \cdots i_\ell})\) be the space of \(k\)-forms on \(U_{i_1} \cap \cdots \cap U_{i_\ell}\). We introduce
\[
\mathcal{W}^{k,s} := \bigoplus_{\,i_{0}, \dots, i_{s}} \Lambda^{k}\bigl(U_{\,i_{0}, \dots, i_{s}}\bigr),
\]
the direct sum of \(k\)-forms on all intersections involving \((s+1)\) indices. The exterior derivative \(d^k\) acts by
\[
d^k:\,\mathcal{W}^{k,s} \to \mathcal{W}^{k+1,s},
\]
since applying \(d\) does not change the domain of definition (only the degree of the form).

The Čech--de~Rham complex (sometimes called the Čech complex) adds a ``vertical differential'' \(\delta\) that goes in the other direction of a double complex. We define
\[
\delta:\,\mathcal{W}^{k,s} \to \mathcal{W}^{k,s+1}
\]
as follows. If \(\alpha \in \mathcal{W}^{k,s}\), write \(\alpha_{i_0,\dots,i_s}\) for the component of \(\alpha\) on
\(
U_{i_0}\cap \cdots \cap U_{i_s}.
\)
Then
\[
(\delta \alpha)_{\,i_0,\dots,i_{s+1}}
=
\sum_{j=0}^{s+1}
(-1)^j\,\alpha_{\,i_0,\dots,i_{j-1},\,i_{j+1},\dots,i_{s+1}}
\big|_{\,U_{\,i_0,\dots,i_{s+1}}}.
\]
The factor \((-1)^j\) indicates the usual alternating sum. 

\begin{example}
If \(\Omega\) is covered by \(U_1\) and \(U_2\) only, then \(\mathcal{W}^{k,1}\) has just one intersection \(U_{1,2}\). For \(\alpha \in \mathcal{W}^{k,0}\),
\[
(\delta \alpha)_{1,2} = (\alpha_1 - \alpha_2)\big|_{U_{1,2}},
\]
where \(\alpha_1\) and \(\alpha_2\) are the components of \(\alpha\) on \(U_1\) and \(U_2\). For three subdomains \(U_0, U_1, U_2\), one similarly obtains
\[
(\delta \alpha)_{0,1,2}
=
\alpha_{1, 2}\big|_{U_{0,1,2}}-\alpha_{0, 2}\big|_{U_{0,1,2}}+\alpha_{0, 1}\big|_{U_{0,1,2}}.
\]
\end{example}

It is straightforward to check that \(\delta\circ d = d\circ \delta\), and \(\delta^2 = 0\) just as with boundary operators. Hence we obtain a diagram:
\begin{equation}\label{double-complex}
\begin{tikzcd}
0 \arrow{r} &\mathcal{W}^{0, 0}\arrow{r}{d} \arrow{d}{\delta}& \mathcal{W}^{1, 0}\arrow{r}{d}\arrow{d}{\delta} &\mathcal{W}^{2, 0}\arrow{r}{d}\arrow{d}{\delta} &\mathcal{W}^{3, 0}\arrow{r}{} \arrow{d}{\delta}& \cdots\\
0 \arrow{r} &\mathcal{W}^{0, 1}\arrow{r}{d} \arrow{d}{\delta}& \mathcal{W}^{1, 1}\arrow{r}{d}\arrow{d} {\delta}&\mathcal{W}^{2, 1}\arrow{r}{d}\arrow{d}{\delta} &\mathcal{W}^{3, 1}\arrow{r}{}\arrow{d}{\delta} \arrow{}& \cdots\\
0 \arrow{r} &\mathcal{W}^{0, 2}\arrow{d}  \arrow{r}{d}& \mathcal{W}^{1, 2}  \arrow{r}{d} \arrow{d} &\mathcal{W}^{2, 2}\arrow{r}{d} \arrow{d} &\mathcal{W}^{3, 2}\arrow{d} \arrow{r}{}& \cdots\\
0 \arrow{r}&\cdots \arrow{r}{}& \cdots \arrow{r}{} & \cdots \arrow{r}{} &\cdots\arrow{r}{}&\cdots \\
\end{tikzcd}
\end{equation}

Define
\[
\mathcal{W}^{k}
:=
\bigoplus_{p+q=k}
\mathcal{W}^{p,q},
\quad
D^k := d + (-1)^k \delta.
\]
Since \(d\) and \(\delta\) commute and each squares to zero, one checks \(D^k \circ D^{k-1} = 0\). Thus
\[
\begin{tikzcd}
\cdots \arrow[r]{} 
& \mathcal{W}^{k-1} \arrow[r,"D^{k-1}"]
& \mathcal{W}^{k} \arrow[r,"D^k"]
& \mathcal{W}^{k+1} \arrow[r]{}
& \cdots
\end{tikzcd}
\]
is a complex, called the \emph{Čech--de~Rham complex}.

The above Čech--de~Rham complex also extends to degenerate structures in which different sets \(U_i, U_{ij},\dots\) can have varying dimensions. For instance, in a combinatorial elastic structure, beams (1D) may be coupled with shells (2D). Another example is a simplicial complex, where each \(U_i\) is a top-dimensional cell, and \(U_{ij}\) incorporates faces of codimension 1, etc. In such cases, the vertical operator \(\delta\) involves trace operators on the intersections, which makes the analytic theory of the Čech--de~Rham complex more delicate. We refer to \cite{boon2021functional} for further discussions in this direction.

Ignoring the analytic subtleties, in the rest of this section, we show a {\it formal} Hodge--Laplacian problem of the \v{C}ech complex to demonstrate connections between the \v{C}ech complex and mechanics models.  Consider the cover \(\{U_1, U_2\}\). The sub-diagram
\[
\begin{tikzcd}[column sep=2em]
\mathcal{W}^{0,0} \arrow[r,"d"] \arrow[d,"\delta"] & \mathcal{W}^{1,0}\\
\mathcal{W}^{0,1} & {}
\end{tikzcd}
\]
suggests an energy functional for \(u \in \mathcal{W}^{0,0}\):
\[
\|\nabla u\|^2_{U_1}
+ 
\|\nabla u\|^2_{U_2}
+ 
\|\,u|_{U_1} - u|_{U_2}\|^2_{U_{1}\cap U_{2}}
+ 
(f,\,u).
\]
Minimizing it formally yields an Euler--Lagrange system,
\begin{align*}
-\Delta (u|_{U_1}) + \mathbb{1}_{U_{1,2},U_1}(u|_{U_1} - u|_{U_2})
&=
(f,\,u|_{U_1}),\\
-\Delta (u|_{U_2}) - \mathbb{1}_{U_{1,2},U_2}(u|_{U_1} - u|_{U_2})
&=
(f,\,u|_{U_2}),
\end{align*}
together with boundary conditions. Here \(\mathbb{1}_{U_{1,2},U_i}\) is the characteristic function: \(\mathbb{1}_{U_{1,2},U_i}\) is a function defined on $U_{i}$, which equals to 1 on $U_{1, 2}$ and 0 elsewhere in $U_{i}$. These terms containing \(\mathbb{1}_{U_{1,2},U_i}\) incorporates contributions from the interactions between $U_{1}$ and $U_{2}$.
Such constructions are quite flexible regarding how the cover is chosen (e.g.\ one can even set \(U_1 = U_2 = \Omega\) to mimic a double-continuum model in porous media \cite[Sec.\ 3.2]{boon2022hodge}).

Mixed dimensional modelling has also been used in other applications such as brain’s waterscape \cite{rognes2022waterscales}, reinforced structures \cite{le2017mixed} and viscoelasticity and gels van der Waals heterostructures \cite{jariwala2017mixed}.


\begin{remark}
In his paper in the 1970s, Feng Kang \cite{fengkang} discussed finite elements on combinatorial manifolds with applications to combinatorial elastic structures. Some of the open questions listed in the paper include 
\begin{itemize}
\item the regularity of solutions to elliptic equations on combinatorial manifolds,
\item the generalisation of the de~Rham--Hodge theory to combinatorial manifolds.
\end{itemize}
These issues are still of significant interest, especially given modern mixed-dimensional and heterogeneous modeling applications. Particularly, developing a \v{C}ech-complex-based approach for modelling and computation of multidimensional models complies with the second question above.
\end{remark}

\section{Topological hydrodynamics and magnetohydrodynamics}\label{sec:fluids}

In this section, we review the connections between fluid mechanics and differential complex and cohomology. Specifically, we formulate the divergence-constraint of incompressible flows and advection-diffusion of fluids using differential forms; we discuss the notation of helicity and a topological mechanism underlying long-term evolution of these systems, explaining the striking difference of numerical behaviours in Example \ref{example:relaxation}.

\subsection{Divergence-constraints and advection-diffusion by differential forms}

We view fluid dynamics, particularly incompressible Navier--Stokes equations and magnetohydrodynamics, in terms of differential complexes and differential forms. The shift of point of view will provide new insights in designing structure-preserving numerical schemes.


A straightforward application of differential complexes in fluid mechanics is the incompressibility condition $\nabla\cdot\bm u=0$ in incompressible flow \eqref{NS}, which plays an important role in mathematical and numerical analysis. Written in a weak form, \eqref{NS} becomes: seek $\bm u: [0, T]\to \bm H_{0}^{1}(\Omega)$, such that for all $\bm v\in \bm H_{0}^{1}(\Omega)$, 
\begin{subequations}\label{NS-weak}
\begin{align}
(\bm u_{t}, \bm v)+((\bm u\cdot \nabla) \bm u, \bm v)+R_{e}^{-1}(\nabla  \bm u, \nabla \bm v)-( p, \nabla\cdot \bm v)&=(\bm f, \bm v), \\
(\nabla\cdot \bm u, q)&=0.
\end{align}
\end{subequations}
The divergence operator $\nabla\cdot: {\bm H}_{0}^{1}\to L^{2}/\mathbb{R}$ is onto. More precisely, the {Ladyzhenskaya-Babu\v{s}ka-Brezzi  (inf-sup) condition} holds
$$
\inf_{q\in L^{2}/\mathbb{R}}\sup_{\bm  v\in \bm H_{0}^{1}}\frac{(\nabla\cdot \bm v, q)}{\|\bm v\|_{H^{1}}\|q\|_{L^{2}}}\geq \gamma>0.
$$
The surjectivity and the norm control in the inf-sup condition are consequences of the fact that the de~Rham complex (with proper Sobolev spaces) is exact at $L^{2}/\mathbb{R}$. These results imply the well-posedness of the weak form. To see the role played by the algebraic structure, let's suppose that the divergence operator is not surjective, but has a nontrivial cokernel (which may happen on the discrete level if inappropriate finite element spaces for the velocity and pressure are chosen). That is, there exists a subspace $Q^{\perp}\subset L^{2}/\mathbb{R}$ such that $(\nabla\cdot \bm v, q)=0, \forall q\in Q^{\perp}$. Since $p$ in \eqref{NS-weak} is only involved in the term $( p, \nabla\cdot \bm v)$, adding any element of $Q^{\perp}$ to $p$ does not affect the equation. Therefore, $p$ is not unique unless $Q^{\perp}$ is trivial, i.e., the divergence operator is surjective.

The magnetohydrodynamics (MHD) system describes the macroscopic motion of conducting fluids with wide applications to plasma and astrophysics. The governing equation of incompressible MHD system is 
\begin{subeqnarray}\label{eqn:MHD}
\partial_t \bm{u} - \bm u \times (\nabla\times \bm u) - R_{e}^{-1}\Delta  \bm{u} - {s \bm{j} \times \bm{B}} + \nabla P &=& \bm{f}\quad 
\mbox{momentum equation},   \\ 
\bm{j} - \nabla \times \bm{B}  &=& \bm{0} \quad \mbox{Ampere's law}, \\
\partial_t\bm{B}  + \nabla \times \bm{E} &=& \bm{0}\quad \mbox{Faraday's law},  \\ 
R_m^{-1} \bm{j} -  \left ( \bm{E} + {{\bm{u}} \times \bm{B}} \right ) &=& \bm{0}\quad \mbox{Ohm's law},\\ 
\nabla \cdot \bm{B} &=& 0\quad \mbox{Gauss law}, \\
\nabla \cdot {\bm{u}} &=& 0,  
\end{subeqnarray}
with  initial conditions $\bm{u}(\bm{x},0) =  \bm{u}_{0}(\bm{x}), \bm{B}(\bm{x},0) =  \bm{B}_{0}(\bm{x})$, and 
 boundary conditions on $\partial \Omega$: $ \bm{u} = \bm{0}, \bm{B}\cdot \bm{n} = 0,  \bm{E}\times \bm{n} = \bm{0}$. Note that we have used the Lamb form of the convection term $\bm u \times (\nabla\times \bm u) $ and $P$ is the total pressure. Eliminating $\bm E$ and $\bm j$, we can derive the equation of $\bm B$:
 \begin{equation}\label{eqn:advection-diffusion-B}
 \partial_{t}\bm B-\nabla\times (\bm u\times \bm B)+R_{m}^{-1}\nabla\times \nabla\times \bm B=0.
 \end{equation}
The equation \eqref{eqn:advection-diffusion-B} is often called the advection-diffusion of the magnetic field. The term $-\nabla\times (\bm u\times \bm B)$ describes how the magnetic field is transported by the velocity field; the term $R_{m}^{-1}\nabla\times \nabla\times \bm B$ describes diffusion since $-\Delta \bm B=\nabla\times \nabla\times -\nabla\nabla\cdot \bm B$, and we have $\nabla\cdot\bm B=0$. The equation has a similar form as the vorticity equation
 \begin{equation}\label{eqn:advection-diffusion-w}
 \partial_{t}\bm \omega-\nabla\times (\bm u\times \bm \omega)+R_{m}^{-1}\nabla\times \nabla\times \bm \omega=0.
 \end{equation}
 The difference is that in \eqref{eqn:advection-diffusion-B}, $\bm u$ and $\bm B$ are independent (in the full MHD system, $\bm u$ is determined through the coupling with the Navier--Stokes equation); while in \eqref{eqn:advection-diffusion-w}, $\bm \omega=\nabla\times \bm u$.

In fluid mechanics and MHD, advection-diffusion equations of differential forms arise: seeking $k$-forms $\omega$ such that 
$$
\mathscr{L}_{u}\omega+\Delta_{\mathrm{HL}}\omega=f, 
$$
where $\mathscr{L}_{u}\omega$ is the Lie derivative (the flow derivatives or `fisherman derivatives' \cite{arnold1999topological}) of $\omega$ along a vector field $u$, and $\Delta_{\mathrm{HL}}$ is the Hodge--Laplacian.
 
 Given a vector field $u$ on a manifold $\Omega$, the {\it flow}\index{flow} generated by $u$ is a map $\Phi: \mathbb{R}\times \Omega\to \Omega$ such that $\partial_{t}\Phi(t, x)=u(\Phi(t, x), t)$ with $\Phi(0, x)=x$. 
 Let $\omega\in C^{\infty}\Lambda^{k}$ be a $k$-form on $\Omega$. The {\it Lie derivative}\index{Lie derivative} of $\omega$ along $u$ is defined by 
 \begin{equation}\label{def:LieDerivative}
 \mathsf{L}_{u}\omega=\left.\frac{d}{dt}\right|_{\tau=0}\Phi_{\tau}^{\ast}\omega=\lim_{\tau\rightarrow 0}\frac{\Phi_{\tau}^{\ast}\omega-\omega}{\tau}.
 \end{equation}
 For a 0-form (function) $\omega$, the above definition boils down to 
  $$
 \mathsf{L}_{u}\omega (x)=\left.\frac{d}{dt}\right|_{\tau=0}\omega=\lim_{\tau\rightarrow 0}\frac{\omega(\Phi_{\tau}x)-\omega(x)}{\tau}.
 $$
 This is just the derivative along the direction of $u$. 
 {\it Cartan's magic formula}\index{Cartan's magic formula}, or the {\it homotopy formula}, provides a connection between Lie derivatives and exterior derivatives, and gives new insights to the above seemingly involved formulas. 
 \begin{theorem}[Cartan's magic formula]
 Let $\beta$ be a smooth vector field on $\Omega$. Then we have 
 \begin{equation}\label{id:magic-formula}
\mathsf{L}_{\beta}=d\iota_{\beta}+\iota_{\beta}d,
 \end{equation}
 where $\iota_{\beta}$ is the contraction of a differential form by the vector field $ \beta$, i.e., $ \iota_{\beta}: C^{\infty}\Lambda^{k}\to C^{\infty}\Lambda^{k-1}$ is defined by
 $$
 \iota_{\beta}\omega(v_{1}, \cdots, v_{k-1}):=\omega(\beta, v_{1}, \cdots, v_{k-1}), \quad\forall \omega\in C^{\infty}\Lambda^{k}. 
 $$
 \end{theorem}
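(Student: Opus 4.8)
The plan is to establish Cartan's magic formula $\mathsf{L}_\beta = d\iota_\beta + \iota_\beta d$ on $C^\infty\Lambda^k(\Omega)$ by induction on the form degree $k$, using the fact that both sides are $\mathbb{R}$-linear first-order operators that are local, together with the compatibility of each side with the wedge product.

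First I would verify the base case $k=0$. For a $0$-form $f$, the contraction $\iota_\beta f = 0$ since there are no slots to fill, so the right-hand side reduces to $\iota_\beta d f = df(\beta) = \beta(f)$, which by the chain rule computed from \eqref{def:LieDerivative} is exactly the directional derivative $\mathsf{L}_\beta f$. Next I would handle $k=1$ on exact $1$-forms: for $\omega = dg$ with $g$ a $0$-form, one has $d\iota_\beta(dg) + \iota_\beta d(dg) = d(\beta(g)) + 0 = d(\mathsf{L}_\beta g)$, and since the Lie derivative commutes with $d$ (because pullbacks commute with $d$, hence so does the limit in \eqref{def:LieDerivative}), this equals $\mathsf{L}_\beta(dg)$. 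Because locally every $1$-form is a $C^\infty$-combination $\sum f_i\, dx^i$, and because both sides of the identity are derivations-like with respect to multiplication by functions (more precisely, both sides satisfy a graded Leibniz rule over the wedge product — which I would check directly for $d$, for $\iota_\beta$, and consequently for the composites), the general $1$-form case follows from the base case and the exact case.

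The inductive step is the heart of the argument. Assume the identity holds for all forms of degree $< k$. Locally, any $k$-form $\omega$ can be written as a finite sum of terms of the shape $\alpha \wedge \mu$ where $\alpha$ is a $1$-form and $\mu$ is a $(k-1)$-form. By linearity it suffices to treat $\omega = \alpha \wedge \mu$. Now I would expand both sides using the graded Leibniz rules
\[
d(\alpha\wedge\mu) = d\alpha\wedge\mu - \alpha\wedge d\mu,
\qquad
\iota_\beta(\alpha\wedge\mu) = (\iota_\beta\alpha)\wedge\mu - \alpha\wedge(\iota_\beta\mu),
\]
and the corresponding product rule for $\mathsf{L}_\beta$, namely $\mathsf{L}_\beta(\alpha\wedge\mu) = (\mathsf{L}_\beta\alpha)\wedge\mu + \alpha\wedge(\mathsf{L}_\beta\mu)$, which itself follows from differentiating $\Phi_\tau^\ast(\alpha\wedge\mu) = (\Phi_\tau^\ast\alpha)\wedge(\Phi_\tau^\ast\mu)$ at $\tau=0$. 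Substituting $\mathsf{L}_\beta\alpha = (d\iota_\beta + \iota_\beta d)\alpha$ and $\mathsf{L}_\beta\mu = (d\iota_\beta + \iota_\beta d)\mu$ from the inductive hypothesis (applicable since $\alpha$ has degree $1 < k$ and $\mu$ has degree $k-1 < k$), the identity reduces to a purely algebraic bookkeeping check in which the cross-terms and the sign factors from the graded Leibniz rules cancel in pairs.

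I expect the main obstacle to be precisely this sign-chasing in the inductive step: keeping track of the degrees at which $d$ and $\iota_\beta$ act (degrees $+1$ and $-1$ respectively) and the resulting $(-1)^{\deg}$ factors, so that the eight-or-so terms produced by expanding $(d\iota_\beta + \iota_\beta d)(\alpha\wedge\mu)$ regroup cleanly into $(\mathsf{L}_\beta\alpha)\wedge\mu + \alpha\wedge(\mathsf{L}_\beta\mu)$. The conceptual content is light — everything is forced once one knows the Leibniz rules and the base case — but the calculation must be organized carefully. An alternative, which I would mention but not pursue in detail, is to prove the formula in coordinates by writing $\omega = \sum_I f_I\, dx^I$ and computing $\mathsf{L}_\beta\omega$ directly from \eqref{def:LieDerivative} using $\frac{d}{d\tau}\big|_0 \Phi_\tau^\ast(dx^i) = d(\beta^i)$; this avoids induction but trades the sign bookkeeping for index bookkeeping of comparable length.
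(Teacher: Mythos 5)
The paper does not actually prove this theorem: it states Cartan's magic formula as a classical fact and immediately passes to its $3$D vector proxies, so there is no argument of the paper's to compare yours against. Your proposal is the standard textbook proof and it is sound. The base case $k=0$ is right ($\iota_\beta f=0$, so the right-hand side is $df(\beta)=\beta(f)$), the reduction of general $1$-forms to exact ones via the common Leibniz rule over multiplication by functions is valid (both $\mathsf{L}_\beta$ and $d\iota_\beta+\iota_\beta d$ send $f\omega$ to $(\beta f)\omega+f(\cdot)\omega$, as a short computation confirms), and the inductive step goes through exactly as you predict: expanding $(d\iota_\beta+\iota_\beta d)(\alpha\wedge\mu)$ with the graded Leibniz rules for $d$ (degree $+1$, sign $(-1)^{\deg\alpha}$) and $\iota_\beta$ (degree $-1$, same sign) produces eight terms of which the two cross-terms $(\iota_\beta\alpha)\,d\mu$ and $d\alpha\wedge\iota_\beta\mu$ each appear once with each sign and cancel, leaving $(d\iota_\beta+\iota_\beta d)\alpha\wedge\mu+\alpha\wedge(d\iota_\beta+\iota_\beta d)\mu$, which the inductive hypothesis identifies with $\mathsf{L}_\beta\alpha\wedge\mu+\alpha\wedge\mathsf{L}_\beta\mu=\mathsf{L}_\beta(\alpha\wedge\mu)$. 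The only points worth making explicit when you write it up are (i) that both sides are local operators, which justifies working in a coordinate chart, and (ii) that $\mathsf{L}_\beta$ commutes with $d$ because $\Phi_\tau^\ast$ does and the limit in \eqref{def:LieDerivative} passes through $d$ — both of which you already flag. The coordinate alternative you mention at the end would also work and is closer in spirit to how the paper actually uses the formula (via the $3$D proxies), but the inductive argument is cleaner and dimension-independent.
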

 Cartan's magic formula has the following proxies in 3D:
 \begin{tcolorbox}[colback=red!5!white,colframe=red!75!black]
$k=0$: \quad $\mathsf{L}_{\bm \beta}u=\bm\beta\cdot \grad u$,\\
$k=1$: \quad $\mathsf{L}_{\bm\beta}v=-\bm\beta\times \curl \bm v+\grad(\bm v\cdot \bm\beta)$,\\
$k=2$: \quad $\mathsf{L}_{\bm\beta}w= (\div  \bm w)\bm\beta+\curl(\bm w\times \bm\beta)$, \\
$k=3$: \quad $\mathsf{L}_{\bm\beta}g= \div (g\bm \beta)$. 
\end{tcolorbox}

The case $k=0$ corresponds to the standard scalar advection-diffusion equation; $k=2$ corresponds to the advection-diffusion of the magnetic field in MHD \eqref{eqn:advection-diffusion-B} or the vorticity \eqref{eqn:advection-diffusion-w}; and $k=1$ corresponds to the advection-diffusion of the magnetic potential. 
 
 Catan's magic formula indicates that there are at least two ways to compute the Lie derivative: one is by the definition \eqref{def:LieDerivative} and another is by the exterior derivatives \eqref{id:magic-formula}. In fact, they give two different approaches for numerical discretisation of the advection equations of differential forms: one is to use a finite difference to replace the limit in the definition \eqref{def:LieDerivative} (this involves solving the flow maps using an ODE solver), and another is to discretise differential operators appearing in the magic formula \eqref{id:magic-formula}. The former corresponds to Lagrangian (or semi-Lagrangian) methods \cite{heumann2012fully,pagliantini2016computational}, and the latter corresponds to Eulerian methods.

\subsection{Helicity and topological hydrodynamics}
As a second example of the appearance of de~Rham complexes in fluid dynamics, we consider the notion of helicity describing knots of fields and the topological mechanism encoded in helicity conservation.

Topological structures in fluid flows and other physical models, particularly vortex knots and linked vortex tubes, have drawn attention since the 19th century, when Lord Kelvin proposed that atoms might be modelled as knotted vortex rings \cite{kelvin1867vortex}. Later, Thomson discovered the electron (for which he received a Nobel Prize) and proposed his plum pudding model. The development vortex atomic theory was thus paused. However, this development inspired knot theory as a mathematical theory, and the idea of knotting later thrived in the study of fluid dynamics: knots reflect deep topological properties and conservation laws within fluid dynamics. 

A key quantity capturing the ``knottedness” or linking of flow fields is {\it helicity}, first formalised in the mid-20th century.  In the context of MHD,  Lodewijk Woltjer \cite{woltjer1958theorem} defined the magnetic helicity, known as Woltjer's invariant. Keith Moffatt \cite{moffatt1969degree} and others further established the concept of helicity as a topological measure in fluid flows, emphasising its role in describing knottedness or linkage in field lines. Figure \ref{fig:knots} shows two linking tubes of a divergence-free field. The helicity is proportional to the product of the fluxes of the tubes and a topological constant (the linking number). In general, helicity is the averaging asymptotic linking number, which can be derived as a continuum version of linked tubes \cite{arnold1999topological}. 
Helicity remains invariant under ideal (nondissipative) flow motions. This makes helicity a powerful tool for understanding the evolution of complex structures in both classical fluid dynamics and plasma physics, where the interplay between geometry, topology, and dynamics often governs the stability and behaviour of flows.

\begin{figure}
  \includegraphics[width=0.35\textwidth]{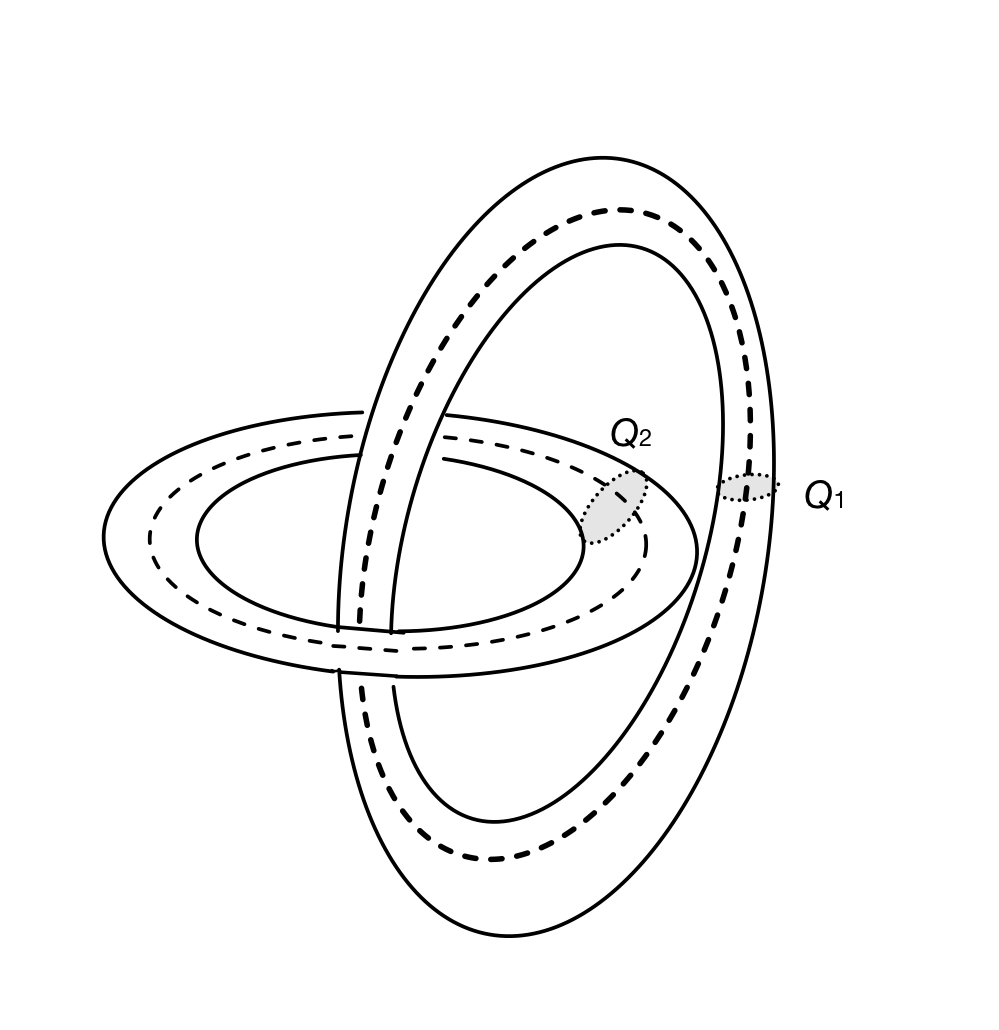} 
    \caption{Two tubes of a field $\bm \xi$ linked with each other. Let $Q_{1}$ and $Q_{2}$ be the fluxes of the two tubes, respectively. Then the helicity is 
    $\mathcal{H}_{\bm{\xi}}=2\ell Q_{1}\cdot Q_{2}$,  where $\ell$ is the Gauss linking number only depending on the topology of the knot ($\ell=1$ in the figure).  }
  \label{fig:knots}
  \end{figure}

Helicity can be defined for any divergence-free field in $\mathbb{R}^{3}$ with a potential, i.e., for $\bm \xi$ satisfying $\div \bm \xi=0$, the corresponding helicity $\mathcal{H}_{\bm \xi}$ is defined by
$$
\mathcal{H}_{\bm \xi}:=\int_{\Omega} \bm \xi\cdot \curl^{-1} \bm \xi\, dx,
$$ 
where $\Omega$ is a subdomain of $\mathbb{R}^{3}$ and $ \curl^{-1} \bm \xi$ is a potential of $\mathcal{H}_{\bm \xi}$ satisfying $\curl (\curl^{-1} \bm \xi)=\bm \xi$.  First, we note that $ \curl^{-1} \bm \xi$ is not unique (up to gradient fields and 1-form cohomology representatives), and the above definition does not depend on the specific choice of $ \curl^{-1} \bm \xi$ as long as $\bm \xi$ is tangent to the boundary of $\Omega$. To see this, let $\bm\eta=\curl^{-1}\bm\xi$ and $\bm \eta':=\bm \xi+\grad \varphi+\bm h$, where $\bm h$ is a harmonic form. Then 
$$
\int_{\Omega} \bm \eta'\cdot \bm \xi\,dx=\int_{\Omega}(\bm \eta+\grad \varphi+h)\cdot \bm \xi\, dx=\int_{\Omega} \bm \eta\cdot \bm \xi\,dx,
$$
where we have used the fact that $\int_{\Omega}\grad \varphi\cdot  \bm \xi\, dx=-\int_{\Omega}\varphi\cdot \div \bm \xi\, dx=0$ (integration by parts) and $\int_{\Omega}\bm h\cdot \bm\xi\,dx=0$ by the definition of harmonic forms. Second, the definition of helicity involves a potential, which only exists when harmonic 2-forms are trivial, though various generalisations exist \cite{arnold1999topological}. 

For a divergence-free field $\bm \xi$, its energy is defined to be $\mathscr{E}(\bm \xi):=\int_{\Omega}|\bm \xi|^{2}\, dx$. We have the following:
\begin{theorem}[Arnold inequality \cite{arnold1974asymptotic}]
The helicity provides a lower bound for the energy:
\begin{equation}\label{arnold-inequality}
|\int \curl^{-1}\bm \xi\cdot \bm \xi\, dx|\leq C\int  |\bm \xi|^2\, dx.
\end{equation}
\end{theorem}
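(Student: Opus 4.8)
The plan is to show that the linear map $\curl^{-1}$ (restricted to the space of divergence-free fields tangent to $\partial\Omega$, modulo the relevant cohomology) is a bounded operator on $L^2$, and then conclude by Cauchy--Schwarz. First I would set up the correct functional framework: let $X := \{\bm\xi \in L^2(\Omega)\otimes\R^3 : \div\bm\xi = 0,\ \bm\xi\cdot\bm n|_{\partial\Omega}=0\}$, and recall from the Hodge decomposition (Section~\ref{sec:hodge}) that, when the relevant harmonic space is trivial, every $\bm\xi \in X$ can be written as $\bm\xi = \curl\bm\eta$ with $\bm\eta$ chosen in the orthogonal complement of $\ker(\curl)$, so that $\bm\eta =: \curl^{-1}\bm\xi$ is uniquely determined in that complement. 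The independence of $\mathcal H_{\bm\xi}$ on the choice of potential was already verified in the excerpt, so it suffices to work with this canonical choice.

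The key step is the a priori estimate $\|\curl^{-1}\bm\xi\|_{L^2} \le C\|\bm\xi\|_{L^2}$ for $\bm\xi \in X$. This is exactly a closed-range / Poincar\'e-type statement: the operator $\curl$ has closed range on the de~Rham complex \eqref{deRham:3D-Hd} (this is the content of the Hodge theory for Hilbert complexes, and follows from the finite-dimensionality of the cohomology established via Theorem~\ref{thm:costabel-mcintosh} and the argument in Section~\ref{sec:hodge}), so by the Banach isomorphism theorem $\curl : \ker(\curl)^\perp \cap H(\curl,\Omega) \to \ran(\curl)$ has bounded inverse; in particular $\|\bm\eta\|_{H(\curl)} \le C\|\curl\bm\eta\|_{L^2}$, hence $\|\bm\eta\|_{L^2}\le C\|\bm\xi\|_{L^2}$. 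Equivalently, one may invoke the analogue of the Poincar\'e inequality \eqref{poincare-inequality} for the $\curl$ slot of the complex, which is precisely the statement that $\ran(\grad)=\ker(\curl)$ with norm control. Combining this with Cauchy--Schwarz gives
\[
\Bigl|\int_\Omega \curl^{-1}\bm\xi\cdot\bm\xi\,dx\Bigr| \le \|\curl^{-1}\bm\xi\|_{L^2}\,\|\bm\xi\|_{L^2} \le C\|\bm\xi\|_{L^2}^2 = C\,\mathscr E(\bm\xi),
\]
which is \eqref{arnold-inequality}.

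The main obstacle is purely the functional-analytic bookkeeping around boundary conditions and cohomology: one must make sure that the potential $\curl^{-1}\bm\xi$ is chosen in the space on which $\curl$ is injective with closed range (the orthogonal complement of $\grad H^1 \oplus \mathscr H$ inside $H(\curl,\Omega)$), and that the boundary condition $\bm\xi\cdot\bm n = 0$ is what makes the integration-by-parts steps (already used in the excerpt to prove well-definedness) legitimate. On a general bounded domain with nontrivial topology one should either assume the harmonic $2$-forms vanish (as the excerpt does, in order for the potential to exist) or restrict $\bm\xi$ to the orthogonal complement of the harmonic fields; modulo this, the estimate is a direct consequence of the closed-range property of the de~Rham complex, so no genuinely new analytic input beyond Section~\ref{sec:analysis} is needed. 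Optionally, one can track the optimal constant $C$, which by this argument is the reciprocal of the smallest nonzero eigenvalue of $\curl$ on $X$ (equivalently, a Poincar\'e-type constant for $\Omega$); identifying it sharply is a separate spectral question that I would not pursue here.
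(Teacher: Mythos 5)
Your proof is correct and follows essentially the same route as the paper: Cauchy--Schwarz applied to $\int \curl^{-1}\bm\xi\cdot\bm\xi\,dx$, combined with the Poincar\'e inequality $\|\curl^{-1}\bm\xi\|_{L^2}\le C\|\curl(\curl^{-1}\bm\xi)\|_{L^2}$, which the paper likewise attributes to the closed-range/finite-dimensional-cohomology argument of Section~\ref{sec:analysis}. Your additional bookkeeping about the canonical choice of potential and the boundary conditions is a sound elaboration of the same idea rather than a different approach.
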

\begin{proof}
The desired result is a consequence of the Cauchy-Schwarz inequality 
$$
|\int \curl^{-1}\bm \xi\cdot \bm \xi\, dx|\leq \|\curl^{-1}\bm \xi\|_{L^{2}}\|\bm \xi\|_{L^{2}},
$$ and the Poincar\'e inequality 
$$
\|\curl^{-1}\bm \xi\|_{L^{2}}\leq C\|\nabla\times (\curl^{-1}\bm \xi)\|_{L^{2}},
$$ 
which is again a corollary of finite dimensional cohomology; see Section \ref{sec:analysis}.
\end{proof}
The Arnold inequality is based on a simple proof. However, its consequence is significant. We view $\bm \xi$ as a 2-form, and the potential $\curl^{-1}\bm \xi$ is a 1-form. Then the inequality \eqref{arnold-inequality} is the coordinate form of the following inequality of forms:
$$
|\int \curl^{-1}\bm \xi\wedge \bm \xi\, dx|\leq C\int  \bm \xi\wedge \star \bm\xi\, dx.
$$
The left hand side does not involve metric and is thus topological. The right hand side involves the Hodge star and therefore a metric, which is geometric. The inequality further clarifies the fact that knots provide a topological barrier for the energy to decay.

In the context of fluids, the (fluid) helicity is defined to be $\int_{\Omega}\bm u\cdot \bm \omega\, dx$. 
 In MHD, the magnetic field $\bm B$ is divergence-free, and thus a magnetic helicity is defined:
 $$
 \mathcal{H}_{m}:=\int_{\Omega}\bm A \cdot \bm B\,dx,
 $$
 where the magnetic potential $\bm A$ satisfies $\nabla\times \bm A=\bm B$. Another quantity is the {\it cross helicity} 
 $$
 \mathcal{H}_{c}:=\int_{\Omega}\bm u \cdot \bm B\,dx.
 $$
 \begin{theorem}
 In the absence of magnetic diffusion (the magnetic Reynolds number $R_{m}=\infty$), the magnetic helicity is conserved 
 $$
 \frac{d}{dt} \mathcal{H}_{m}=0.
 $$
  If further for ideal fluids ($R_{e}=R_{m}=\infty$) and $\bm f$ is a gradient field, then the cross helicity is conserved 
  $$
  \frac{d}{dt}\mathcal{H}_{c}=0.
  $$
 \end{theorem}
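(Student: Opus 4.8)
The plan is to differentiate each conserved quantity in time, substitute the evolution equations contained in \eqref{eqn:MHD}, and then kill every resulting term by one of three elementary mechanisms: a pointwise-vanishing scalar triple product of the shape $(\bm a\times\bm b)\cdot\bm b=0$; an integration-by-parts term annihilated by $\nabla\cdot\bm u=\nabla\cdot\bm B=0$; or a boundary term annihilated by the prescribed conditions $\bm u|_{\partial\Omega}=\bm 0$, $\bm B\cdot\bm n|_{\partial\Omega}=0$, $\bm E\times\bm n|_{\partial\Omega}=\bm 0$. I work formally, assuming the solution is smooth enough to differentiate under the integral and integrate by parts; the only tools needed are $\int_\Omega \bm a\cdot(\nabla\times\bm b)\,dx=\int_\Omega(\nabla\times\bm a)\cdot\bm b\,dx+\int_{\partial\Omega}(\bm a\times\bm b)\cdot\bm n\,dS$ and the cyclic identity $(\bm a\times\bm b)\cdot\bm c=(\bm c\times\bm a)\cdot\bm b$.

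\emph{Magnetic helicity.} With $R_m=\infty$, Ohm's law gives $\bm E=-\bm u\times\bm B$, so Faraday's law becomes $\partial_t\bm B=\nabla\times(\bm u\times\bm B)$. I would fix the vector potential $\bm A$ in the temporal (Weyl) gauge $\partial_t\bm A=-\bm E=\bm u\times\bm B$; then $\partial_t(\nabla\times\bm A)=-\nabla\times\bm E=\partial_t\bm B$, so $\nabla\times\bm A(t)=\bm B(t)$ persists once it holds at $t=0$, and $\mathcal{H}_{m}=\int_\Omega\bm A\cdot\bm B\,dx$ is well defined along the flow. Differentiating and integrating the term $\bm A\cdot\partial_t\bm B=\bm A\cdot(\nabla\times\partial_t\bm A)$ by parts yields
\[
\frac{d}{dt}\mathcal{H}_{m}=2\int_\Omega\partial_t\bm A\cdot\bm B\,dx+\int_{\partial\Omega}(\bm A\times\partial_t\bm A)\cdot\bm n\,dS.
\]
The volume term equals $2\int_\Omega(\bm u\times\bm B)\cdot\bm B\,dx=0$ pointwise, and the boundary term equals $-\int_{\partial\Omega}(\bm A\times\bm E)\cdot\bm n\,dS=-\int_{\partial\Omega}\bm A\cdot(\bm E\times\bm n)\,dS=0$ because $\bm E\times\bm n=\bm 0$ on $\partial\Omega$. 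Hence $\tfrac{d}{dt}\mathcal{H}_{m}=0$.

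\emph{Cross helicity.} Assume in addition $R_e=\infty$ and $\bm f=\nabla g$. The momentum equation then reads $\partial_t\bm u=\bm u\times(\nabla\times\bm u)+s\,\bm j\times\bm B-\nabla(P-g)$, while still $\partial_t\bm B=\nabla\times(\bm u\times\bm B)$. So
\[
\frac{d}{dt}\mathcal{H}_{c}=\int_\Omega\partial_t\bm u\cdot\bm B\,dx+\int_\Omega\bm u\cdot\partial_t\bm B\,dx.
\]
In the first integral $(\bm j\times\bm B)\cdot\bm B=0$ pointwise and $\int_\Omega\nabla(P-g)\cdot\bm B\,dx=-\int_\Omega(P-g)\nabla\cdot\bm B\,dx+\int_{\partial\Omega}(P-g)\,\bm B\cdot\bm n\,dS=0$, leaving $\int_\Omega[\bm u\times(\nabla\times\bm u)]\cdot\bm B\,dx$. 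In the second integral, integration by parts together with $\bm u|_{\partial\Omega}=\bm 0$ gives $\int_\Omega(\nabla\times\bm u)\cdot(\bm u\times\bm B)\,dx$. The cyclic rule gives $[\bm u\times(\nabla\times\bm u)]\cdot\bm B=(\bm B\times\bm u)\cdot(\nabla\times\bm u)=-(\nabla\times\bm u)\cdot(\bm u\times\bm B)$, so the two integrals cancel and $\tfrac{d}{dt}\mathcal{H}_{c}=0$.

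The main obstacle is not any single computation but the bookkeeping around $\bm A$ in $\mathcal{H}_{m}$: one should either restrict to domains whose second de~Rham cohomology is trivial (so a potential exists and, as noted after \eqref{arnold-inequality}, $\int_\Omega\bm A\cdot\bm B\,dx$ is independent of the choice of $\bm A$ when $\bm B\cdot\bm n=0$), or work with a suitable generalised helicity, and in either case verify that the temporal-gauge evolution $\partial_t\bm A=-\bm E$ propagates the constraint $\nabla\times\bm A=\bm B$ along with the regularity needed for the integrations by parts. One should also confirm that the boundary conditions of \eqref{eqn:MHD} are preserved by the evolution, but since they are imposed as part of the system this is a well-posedness issue rather than part of the conservation argument; everything else is routine vector calculus.
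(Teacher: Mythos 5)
Your argument is correct, and it is the standard computation; the paper states this theorem without proof, so there is nothing to compare against — your write-up fills that gap with the expected mechanisms (pointwise vanishing of $(\bm a\times\bm b)\cdot\bm b$, the solenoidal constraints, and the boundary conditions $\bm u|_{\partial\Omega}=\bm 0$, $\bm B\cdot\bm n=0$, $\bm E\times\bm n=\bm 0$), and your handling of the vector potential via the temporal gauge $\partial_t\bm A=-\bm E$ together with the gauge-independence remark is exactly the right way to make $\mathcal H_m$ well defined. One small slip: the integration-by-parts identity should read $\int_\Omega \bm a\cdot(\nabla\times\bm b)\,dx=\int_\Omega(\nabla\times\bm a)\cdot\bm b\,dx-\int_{\partial\Omega}(\bm a\times\bm b)\cdot\bm n\,dS$ (since $\nabla\cdot(\bm a\times\bm b)=(\nabla\times\bm a)\cdot\bm b-\bm a\cdot(\nabla\times\bm b)$), so your boundary terms carry the opposite sign — harmless here because each boundary term is shown to vanish, but worth fixing.
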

The MHD system \eqref{eqn:MHD} satisfies energy dissipation or conservation:
 \begin{eqnarray*}
{1 \over 2}\frac{d}{dt} \|\bm{u}\|_0^2 + \frac{s}{2}  \frac{d}{dt}\|\bm{B}\|_0^2 + R_{e}^{-1} \|\nabla \bm{u}\|_0^2  + s R_m^{-1} \| \bm{j}\|_0^2= (\bm{f}, \bm{u}),
\end{eqnarray*}
and hence
\begin{eqnarray*}
\max_{0\leq t\leq T} \left(
    \|\bm{u}\|_0^2 + s \|\bm{B}\|_0^2 \right) 
    + R_{e}^{-1} \int_{0}^{T} \|\nabla
    \bm{u}\|_0^2\,\mathrm{d}\tau + 2 s R_m^{-1} \int_{0}^{T}
    \|\bm{j}\|_0^2 \,\mathrm{d}\tau \\
    \leq  ~ \|\bm{u}_{0}\|_0^{2} + s \|\bm{B}_{0}\|_0^{2} + 
    {R_{e}} \int_0^T\|\bm{f}\|_{-1}^{2}\,\mathrm{d}\tau. 
\end{eqnarray*}
\medskip
With $\bm f=0$ and $R_{m}^{-1}=0$, the total energy is non-increasing. However, some key information is {not clear}: 
\begin{itemize}
\item whether the total energy decays to zero?
\item how does the total energy split into the the fluid part ($\|\bm u\|^{2}$) and the magnetic part ($s\|\bm B\|^{2}$)? 
\end{itemize}
Helicity provides further information for these questions and thus characterise finer structures of MHD. Below we show the significance of helicity by a problem from solar physics. 

In 1972, Eugene N.~Parker made a conjecture about the evolution of initial states in magnetically-ideal plasmas~\cite{parkerTopologicalDissipationSmallScale1972}.
Although several versions and different statements have been discussed in the literature \cite{pontinParkerProblemExistence2020}, the Parker conjecture essentially claims that \emph{for almost all possible flows, the magnetic field develops tangential discontinuity (current sheets) during ideal magnetic relaxation to a force-free equilibrium.} 
This conjecture has many important consequences in solar physics, including explaining the mechanism of coronal heating.
For a comprehensive literature review, see \cite{pontinParkerProblemExistence2020}.

Below we focus on the case where in conducting fluids, fluid diffusion exists to dissipate the energy (the fluid Reynolds number $R_{e}<\infty$), while the magnetic part is ideal (the magnetic Reynolds number $R_{m}=\infty$).
The canonical energy estimate indicates that the total energy of the fluid and the magnetic field is non-increasing.
Furthermore, the magnetic helicity $\mathcal{H}_{m}$ is conserved. 
A significant consequence of the Arnold inequality is that, {\it despite dissipation in the energy, initial data with nonzero $\mathcal{H}$ cannot relax to zero}. This reflects a crucial topological mechanism in magnetic relaxation (free evolution of the MHD system).

\begin{figure}
\begin{center}
\begin{tikzpicture}
  \draw[blue,  thick] (0.5, 2) .. controls (0.7, 1.6) and (0.4, 0.8) .. (1.4, 1.2) .. controls (1.6, 0.8) and (0.8, 0.4) .. (0.7, 0);
  \draw[blue,  thick] (0.7, 2) .. controls (0.9, 1.4) and (1.4, 1.8) .. (1.1, 0.6) .. controls (0.6, 0.2) and (1.1, 0.4) .. (0.9, 0);
  \draw[blue,  thick] (0.9, 2) .. controls (1.1, 1.6) and (0.6, 1.0) .. (1.3, 0.8) .. controls (1.5, 0.4) and (0.9, 0.2) .. (1.1, 0);
  \draw[blue,  thick] (1.1, 2) .. controls (0.9, 1.4) and (1.7, 1.6) .. (1.2, 1.0) .. controls (0.8, 0.6) and (1.3, 0.2) .. (1.3, 0);
  \draw[blue,  thick] (1.3, 2) .. controls (1.5, 1.8) and (1.0, 0.8) .. (1.6, 1.2) .. controls (1.2, 0.6) and (1.1, 0.2) .. (1.5, 0);
  \draw[blue,  thick] (1.5, 2) .. controls (1.7, 1.6) and (1.1, 1.4) .. (0.9, 0.8) .. controls (1.2, 0.4) and (1.6, 0.2) .. (1.7, 0);

  \draw[blue,  thick] (4.5, 2) -- (4.5, 0);
  \draw[blue,  thick] (4.7, 2) -- (4.7, 0);
  \draw[blue,  thick] (4.9, 2) -- (4.9, 0);
  \draw[blue,  thick] (5.1, 2) -- (5.1, 0);
  \draw[blue,  thick] (5.3, 2) -- (5.3, 0);
  \draw[blue,  thick] (5.5, 2) -- (5.5, 0);

  \node at (3.25, 1) {$\nRightarrow$};
\end{tikzpicture}
\end{center}
  \caption{Knots of field lines provide a topological barrier for the decay of energy. Helicity conservation and the Arnold inequality imply that topologically nontrivial initial data cannot relax to a trivial state.}
  \label{fig:helicity-initial}
\end{figure}
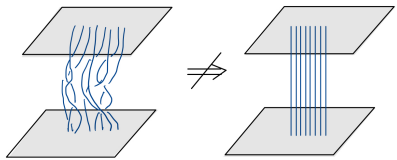

 
 This explains the reason for helicity-preserving algorithms and plain algorithms behaving dramatically differently in  Example \ref{example:relaxation}: algorithms that do not preserve helicity lost the topological mechanism (knots as barriers presenting the energy from decaying) on the discrete level, and thus lead to physically wrong solution in long-term evolution.
 
In fact, topology-preservation is a well known challenge in computational plasma physics. Quoting \cite{pontinParkerProblemExistence2020}:

\begin{quote}
{Direct computational assessment of Parker’s hypothesis brings a number of challenges. Foremost among these is the requirement to {\it precisely maintain the magnetic topology during the simulated evolution}, i.e., precisely maintain the magnetic field line mapping between the two line-tied boundaries. ...
In the following sections two methods are described which seek to mitigate against these difficulties. However, in all cases the {\it representation of current singularities remains problematic}....}
\end{quote}

As the $\curl$ structure is the key for the definition and properties of helicity, it is not hard to imagine that the de~Rham complex plays a significant role in studying questions related to helicity. In fact, the algorithm that preserves helicity used in Example \ref{example:relaxation} is based on finite element de~Rham complexes and finite element exterior calculus. 
Methods based on de~Rham complexes naturally address both issues mentioned in the quote above:
\begin{itemize}
 \item They preserve helicity, providing a discrete Arnold inequality (thus enforcing topological barriers). 
 \item The Raviart–Thomas (RT) finite element space allows tangential discontinuities, and $\nabla \times \mathrm{RT}$ accommodates current sheets. 
 \end{itemize}
 
Topology-preserving discretisation based on the de~Rham complex sheds light on tackling computational challenges in topological hydrodynamics \cite{arnold1999topological}.

 



\section{General relativity and the Einstein equations}\label{sec:einstein}

In general relativity, spacetime geometry is governed by the Einstein equations, which are complicated and
challenging to solve: in John Wheeler’s famous summary, “spacetime tells matter how to move; matter tells
spacetime how to curve”. Since controlled experiments are impossible and analytical solutions are only
available in exceptional cases, it is essential to simulate the Einstein equations on computers, a subject
referred to as numerical relativity.
Early attempts of numerical simulations of black holes (see, e.g., \cite{smarr1977space,wilson1979numerical}) suffer from severe instabilities. Numerical relativity witnessed a breakthrough in 2005 when accurate long-term evolutions of black holes were achieved \cite{pretorius2005evolution}. Researchers realised that mathematical properties, particularly the hyperbolicity and the constraint equations, of the Einstein equations played a vital role in the success of these numerical methods. Only certain reformulations with carefully chosen coordinates and gauge conditions are strongly hyperbolic, which lead to stable numerical discretisation \cite{reula1998hyperbolic,baumgarte1998numerical,shibata1995evolution}.   Nowadays numerical computation for the Einstein equations is a fast-growing area and plays a vital role in modern astrophysics by providing templates in the detection of gravitational waves.


General relativity is governed by the Einstein equations, which are nonlinear geometric PDEs and read in compact notation:
\begin{equation}\label{eqn:einstein}
G_{\mu\nu}+\Lambda g_{\mu\nu}=\kappa T_{\mu\nu}.
\end{equation}
Here $g_{\mu\nu}$ is a spacetime metric, which is the unknown; $G_{\mu\nu}$ is the Einstein tensor obtained from $g_{\mu\nu}$; $\Lambda$ is the cosmological constant; $\kappa$ is the Einstein gravitational constant; and $T_{\mu\nu}$ is the stress–energy tensor. 

High complexity is hidden in the compact notation \eqref{eqn:einstein}. Recall that the Christoffel symbols $\Gamma_{ij}^{k}$ can be obtained from the Riemann tensor:
$$
\Gamma_{ij}^{k}=g^{k\ell}(\frac{\partial g_{\ell i}}{\partial x^{j}}+\frac{\partial g_{\ell j}}{\partial x^{i}}-\frac{\partial g_{ij}}{\partial x^{\ell}}), 
$$
and the Riemann tensor can be obtained from the Christoffel symbols:
$$
R^\ell_{\ i j k} =
\frac{\partial \Gamma^\ell_{i k}}{\partial x^j}
-
\frac{\partial \Gamma^\ell_{i j}}{\partial x^k}
+
\Gamma^\ell_{j m}\,\Gamma^m_{i k}
-
\Gamma^\ell_{k m}\,\Gamma^m_{i j}.
$$
The Ricci tensor is the trace of the Riemann tensor: $R_{ik}=R^\ell_{\ i \ell k}$; while the Einstein tensor is further obtained by modifying the trace of Ricci:
$$
G_{ik}=R_{ik}-\frac{1}{2}Rg_{ik},
$$
with the scalar curvature $R:=g^{ik}R_{ik}$. With spacetime coordinates, the metric is a 4-by-4 symmetric matrix field with the Lorenzian signature. The above calculation shows that the Einstein tensor $G_{k\ell}$ is a highly complicated nonlinear function of the metric $g_{ij}$. 

Another complication of the Einstein equations comes from the fact that the Einstein equation is a geometric PDE. One may choose different coordinates to describe the same geometry. This means that once one writes down the Einstein equation in coordinates, solutions to the PDEs will not be unique. Any $\tilde{g}_{ij}$ obtained from $g_{ij}$ by a coordinate transform will also solve the equations. {\it Gauge conditions}, or coordinates, can be fixed to select a unique solution. The properties of the PDEs depend on the chosen coordinates and gauge conditions. This will be a particularly important issue in numerical computation, as equivalent formulations on the continuous level can lead to rather different numerical performance.  
In the study of geometric dynamics of spacetime and for the purpose of computation, one often carries out a 3+1 decomposition of the spacetime into space and time components. Choosing such decompositions is often equivalent to choosing coordinates. 

The well known formulation with a spacetime decomposition is the Arnowitt--Deser--Misner (ADM) formalism, which is a Hamiltonian formulation of general relativity and plays an important role in quantum gravity.

Even though the Einstein equations have a highly complicated form, one may still observe the equations through linearisation. The linearised Arnowitt--Deser--Misner (ADM) formulation (around the Minkowski metric) is the following:
\begin{equation}\label{eqn:ADM}
  \begin{aligned}
   \ten \gamma_{tt} + \mathcal{S}\inc\ten\gamma-2\hess\alpha-2\deff\vec \beta_t &= 0 \\
    \div \mathcal{S}(\ten\gamma_t - 2\deff\vec\beta) &= 0\\
    \div\div \mathcal{S}\ten \gamma &= 0,
  \end{aligned}
\end{equation}
where  $\ten \gamma$ is the perturbation of the spatial metric, $\alpha $ is the lapse indicating the separation of spacial hypersurfaces, and 
$\vec\beta$ is the shift vector characterising the shift of each spacial hypersurface.
As before, $\mathcal{S}(\ten A) := \ten A^\top - \tr(\ten A) \ten I$.
  The lapse $\alpha $ and the shift $\vec\beta$ can be chosen arbitrarily as they reflect the freedom to choose coordinates.  The first equation of \eqref{eqn:ADM} is the evolutionary equation, and the rest are the constraint equations, which are automatically preserved by the time evolution.

Although the ADM formulation has a clear geometric and physical structure, it is not well suited for numerical discretisation in its original form, as it is not strongly hyperbolic. Various modifications of the ADM formulation have been proposed and used in numerical computation, such as the successful BSSNOK formulation \cite{alcubierre2008introduction,nakamura1987general,shibata1995evolution,baumgarte1998numerical} and the approaches based on generalised harmonic coordinates \cite{pretorius2005evolution}. The equation \eqref{eqn:ADM} allows us freedom of reformulation. In fact, one can choose proper lapse $\alpha$ and shift $\vec \beta$, and add certain forms of the constraint equations to the evolution equations to obtain better mathematical properties.

Various formulations of the Einstein equations have been studied in the literature with different use of unknowns and different spacetime decompositions etc. In particular, first-order hyperbolic formulations are attractive for numerical purposes. One may obtain formulations with a similar structure as the Maxwell equations. 
One of such examples is the {\it Einstein--Bianchi formulation} \cite{friedrich1985hyperbolicity,anderson1997einstein,choquet2008general} that has been recently investigated in various numerical works \cite{quenneville2015new,hu2021conforming2}. Next, we briefly discuss the derivation of the Einstein-Bianchi formulation. 

From the Bianchi identity, we have
$$
\nabla_{\alpha}R^{\alpha}_{~~\beta, \lambda\mu}+\nabla_{\mu}R_{\lambda\beta}-\nabla_{\lambda}R_{\mu\beta}=0.
$$ 
Using the Einstein equations $R_{\alpha\beta}=\rho_{\alpha\beta}$,
\begin{align*}
\nabla_{\alpha}R^{\alpha}_{~~\beta,\lambda\mu}=\nabla_{\lambda}\rho_{\mu\beta}-\nabla_{\mu}\rho_{\lambda\beta}.
\end{align*}
The constraint and evolutionary equations come from different components.

Define
$$
\ten {E}_{ij}=R^{0}_{ \ i0j}, \quad\ten{D}_{ij}=\frac{1}{4}\eta_{ihk}\eta_{jlm}R^{hk, lm},
$$
$$
\ten{H}_{ij}=\frac{1}{2}N^{-1}\eta_{ihk}R^{hk}_{\ \ 0j}, \quad \ten{B}_{ji}=\frac{1}{2}N^{-1}\eta_{ihk}R_{0j}^{\ \ hk}.
$$
Now $\ten E, \ten D, \ten H, \ten B$ satisfy an equation of Maxwell's type. Linearisation around the Minkowski spacetime gives
\begin{subeqnarray}\label{einstein-bianchi}
\ten B_{t}+\nabla\times \ten E&=&0, \\
\ten E_{t}-\nabla\times \ten B& =&0.
\end{subeqnarray}
Here $\ten E$ and  $\ten B$ are Traceless-Transverse (TT) tensors, i.e., symmetric, tracefree, and divergence-free matrix fields. These conditions are automatically preserved by the time evolution.

The Einstein-Bianchi system \eqref{einstein-bianchi} has a similar form as the Maxwell equations, except that the variables are TT tensors, rather than vector fields or differential forms. Correspondingly, the differential structures of \eqref{einstein-bianchi} are encoded in other complexes than the de~Rham complex. The curl of TT tensors are encoded in the conformal Hessian complex \eqref{conformal-hessian}.
The relevant Hodge wave equation (a wave version of the Hodge--Laplacian \cite{quenneville2015new}) reads 
\begin{align*}
\sigma_{t}&=\div\div \ten E, \\
\ten E_{t}&=-\dev\hess \sigma-\curl \ten B,\\
\ten B_{t}&=\sym\curl\ten E,
\end{align*}
where the variables are taken as below (here we use $\mathbb{R}$ and $ \mathbb{S}\cap \mathbb{T}$ to denote the corresponding scalar and $ \mathbb{S}\cap \mathbb{T}$-valued function spaces etc.):
\begin{equation*}
\begin{tikzcd}
\mathbb{R} {\arrow[r,  ->,  "\dev\hess", shift left=1]\arrow[r,  <-, swap, "\div\div", shift right=1]} & \mathbb{S}\cap \mathbb{T} {\arrow[r,  ->,  "\sym\curl", shift left=1]\arrow[r,  <-, swap, "\curl", shift right=1]} &  \mathbb{S}\cap \mathbb{T}
\end{tikzcd}
\end{equation*}
$$
\hspace{-0.5cm}\sigma \hspace{1.5cm}  \ten E \hspace{1.5cm} \ten B
$$
As discretising conformal complexes leads to significant challenge, for numerical purposes, one may relax some symmetry conditions and let them free propagate (on the continuous level, if these conditions hold at the initial time, then they are preserved at any later time -- this property may not hold on the discrete level precisely though, due to discretisation errors). Two such formulations are based on the de~Rham complex and the Hessian complex (or the divdiv complex) respectively:
\begin{itemize}
\item Option 1 for relaxed symmetries: using the de~Rham complex
\begin{align*}
\vec\sigma_{t}&=\div \ten E, \\
\ten E_{t}&=-\grad\vec \sigma-\curl \ten B,\\
\ten B_{t}&=\curl\ten E.
\end{align*}
\begin{equation*}
\begin{tikzcd}
\mathbb{V} {\arrow[r,  ->,  "\grad", shift left=1]\arrow[r,  <-, swap, "-\div", shift right=1]} & \mathbb{M}  {\arrow[r,  ->,  "\curl", shift left=1]\arrow[r,  <-, swap, "\curl", shift right=1]} & \mathbb{M} 
\end{tikzcd}
\end{equation*}
$$
\hspace{-0.cm}\vec \sigma \hspace{1.2cm}  \ten E \hspace{1.2cm} \ten B
$$
\item Option 2 for relaxed symmetries: using the divdiv or the Hessian complex
\begin{align*}
\sigma_{t}&=\div\div \ten E, \\
\ten  E_{t}&=-\hess \sigma-\sym\curl \ten B,\\
\ten B_{t}&=\curl\ten E.
\end{align*}
\begin{equation*}
\begin{tikzcd}
\mathbb{R} {\arrow[r,  ->,  "\hess", shift left=1]\arrow[r,  <-, swap, "\div\div", shift right=1]} &  \mathbb{S}  {\arrow[r,  ->,  "\curl", shift left=1]\arrow[r,  <-, swap, "\sym\curl", shift right=1]} &  \mathbb{T} 
\end{tikzcd}
\end{equation*}
$$
\hspace{0.2cm}\sigma \hspace{1.2cm}  \ten E \hspace{1.2cm} \ten B
$$
 \end{itemize}

 Before closing this section, we recall some other connections between general relativity and differential complexes. 
 
 In electromagnetism and Maxwell equations, one often uses a magnetic potential $\vec A$ satisfying $\nabla\times \vec A=\vec B$. At least in the linearised case, it is not hard to imagine that one can obtain potential fields for the Einstein equations. For example, the conformal deformation complex \eqref{conformal-complex-smooth} exactly claims that TT tensors can be written as the Cotton-York tensors of symmetric and traceless tensors when there is no nontrivial cohomology. Einstein constraint equations for linearised gravity were investigated in \cite{beig2020linearised}, where the authors developed various complexes on Einstein manifolds from scratch.  The ``scalar'', ``momentum'', and ``conformal'' complexes developed in \cite{beig2020linearised} correspond to the $\div\div$, $\curl\div$, and the elasticity complex in this paper and in the BGG construction \cite{arnold2021complexes,vcap2023bgg}, respectively.

Another connection roots in the analogy between general relativity and generalised continuum models. General relativity and the Cosserat models are the two main motivations for Cartan's development of the concept of torsion \cite{scholz2019cartan}. Quoting Kr\"oner in this habilitation \cite[Section 18]{kroner1960general}, 
\begin{quote}
Every expert on general relativity theory who studies the general continuum theory of dislocations and internal stresses will recognise the great similarity between the two theories, which will considerably enhance his understanding of the latter one. ...  we believe that an ongoing investigation of the connections between general relativity and the general continuum theory of dislocations and internal stresses can be of considerable benefit to both theories.
\end{quote}
The Einstein-Cartan theory can be regarded as a space-time defect theory \cite{ruggiero2003einstein}.



\section{Graphs, networks, and data analysis}\label{sec:graph}

Finite elements are piecewise polynomials on a triangulation. The lowest order Whitney $k$-forms have degrees of freedom on $k$-cells. These degrees of freedom reflect a discrete topological structure. Particularly, the cohomology of the discrete de Rham complex is isomorphic to the continuous version.  Such topological structures can be established on other discrete structures as well. In this section, we briefly review the construction on graphs. 

Let $G = (V,E)$ be an undirected graph with vertex set $V = \{1,2,\dots,n\}$ and edge set $E \subset \{[i,j]: i<j\}$. 

\begin{definition}[Cliques]
Let \(G = (V,E)\) be an undirected graph with vertex set \(V\) and edge set \(E \).
A subset of vertices \(Q \subseteq V\) is called a \emph{$k$-clique} if, for $Q$ has $k$ vertices and every pair of distinct vertices \(u, v \in Q\), the edge \((u,v)\) is in \(E\). 
\end{definition}
 We denote $K_{k}$ to be the set of $k$-cliques. One can generalise the definition of boundary operators to cliques: 
 $$
 \partial [i_{1}, \cdots, i_{k}]=\sum_{j=1}^{k}(-1)^{j+1}[i_{1}, \cdots, \widehat{i_{j}}, \cdots,  i_{k}].
 $$
 Consequently, cliques form a simplicial complex. Similarly, 
we denote by $C^q(G,\mathbb{R})$ the space of real-valued $q$-cochains on $G$. Here a $q$-cochain $f$ is a skew-symmetric function on $q$-cliques, i.e., 
$$
f  ([i_{1}, \cdots, i_{k}, \cdots,   i_{\ell}, \cdots, i_{q}])=-f([i_{1}, \cdots, i_{\ell}, \cdots,   i_{k}, \cdots, i_{q}]).
$$ 
Note that in the definition above, a $q$-clique or a $q$-cochain involves $q$ vertices. In simplicial homology theory, this is often referred to as a $(q-1)$-simplex or $(q-1)$-cochain. The dimension of the $q$-th homology space $\mathcal{H}_{q}:=\ker(\partial_{q})/\ran(\partial_{q+1})$ is called the $q$-th Betti number.

Similar to simplicial cohomology, the \emph{coboundary operator} $d^q: C^q(G,\mathbb{R}) \;\longrightarrow\; C^{q+1}(G,\mathbb{R})$ is defined by 
$$
d^{q}f([i_{1}, \cdots, i_{p+1}])=\sum_{j=1}^{q+1}(-1)^{j+1}f ([i_{1}, \cdots,\widehat{i_{j}}, \cdots i_{q+1}]).
$$
Assume that the space of $p$-cochains are equipped with an inner product:
$$
\langle f, g\rangle:=\sum_{i_{1}<\cdots<i_{q}}\omega_{i_{1}, \cdots, i_{q}}f([i_{1}, \cdots, i_{q}])g([i_{1}, \cdots, i_{q}]),
$$
where $\omega_{i_{1}, \cdots, i_{q}}$ is the weight on the $q$-clique $[i_{1}, \cdots, i_{q}]$. Then 
the \emph{codifferential operator} $\delta^{q} : C^{q}(G,\mathbb{R}) \to C^{q-1}(G,\mathbb{R})$ is defined as the adjoint of $d^{q-1}$ with respect to the given inner product, i.e., 
\[
\langle d^{q} f, g\rangle_{C^{q+1}} = \langle f,\delta^{q+1} g\rangle_{C^q},\quad f \in C^q(G,\mathbb{R}), ~~ g\in C^{q+1}(G,\mathbb{R}),
\]
where two levels of inner product are used. With the coboundary (exterior derivative) and the codifferential operators, one can define the $q$-th Hodge--Laplacian operator as usual:
\[
\Delta^q =d^{q-1}\delta^{q} + \delta^{q+1}d^q.
\]
Then many definitions and properties follow as the the case of simplicial cohomology. For example, we have a complex
\begin{equation*}
\begin{tikzcd}
\cdots \arrow{r} & C^{q-1}(G, \mathbb{R}) \arrow{r}{d^{q-1}} & C^{q}(G, \mathbb{R}) \arrow{r}{d^{q}} & C^{q+1}(G, \mathbb{R}) \arrow{r} &  \cdots.
\end{tikzcd}
\end{equation*}
Correspondingly, we have the Hodge decomposition
\[
C^q(G,\mathbb{R}) = \ran(d^{q-1}) \oplus \ran(\delta^{q+1}) \oplus \ker(\Delta^q).
\]

Cliques and cochains on graphs can be represented by vectors and matrices. Then the coboundary and the codifferential operators can be expressed by matrix operations, enabling computation with these concepts. Problems of significant mathematical and scientific importance arise and can be potentially investigated numerically. Below, we show an example of {\it topology of random graphs}. 

Random graphs has many important applications. There are various models incorporating randomness in graphs. A classical one is the Erdős--Rényi model, where each edge has a fixed probability $p$ of being present or absent, independently of the other edges. Topology and spectral properties of random graphs are of great interest. A celebrated result in the theory of random graphs is the Erdős--Rényi theorem on the threshold for connectivity \cite{erdds1959random}. 
\begin{theorem}[Erdős and Rényi]
If $p=(\log n+\omega(n))/n$ and $\omega\rightarrow \infty$ as $n\rightarrow \infty$, then $G(n, p)$ is almost always connected. If $\omega(n)\rightarrow \infty$, then $G(n, p)$ is almost always disconnected. 
\end{theorem}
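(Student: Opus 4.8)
The plan is to show that, at the threshold and above it, the only obstruction to connectivity is the presence of isolated vertices, and then to close the remaining gap with a union bound over components of intermediate size. (As written the second clause should of course read $\omega(n)\to-\infty$; I describe the argument for both signs of the deviation term.) This is the classical line of argument of Erdős and Rényi \cite{erdds1959random}.

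First I would introduce $X_{0}$, the number of isolated vertices of $G(n,p)$, and compute $\mathbb{E}[X_{0}]=n(1-p)^{n-1}$. With $p=(\log n+\omega(n))/n$ one has $np^{2}\to 0$, so $\log\bigl(n(1-p)^{n-1}\bigr)=\log n-(n-1)p+O(np^{2})=-\omega(n)+o(1)$, i.e.\ $\mathbb{E}[X_{0}]=e^{-\omega(n)}(1+o(1))$. In the regime $\omega(n)\to-\infty$ this forces $\mathbb{E}[X_{0}]\to\infty$, and I would then run the second moment method: compute $\mathbb{E}[X_{0}(X_{0}-1)]=n(n-1)(1-p)^{2n-3}$, verify $\mathbb{E}[X_{0}^{2}]/\mathbb{E}[X_{0}]^{2}\to 1$, and conclude by Chebyshev's inequality that $\Pr[X_{0}=0]\to 0$; since a graph with an isolated vertex is disconnected, $G(n,p)$ is almost always disconnected.

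In the regime $\omega(n)\to+\infty$ we instead get $\mathbb{E}[X_{0}]\to 0$, so Markov's inequality shows $G(n,p)$ has no isolated vertex almost always; it remains to exclude connected components of size $k$ for $2\le k\le\lfloor n/2\rfloor$. For fixed such $k$, a component on a prescribed $k$-set must contain a spanning tree and have no edges to the other $n-k$ vertices, so by Cayley's formula $\Pr[\exists\text{ component of size }k]\le\binom{n}{k}k^{k-2}p^{k-1}(1-p)^{k(n-k)}$. Summing over $k$ and using $\binom{n}{k}\le(en/k)^{k}$ and $1-p\le e^{-p}$, I would split the range at, say, $k\le n/\log n$ versus $k>n/\log n$: in the first range the factor $e^{-pk(n-k)}$ contributes roughly $n^{-k}$, which cancels the $n^{k}$ coming from $\binom{n}{k}$ and leaves summands tending to $0$; in the second range $e^{-pk(n-k)}\le e^{-pk n/2}$ is already exponentially small in $n$ and swamps all prefactors. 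Hence the whole sum is $o(1)$, and together with the absence of isolated vertices this gives that $G(n,p)$ is almost always connected.

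The moment computations and the two tail estimates are routine; \textbf{the main obstacle is the intermediate-$k$ regime of the union bound}, where one must pick the split point correctly and control the $k^{k-2}$ factor against $(1-p)^{k(n-k)}$ uniformly in $n$ — this is the step where the precise form $p=(\log n+\omega(n))/n$ is genuinely used.
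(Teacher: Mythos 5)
The paper states this theorem without proof --- it appears in the survey's Section~\ref{sec:graph} purely as a cited classical result (Erd\H{o}s--R\'enyi 1959), so there is no in-paper argument to compare against. Your proposal is the standard first-moment/second-moment argument on isolated vertices plus a union bound over component sizes, and it is essentially sound; you are also right that the second clause of the statement as printed contains a typo and should read $\omega(n)\to-\infty$.

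Two points deserve tightening. First, the asymptotic $\mathbb{E}[X_{0}]=e^{-\omega(n)}(1+o(1))$ requires $np^{2}\to 0$, i.e.\ $\omega(n)=o(\sqrt{n})$; for faster-growing $\omega$ this expansion fails, but connectivity is monotone in $p$, so that regime follows from the slower one --- you should say this explicitly rather than assert $np^{2}\to 0$ unconditionally. Second, and more substantively, your treatment of the large-$k$ range of the union bound is not quite right as written: with $k\le n/2$ one only gets $e^{-pk(n-k)}\le e^{-pkn/2}=n^{-k/2}e^{-k\omega/2}$, which does \emph{not} by itself swamp the factor $n^{k}$ coming from $\binom{n}{k}\le (en/k)^{k}$. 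You must retain the $p^{k-1}$ factor in this range as well; doing so gives summands of order $n\,k^{-2}\bigl(e\,p\,n\,e^{-pn/2}\bigr)^{k}\approx n\,k^{-2}\bigl(e\log n/\sqrt{n}\bigr)^{k}$, which decay geometrically in $k$ and sum to $o(1)$. This is exactly the intermediate-regime difficulty you flagged, so the fix is within the scope of what you anticipated, but the claim that the exponential factor alone "swamps all prefactors" would not survive a careful write-up.
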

The Erdős--Rényi theorem can be viewed as a statement on the $0$-th homology of the graph. A natural question would be its higher dimensional analogues, i.e., how the $k$-th Betti number $\beta_{k}$ behaves as $p$ changes. Intuitively, one can imagine that for small and large $p$, $\beta_{k}$ is almost always zero (see Figure \ref{fig:betti}). This is because for small $p$, the connection is so week that no $k$-dimensional cliques are formed; while for large $p$, the graph is highly connected and is therefore topologically trivial.  Further questions include when the Betti number starts to grow and when to vanish (points of \emph{phase transition}). Various results in this direction can be found in \cite{kahle2009topology}. For example, it was shown that if $p=n^{\alpha}$ with $\alpha<1/k$, then the $k$-th homology is almost always zero \cite{kahle2009topology}. We also refer to  \cite{kahle2014topology} for a survey of topology of random simplicial complexes.

\begin{figure}
	   \includegraphics[width=0.49\textwidth]{./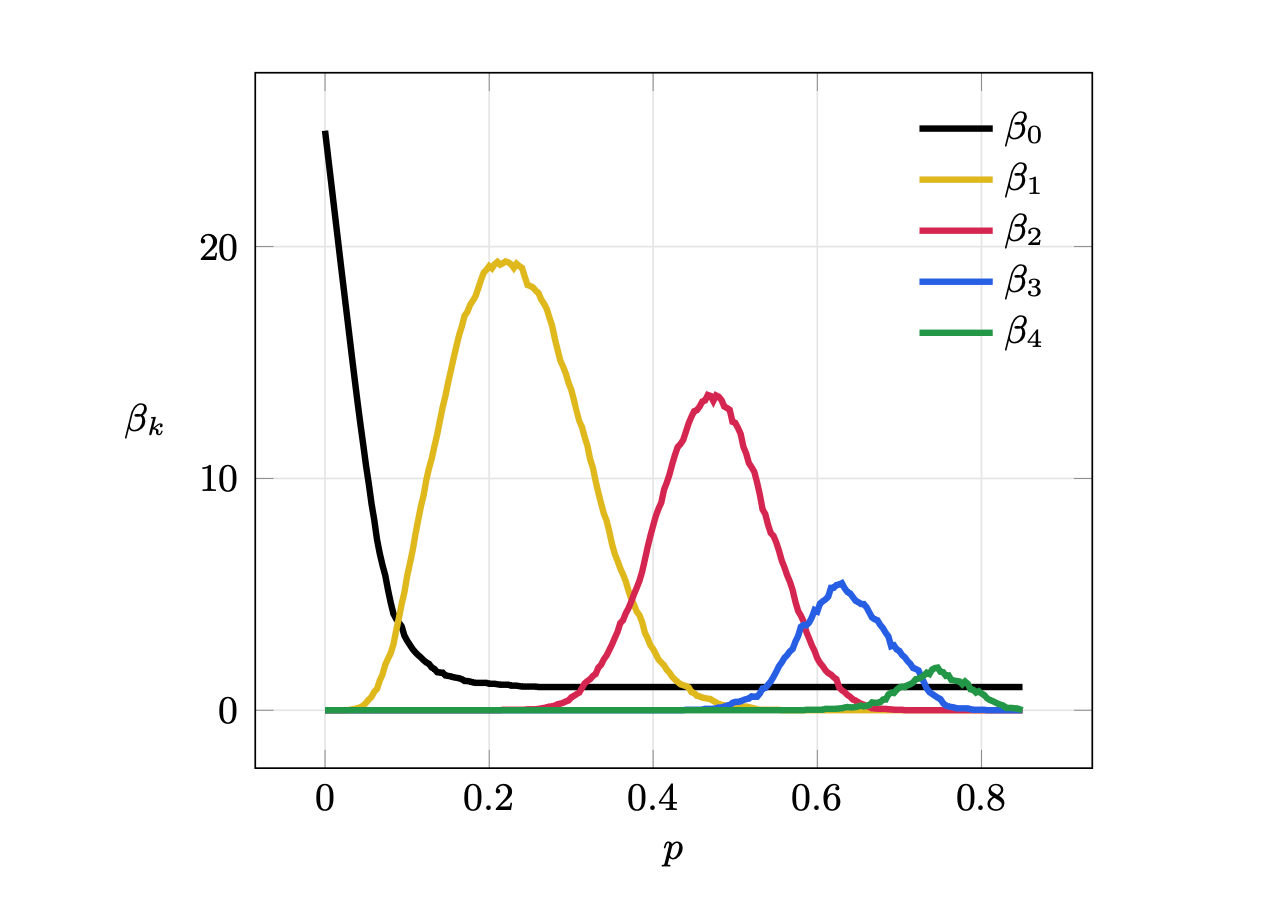}
	   \caption{Average Betti numbers (\(\beta_0\) to \(\beta_4\)) of random graphs with 25 vertices as a function of edge probability \(p\). For small \(p\), the graphs are sparse, and higher Betti numbers vanish. As \(p\) increases, Betti numbers grow, peak, and decrease to zero when the graph becomes fully connected. The figure and computation are part of L\'az\'ar Bert\'ok's MSc thesis \cite{lazar-thesis}.}
	   \label{fig:betti}
\end{figure}

In Singer's words, the spirit of investigating topological properties of random graphs is summarised as follows: \cite{kahle2014topology,raussen2005interview}:
\begin{quote}
I predict a new subject of statistical topology. Rather than
count the number of holes, Betti numbers, etc., one will be
more interested in the distribution of such objects on noncompact manifolds as one goes out to infinity.\\
\null\hfill -- Isadore Singer,  \cite{raussen2005interview}
\end{quote}
``Statistical topology'' leaves open questions to answer. From a computational perspective, there are open questions such as improving the computational efficiency for finding cliques (see Figure \ref{fig:betti}). 

In many real-world applications, such as social networks and biological networks, {interactions} often go beyond simple pairwise links. This is a motivation for developing \emph{hypergraphs} \cite{bick2023higher,antelmi2023survey} and higher dimensional cliques and discrete differential forms.  Back to our discussions on (co)homology of graphs (not necessarily random graphs), we mention several applications in data sciences, and particularly, in Topological Data Analysis (TDA).



  \begin{example}[Ranking problem]
 A classical application of Hodge theory in data science is the ranking problem, see, e.g., \cite{austin2012s,jiang2011statistical}.  Suppose that we have nodes $x_{i}, i=1, \cdots, N$ and want to rank them. A voter may vote for each pair of the nodes. For example, voter $\alpha$ may assign a number $Y_{ij}^{\alpha}$ to the pair of node $i$ and node $j$. The node $i$ is preferred If $Y_{ij}^{\alpha}>0$, and $j$ is preferred if $Y_{ij}^{\alpha}<0$. The voter may not compare all the nodes. We set $Y_{ij}^{\alpha}=0$ of $i$ and $j$ are not compared by $\alpha$. As a convention, we require $Y_{ij}^{\alpha}=-Y_{ji}^{\alpha}$. We may do an average for $\alpha$, leading to a pairwise ranking function $Y_{ij}$. We still have $Y_{ij}=-Y_{ji}$. Therefore $Y_{ij}$ is an alternating function on 2-cliques, which fits with the definition of 2-cochains on graphs. For a consistent ranking, we should have $Y_{ij}+Y_{jk}=Y_{ik}$, or, in another form, 
 \begin{equation}\label{Y-curl-free}
 Y_{ij}+Y_{jk}+Y_{ki}=0.
 \end{equation}
 This is exactly the exterior derivative ($\curl$) of $Y$ acting on the 3-clique $[i, j, k]$. Therefore the $\curl$ of $Y$ measures the local inconsistency of the ranking. The ranking is consistent if $Y_{ij}$ is obtained by a pointwise comparison, i.e., $Y_{ij}=s_{i}-s_{j}$ for some function $s$ defined at vertices (here $s_{i}:=s([i])$ is the evaluation of $s$ on 1-cliques), i.e., $Y$ is a gradient field. The above discussions give an intuitive interpretation for the Hodge decomposition:
 \begin{equation}\label{graph-hodge}
 Y=\grad s+\curl z +H,
 \end{equation}
 where $H$ is the harmonic form which is both $\curl$-free and $\div$-free. Now $\grad s$ and $H$ are $\curl$-free and thus are the consistent components; while $\curl Y=\curl\curl Z$ characterises the local consistency. The gradient component $\grad s$ is the best place where we can rank the nodes. Therefore as an algorithm, one computes the Hodge decomposition \eqref{graph-hodge}, and $s$ can be regarded as the score for the ranking. 
 
 We mention similarities between continuum defects discussed in Section \ref{sec:solid} and the ranking problem, which seem to be two rather distinct areas. In fact, in continuum mechanics, for compatible deformations, we have compatibility conditions expressed by $\curl$-free or $\inc$-free conditions (see Section \ref{sec:solid}). In general, we use the violation of these conditions to model continuum defects. In the ranking problem, consistent ranking satisfies the $\curl$-free condition \eqref{Y-curl-free}. In general, we can use $curl$ of the edge flow to measure the inconsistency. The analogue between these two different areas is exactly one of the many facets of the common cohomological structure, or the discrete topological structure, which is emphasised in this article. 
 \end{example}
 



 \begin{example}[Persistent homology and topological data analysis]
 Although topological data analysis (TDA) often uses homology directly, the harmonic space $\ker(\Delta^{p})$ is intimately related to homology groups. Therefore, understanding and computing the discrete de~Rham complex on graphs and their cohomologies can be relevant to topological feature detection (e.g.\ loops, holes) in high-dimensional data.
 
In typical contexts of persistent homology, one can build a sequence of complexes $C_{\alpha}^{\bs}$ with a family of parameters $\alpha$. For example, $\alpha$ can be distance thresholds (i.e., two points with a distance less than $\alpha$ will be considered to be connected to each other). Then, one may track events of homology classes as $\alpha$ changes. This is a quite general idea, though.
 For example, one can also track cohomological information, such as the Betti numbers. We refer to \cite{su2024persistent,basu2024harmonic,cang2020persistent,liu2021hypergraph} for recent discussions on combining the idea of persistent homology with Hodge--Laplacian and cohomology theories. 
 \end{example}

 We refer to \cite{lim2020hodge} and the references therein for further applications of the Hodge--Laplacian on graphs. 
 
 \begin{remark}[Applications of sheaf theory in finite elements and machine learning]
As another example illustrating the unified structures underlying finite elements, data, and networks, we mention sheaf theory \cite{bredon1997sheaf}. In geometry, a sheaf assigns to every open set of a topological space a collection of data, together with restriction maps that ensure compatibility of local data on overlaps. In essence, a sheaf is a tool for glueing local information into a global picture. In finite element theory, the problem of constructing smooth finite elements or splines can be formulated as matching derivatives of piecewise polynomials across cell boundaries, which is inherently related to the notion of sheaves \cite{christiansen2010finite,hu2025sharpness}. In machine learning, new network architectures can be obtained by introducing restriction maps that gather and align information across graph nodes \cite{ayzenberg2025sheaf}. This example again demonstrates universal structures in numerical methods and data architecture, and indicates the potential for establishing further connections.
\end{remark}

 \begin{remark}[Applications of sheaf theories in finite elements and machine learning]
 As another example demonstrating the unified structures in finite elements and data and network theories, we mention sheaf theories \cite{bredon1997sheaf}. Sheaf theory in geometry assigns to every open set of a topological space a collection of data, together with restriction maps that ensure local data is compatible on overlaps. A sheaf is a tool for glueing local information into a global picture. In the finite element theory, the problem of constructing smooth finite elements or splines can be formulated as matching derivatives of piecewise polynomials across the boundary of cells, therefore related to sheaves in essence \cite{christiansen2010finite,hu2025sharpness}; in machine learning, one may obtain new network architectures by introducing a map gathering information on graph nodes \cite{ayzenberg2025sheaf}. This example further demonstrates the universal structures in numerical methods and data applications, and the potential in establishing new connections.  
 \end{remark}


\section{Finite elements: a glimpse}

The article discussed various topics from a differential complex and cohomological point of view. As complexes and cohomology encode notions of existence, uniqueness, stability and rigidity, it is not surprising that differential complexes play a fundamental role in these seemingly unrelated topics, justifying {\it many facets of cohomology}. Each problem has unique algebraic structures, and their role differs from case to case. Therefore, a programme pursued in recent years is to investigate algebraic and geometric structures systematically and {\it discretise the whole theory}. 


Structure-preserving discretisation serves as an important motivation for structure-aware formulations, or more precisely, clarifying the underlying differential structures. Discretisation or discrete theories are not the focus of this article. In particular, the whole machinery of discretising PDEs with Finite Element Exterior Calculus is not covered in this paper.  However, we provide a glimpse of some recent developments of finite element versions of differential complexes. 

For the de~Rham complex, the Whitney forms \cite{hiptmair2001higher,whitney2012geometric,bossavit1988whitney} 
are widely accepted as the canonical discretisation of differential forms. 
\begin{figure}
\begin{center}
\includegraphics[width=4in]{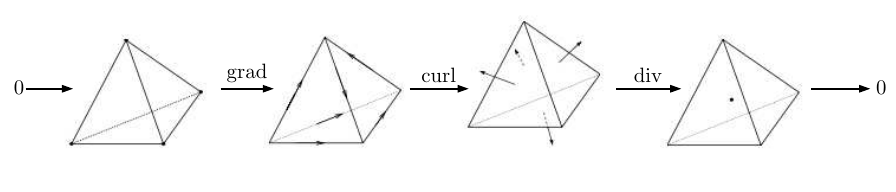} 
\end{center}
\caption{A finite element de Rham complex in $\mathbb{R}^{3}$ consisting of Whitney forms. Here, each slot denotes a finite element space (piecewise polynomials with certain continuity on a triangulation).  The four spaces are, respectively, the Lagrange element (classical $C^{0}\mathcal{P}_{1}$ finite element, or Courant triangle), the Nédélec element \cite{Nedelec.J.1980a}, the Raviart--Thomas element \cite{Raviart.P;Thomas.J.1977a}, and piecewise constants. The first and last spaces discretise scalar functions, while the two middle spaces discretise vector-valued functions. The degrees of freedom are located at vertices, edges, faces, and inside tetrahedra, reflecting the discrete topological structure. The differential operators $\grad$, $\curl$, and $\div$ map each finite element space to the next.  
\\
 Bossavit \cite{bossavit1988whitney} realised that these spaces in fact coincide with the discrete differential forms defined from a topological viewpoint by Whitney (referred to as the Whitney forms \cite{whitney2012geometric}). Hiptmair \cite{hiptmair1999canonical, hiptmair2001higher} and Arnold, Falk, and Winther \cite{Arnold.D;Falk.R;Winther.R.2006a, Arnold.D;Falk.R;Winther.R.2010a} systematically developed Finite Element Exterior Calculus. 
\\  In discrete topology or Discrete Exterior Calculus \cite{hirani2003discrete}, variables are purely discrete, defined on discrete topological entities (vertices, edges...). In Finite Element Exterior Calculus, these degrees of freedom uniquely determine a polynomial field inside the tetrahedron ({\it unisolvency}). Therefore, finite elements, as piecewise functions, provide a more natural setup for a rigorous mathematical analysis than merely discrete definitions.}
\label{fig:whitney}
\end{figure}
In the lowest order case (Figure \ref{fig:whitney}), $k$-forms are discretised on $k$-cells; the finite elements are unisolvent (the cochains are extended to the interior of simplices to polynomials); and the discrete forms have continuous traces across the faces of the cells; and importantly, the cohomology of the discrete de~Rham complex is isomorphism to the continuous version, reflecting topology of the underlying domain. A periodic table of finite elements \cite{arnold2014periodic} incorporates the Lagrange, Nédélec, and Raviart-Thomas finite elements and their high-order and high-dimensional versions.

In this article, we have shown that different problems involve different differential structures. A natural but challenging question is to seek analogues of Whitney forms for complexes beyond the classical de~Rham sequence. This pursuit is motivated by important applications in continuum mechanics, differential geometry, and general relativity. In recent years, considerable effort has been devoted to discrete BGG complexes (see, e.g., \cite{chen2021geometric,chen2020discrete,chen2021finite,chen2020finite,chen2021finite2,hu2021conforming,hu2021conforming2,christiansen2020discrete,Arnold2006a,arnold2008finite,christiansen2019finite}), to make them computable. Despite progress such as tensor-product constructions \cite{bonizzoni2025discrete} and conforming finite element complexes
\cite{chen2024h,chen2025complexes}, a systematic and canonical discretisation was still missing, until some recent progress.



 As the development of canonical finite elements for the de~Rham complex, some ingredients for addressing this problem have already been in place for specific applications.  Particularly, Christiansen \cite{christiansen2011linearization} interpreted Regge calculus, a discrete scheme for quantum and numerical gravity, as a finite element and fitted it into a discrete elasticity complex. There is perhaps little hesitation in appreciating the construction in \cite{christiansen2011linearization} as the canonical discretisation for the elasticity complex. The discrete metric is piecewise constant (piecewise flat manifolds), and correspondingly, the discrete curvature is distributional with conic singularities (represented by an angle deficit). The Christiansen-Regge complex is formally self-adjoint and the degrees of freedom enjoy a simple and elegant form. In another direction of research, Schöberl and collaborators systematically investigated distributional finite elements for solving problems in continuum mechanics and a posteriori error estimators \cite{braess2008equilibrated,pechstein2011tangential,gopalakrishnan2020mass}.  These finite elements have relaxed continuity, and de~Rham's currents and generalisations appear in derivatives and thus in the complexes. 

By gathering these results and incorporating some new ingredients, the recent work \cite{hu2025finite} investigated an extended version of the periodic table of finite elements discretising $\ell$-form-valued $k$-forms $\Lambda^{k, \ell}$ (double forms) with algebraic symmetries encoded in the BGG diagrams  \cite{arnold2014periodic} (see Figure \ref{fig:table} for the lowest-order case in 3D). 
The table shows several features. First, its discrete cohomology remains isomorphic to the continuous version. Second, in the diagram, the lower triangular region ($k \leq \ell$) uses function-based discretisations, while the upper triangular region ($k > \ell$) involves Dirac delta distributions. The transition from functions to distributions within each complex occurs in the “zig-zag” step of the BGG construction. Moreover, the Hessian complex and the div\,div complex form a dual pair, while the elasticity complex is self-adjoint. In the function region ($k \leq \ell$), $\ell$-form valued $k$-forms are discretised on $k$-simplices; in the distribution region ($k > \ell$), they are attached to the dual $k$-simplices. This framework is not limited to three dimensions but can be extended to arbitrary dimensions and polynomial orders \cite{hu2025finite}, providing a flexible tool for a wide range of numerical applications.  
 
The classical de~Rham complex \cite{arnold2014periodic} is a very special case within this extended periodic table: in that complex all spaces are \emph{classical finite elements (piecewise polynomials)}. In the more general case, the first few spaces of a complex are classical finite elements, while the later spaces are Dirac measures. 

Symmetries and decompositions of tensors (particularly for those arising in geometry, such as the metric and various notions of curvature) are encoded in finite element spaces and fit in discrete versions of BGG diagrams, extending results on differential forms (fully skew-symmetric tensors). This yields a FEEC version of discrete curvature in any space dimensions.

The tensorial finite elements, particularly the Regge element and generalisations, provide a new perspective for investigating discrete differential geometry. Recent progress on approximating geometric quantities with Regge-like finite elements can be found in \cite{gawlik2020high,gawlik2023finite,gopalakrishnan2024improved,gawlik2024finite,gopalakrishnan2023analysis}. Extending the Christiansen--Regge complex to the twisted complex, a pattern connected to Riemann--Cartan geometry emerged in \cite{christiansen2023extended}. In addition to being theoretically elegant, these intrinsic finite elements also shed light on challenging computational problems with complex geometry, such as shells \cite{neunteufel2021avoiding}.

A different perspective was recently given in \cite{berchenko2025finite}. 

We also refer to \cite{khu-review} for a further review of these {\it distributional} or {\it intrinsic finite elements}. However, detailed discussions of these advances are beyond the scope of this paper.

\section{Conclusion}

In this paper, we surveyed the role of differential complexes, exactness, and cohomology across various areas. From a mathematical perspective, the existence, uniqueness, and stability of differential equations are encoded in the notion of complexes. This echoes the fundamental roles that differential complexes play:
\begin{itemize}
\item  {Analysis:} Exactness and finite-dimensional cohomology imply that linear differential operators have closed range. As a result, many analytic results, such as Poincar\'e--Korn inequalities, follow. 
\item  {Continuum mechanics:} Differential complexes and the BGG machinery provide a systematic framework for encoding compatibility conditions, defects (incompatibility), and mixed-dimensional structures in continuum modeling. 
\item  {Fluid mechanics and magnetohydrodynamics:} Vortex and knot structures of field lines can be described in terms of de~Rham complexes. 
\item  {Differential geometry:} Fundamental objects such as the metric, curvature tensors (Gauss, Ricci, Riemann), Cotton--York, and torsion tensors naturally fit into sequences derived from the BGG machinery. Moreover, results such as rigidity and special cases of fundamental theorems of Riemannian geometry can be expressed as the exactness of nonlinear complexes. 
\item  {General relativity:} The Einstein equations can be reformulated into first-order systems, mimicking the structure of electromagnetism and the Maxwell equations. BGG complexes encode tensor symmetries and differential structures, playing a role analogous to the de~Rham complex for electromagnetism. 
\item  {Graphs and networks:} Discrete topological and geometric structures arise naturally in graphs and networks. 
\end{itemize} 

The de~Rham complex is purely topological, while other complexes derived from the BGG construction may require additional structures. For example, the elasticity complex requires a background metric. This both enriches and complicates the theory. Curved or nonlinear cases remain open for further investigation, and new applications can be expected. For example, replacing exterior derivatives $d^{\bs}$ by covariant exterior derivatives $d^{\bs}_{\nabla}$ in general destroys the complex property, since 
\[
d_{\nabla}^{k+1}\circ d_{\nabla}^{k}\neq 0
\]
involves nonvanishing curvature terms. Nevertheless, even in this case, differential sequences (rather than full complexes) should provide useful structures for establishing analytic results.  

Nonlinearity introduces additional challenges, as continuous structures may be lost in discretisation. For instance, every nonnegative smooth function can be written as the square of another function ($\forall w \ge 0, \, \exists u = \sqrt{w}$, such that $w = u^{2}$). This property is expressed by the exactness of the sequence
\[
 \begin{tikzcd}[column sep=large]
 0 \arrow{r} & C^{\infty} \arrow{r}{u\mapsto u^{2}} & C^{\infty}_{+} \arrow{r} & 0.
 \end{tikzcd}
\]
However, the same does not hold for polynomial spaces; the polynomial analogue
\[
 \begin{tikzcd}[column sep=large]
 0 \arrow{r} & \mathcal{P}_{k} \arrow{r}{u\mapsto u^{2}} & \mathcal{P}_{2k}^{+} \arrow{r} & 0 
 \end{tikzcd}
\]
is not exact. Furthermore, it remains to systematically establish nonlinear results, such as the nonlinear Korn inequalities \cite{ciarlet2006nonlinear}, from a nonlinear complex perspective.

These examples illustrate the broader potential and questions of differential complexes, extending beyond the topics presented in this paper.  

Finally, we did not discuss numerical analysis in detail, since the development of Finite Element Exterior Calculus \cite{arnold2018finite,Arnold.D;Falk.R;Winther.R.2006a,Arnold.D;Falk.R;Winther.R.2010a} over the past decades calls for a separate review.

\section*{Acknowledgement}

The work of KH was supported by a Royal Society University Research Fellowship (URF$\backslash$R1$\backslash$221398), an ERC Starting Grant (project 101164551, GeoFEM), and a Royal Society International Exchanges Grant (IEC$\backslash$NSFC$\backslash$233594). Views and opinions expressed are however those of the authors only and do not necessarily reflect those of the European Union or the European Research Council. Neither the European Union nor the granting authority can be held responsible for them.

Part of the article is based on short courses that the author gave at Institut Montpelliérain Alexander Grothendieck (IMAG), Université de Montpellier, hosted by ERC NEMESIS project PIs Daniele Di Pietro and Jérôme Droniou, and at University of Luxembourg (Spotlights in Computational Physics and Engineering), hosted by Adam Sky. The author would like to thank the invitations. The author also thanks anonymous reviewers for their helpful comments and Yuechen Zhu for reading a first version of the article with feedbacks.

\begin{figure}[ht]
\flushleft
\begin{tikzpicture}

  \node (deRham3D1) at (-2,0) {\includegraphics[width=0.7\textwidth]{./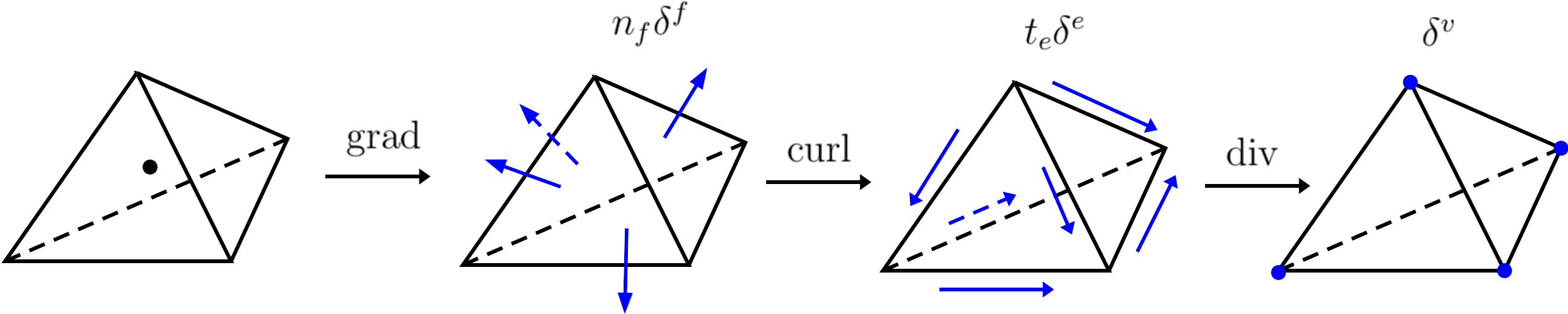}};
  \node at (-5,0.7) {\color{purple}$\Lambda^{0,0}$}; 
  \node at (-2.4,0.7) {\color{purple}$\Lambda^{1,0}$}; 
  \node at (0.5,0.7) {\color{purple}$\Lambda^{2,0}$}; 
  \node at (3.2,0.7) {\color{purple}$\Lambda^{3,0}$}; 
    \node at (5.4,0.) {\color{purple} \small distributional de Rham}; 
        \node at (5.4,-0.4) {\color{purple} \small Braess-Sch\"oberl 2008}; 
                \node at (5.4,-0.8) {\color{purple} currents};
                                \node at (1.7,1.8) {\color{blue} \textbf{distributions, Dirac measures}}; 
  \node (hessian) at (-2,-2.5) {\includegraphics[width=0.7\textwidth]{./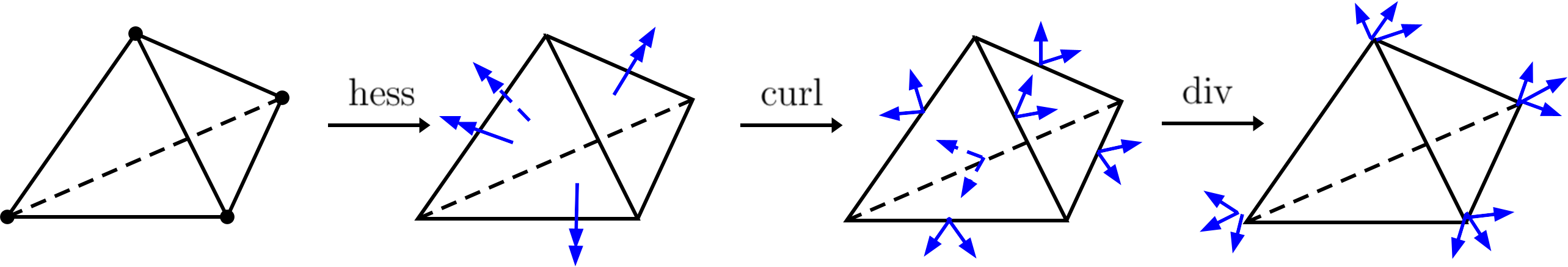}};
  \node at (-5,-1.7) {\color{purple}$\Lambda^{0,0}$}; 
  \node at (-2.4,-1.7) {\color{purple}$\Lambda^{1,1}$}; 
  \node at (0.5,-1.7) {\color{purple}$\Lambda^{2,1}$}; 
  \node at (3.2,-1.7) {\color{purple}$\Lambda^{3,1}$}; 
    \node at (5.4,-2.3) {\color{purple}Hu-Lin-Zhang 2025}; 
    
  \node (regge0) at (-2,-5) {\includegraphics[width=0.7\textwidth]{./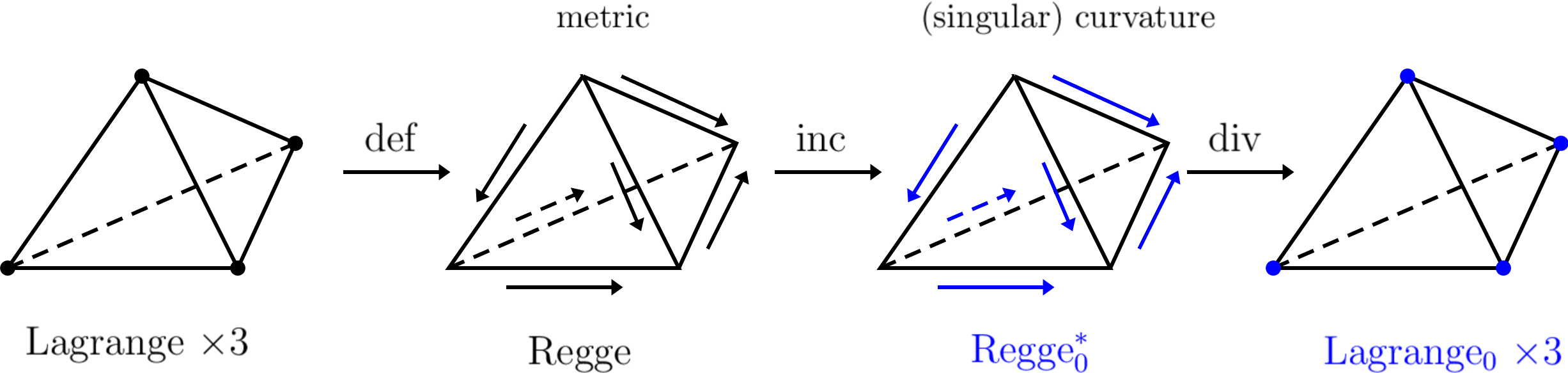}};
  \node at (-5,-4.2) {\color{purple}$\Lambda^{0,1}$}; 
  \node at (-2.4,-4.2) {\color{purple}$\Lambda^{1,1}$}; 
  \node at (0.5,-4.2) {\color{purple}$\Lambda^{2,2}$}; 
  \node at (3.2,-4.2) {\color{purple}$\Lambda^{3,2}$}; 
      \node at (5.4,-4.6) {\color{purple}Regge, Christiansen}; 
            \node at (5.4,-5.) {\color{purple}1961, 2011}; 
                        \node at (5.4,-5.4) {\color{purple}metric, curvature}; 
\node at (-5,-12) {\color{red} \textbf{finite elements, p.w. polynomials}};
  \node (divdiv) at (-2,-7.5) {\includegraphics[width=0.7\textwidth]{./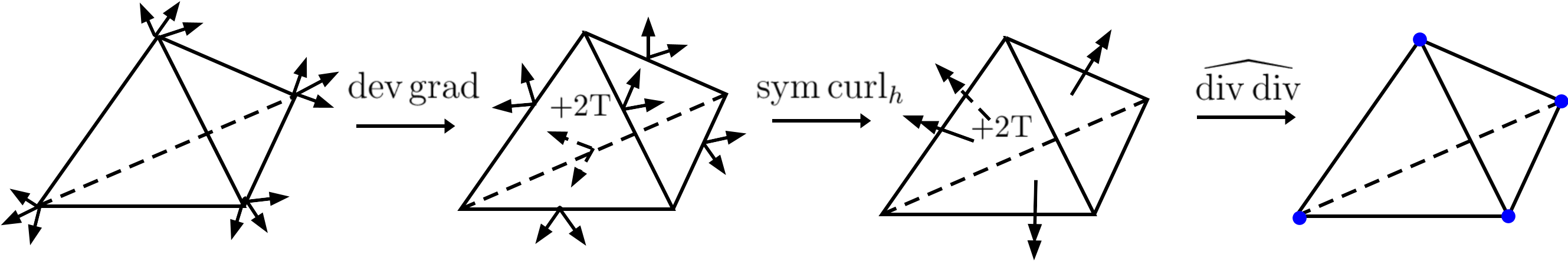}};
  \node at (-5,-6.7) {\color{purple}$\Lambda^{0,2}$}; 
  \node at (-2.4,-6.7) {\color{purple}$\Lambda^{1,2}$}; 
  \node at (0.5,-6.7) {\color{purple}$\Lambda^{2,2}$}; 
  \node at (3.2,-6.7) {\color{purple}$\Lambda^{3,3}$}; 
        \node at (5.6,-7.2) {\color{purple} \small Hellan, Herrmann, Johnson }; 
                \node at (5.4,-7.6) {\color{purple}\small Pechstein-Sch\"oberl 2011}; 
            \node at (5.4,-8) {\color{purple} \small Hu-Lin-Zhang 2025}; 
  \node (deRham3D) at (-2,-10) {\includegraphics[width=0.7\textwidth]{./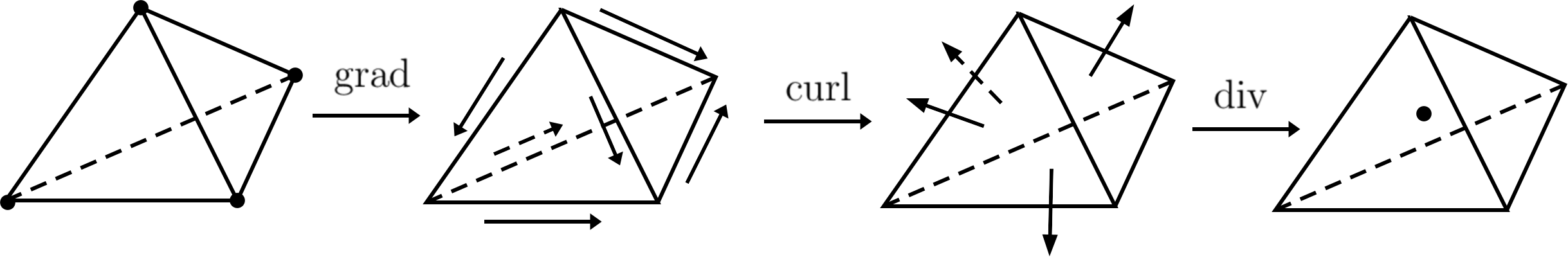}};
  \node at (-5,-9.2) {\color{purple}$\Lambda^{0,4}$}; 
  \node at (-2.4,-9.2) {\color{purple}$\Lambda^{1,4}$}; 
  \node at (0.5,-9.2) {\color{purple}$\Lambda^{2,4}$}; 
  \node at (3.2,-9.2) {\color{purple}$\Lambda^{3,4}$}; 
          \node at (5.8,-9.5) {\color{purple}\small N\'ed\'elec, Raviart--Thomas }; 
                \node at (5.4,-9.9) {\color{purple}\small 1980, 1977}; 
            \node at (5.4,-10.3) {\color{purple}\small Whitney forms 1957}; 
                        \node at (5.4,-10.7) {\color{purple} \small Bossavit, Hiptmair}; 
                        \node at (5.4, -11.1) {\color{purple} \small  Arnold-Falk-Winther};
  \draw[line width=2pt, red!30, rounded corners=5pt,dotted] (-7.8,-1.4) -- (-7.8,-11) -- (3.8,-11) -- (3.8, -9.7) -- (-6.6, -1.4) -- cycle;
  \draw[line width=2pt, blue!30, rounded corners=5pt, dotted] (-7.8,1.2) -- (3.8,1.2) -- (3.8,-8.4) -- (2.6, -8.4) -- (-7.8, -0.1) -- cycle;
\end{tikzpicture}

\caption{An extended periodic table of finite elements for form-valued forms in 3D \cite{hu2023distributional,hu2025finite}. The first row is the distributional de Rham complex (dual Whitney forms) \cite{braess2008equilibrated,licht2017complexes}; the last row consists of Whitney forms; the elasticity complex is discretised by the Christiansen--Regge complex \cite{christiansen2011linearization}; the Hessian and divdiv complexes are due to \cite{hu2023distributional}.
}
 \label{fig:table}
\end{figure}

\begin{figure}
\begin{minipage}{0.45\textwidth}
 \begin{center}
	   \includegraphics[width=0.29\textwidth]{./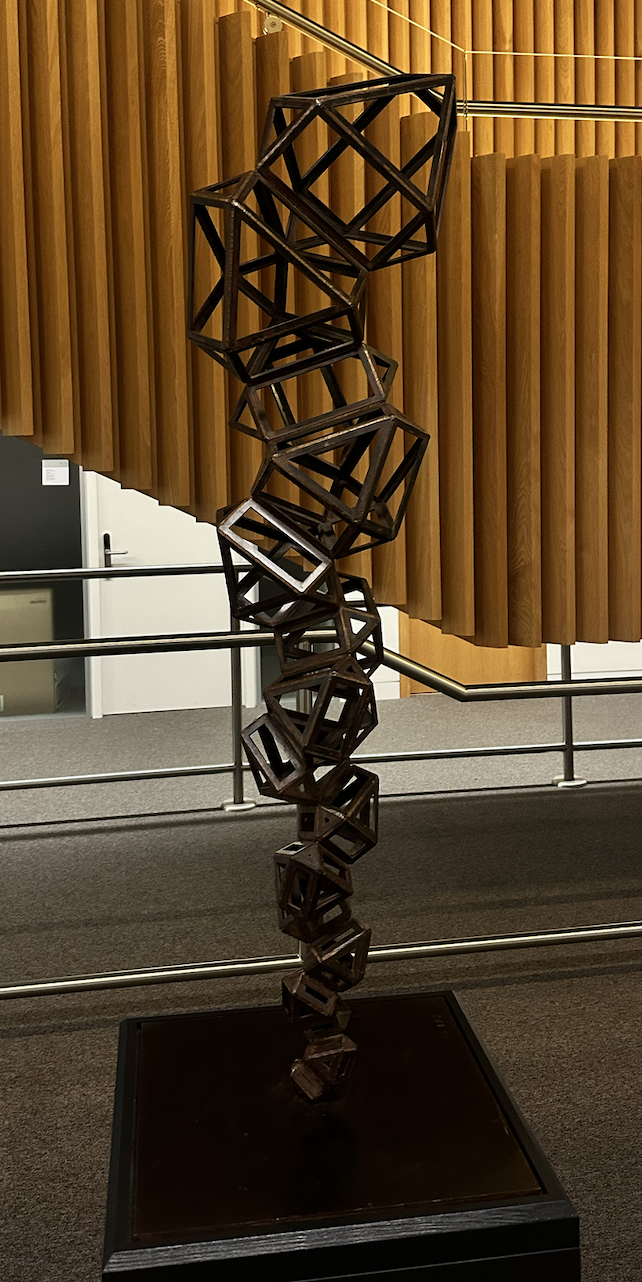}\quad
\end{center}
 \begin{center}
  {Discrete Differential Geometry (DDG): frame (edges lengths, angle etc.)\footnotetext{{Picture: Exhibition: Cascading Principles: Expansions within Geometry, Philosophy, and Interference (https://www.maths.ox.ac.uk/node/61184), Author:  Conrad Shawcross. }}}
  \end{center}
\end{minipage}
\begin{minipage}{0.45\textwidth}
 \begin{center}
	   \includegraphics[width=1.17\textwidth]{./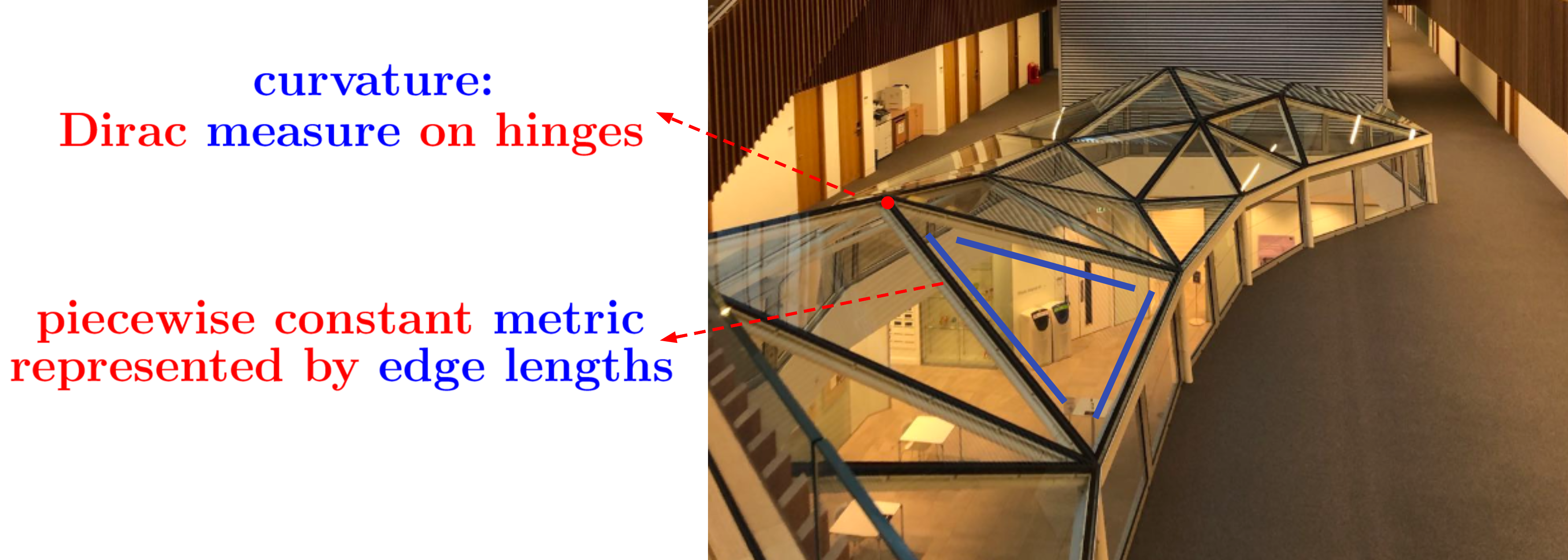}\quad
	     {Finite elements: piecewise functions (measure)}
\end{center}
\end{minipage}
\caption{Finite elements extend discrete definitions (such as using edge lengths to represent a metric) to piecewise defined functions or fields. This shift of point of view facilitates a rigorous mathematical and numerical analysis.}
\end{figure}

\bibliographystyle{siam}      
\bibliography{reference}{}   

\end{document}